\numberwithin{equation}{section}
\theoremstyle{plain}
\newtheorem{theorem}{Theorem}[section]
\newtheorem{statement}{Statement}
\newtheorem{lemma}{Lemma}
\newtheorem{remark}{Remark}
\newtheorem{proposition}{Proposition}
\newtheorem{corollary}{Corollary}
\newcommand{\sqrtr}{\mathbin{\text{\rotatebox[origin=c]{90}{$C/\sqrt{n} \,\,$}}}}
\newcommand{\srtr}{\mathbin{\text{\rotatebox[origin=c]{90}{\text{ L. \ref{thm:gCompAppl}}}}}}
\newcommand{\approxtest}{
	\mathbin{\text{\rotatebox[origin=c]{-90}{$\underset{\sqrtr}{\overset{\srtr}{\approx}}$}}}}
\renewcommand{\(}{$\,}
\renewcommand{\)}{\,$}
\def\eqdef{\stackrel{\operatorname{def}}{=}}
\def\Lt{\text{L}(t)}
\def\Rt{\text{R}(t)}
\newcommand{\bb}[1]{\boldsymbol{#1}}
\renewcommand{\hat}[1]{\widehat{#1}}
\renewcommand{\tilde}[1]{\widetilde{#1}}
\renewcommand{\Gamma}{\varGamma}
\renewcommand{\Pi}{\varPi}
\renewcommand{\Sigma}{\varSigma}
\renewcommand{\Delta}{\varDelta}
\renewcommand{\Lambda}{\varLambda}
\renewcommand{\Psi}{\varPsi}
\renewcommand{\Phi}{\varPhi}
\renewcommand{\Theta}{\varTheta}
\renewcommand{\Omega}{\varOmega}
\renewcommand{\Xi}{\varXi}
\renewcommand{\Upsilon}{\varUpsilon}
\def\Var{\operatorname{Var}}
\def\argmin{\operatornamewithlimits{argmin}}
\def\tr{\operatorname{tr}}
\def\I{I\!\!I}
\def\R{I\!\!R}
\def\E{I\!\!E}
\def\P{I\!\!P}
\def\kappa{\varkappa}
\def\T{\top}
\def\diag{\operatorname{diag}}
\def\Qv{\mathbb{Q}}
\def\CONST{\mathtt{C}}
\def\mub{\overline{\mu}}
\def\b{\flat}
\definecolor{blue(pigment)}{rgb}{0.2, 0.2, 0.6}
\definecolor{ultramarine}{rgb}{0.07, 0.04, 0.56}
\definecolor{darkspringgreen}{rgb}{0.09, 0.45, 0.27}
\definecolor{hookersgreen}{rgb}{0.0, 0.44, 0.0}
\definecolor{plum(traditional)}{rgb}{0.56, 0.27, 0.52}
\definecolor{purple(html/css)}{rgb}{0.5, 0.0, 0.5}
\definecolor{magenta(dye)}{rgb}{0.79, 0.08, 0.48}
\def\CONST{\mathtt{C} \hspace{0.1em}}
\def\sbt{\hspace{1pt} \flat}
\def\ups{\bb{\upsilon}}
\def\xx{\mathtt{x}}
\def\zz{\mathfrak{z}}
\begin{document}

\begin{frontmatter}
\title{Construction of Non-asymptotic Confidence Sets in \(2\)-Wasserstein Space}
\runtitle{Construction of confidence sets in \(2\)-W space}

\begin{aug}
	
\author{\fnms{Johannes} \snm{Ebert}
	\ead[label=e3]{johannes.ebert@hotmail.com}}

\address{Department of Economics\\
	 Humboldt University of Berlin\\
	Unter den Linden 6, 10099 Berlin, Germany\\
	\printead{e3}}	
	
\author{\fnms{Vladimir} \snm{Spokoiny}\ead[label=e1]{spokoiny@wias-berlin.de}}
\and
\author{\fnms{Alexandra} \snm{Suvorikova}\ead[label=e2]{suvorikova1@wias-berlin.de}}

\address{Weierstrass Institute for Applied Analysis and Stochastics\\
Mohrenstr. 39 10117 Berlin Germany\\
\printead{e1,e2}}

\runauthor{J. Ebert et al.}

\affiliation{Department of Economics, Humboldt University of Berlin}

\end{aug}

\begin{abstract}
In this paper, we consider a probabilistic setting where the probability measures are considered to be random objects. 
We propose a procedure of construction 
non-asymptotic confidence sets for empirical barycenters in \(2\)-Wasserstein space
and develop the idea further to construction of a non-parametric two-sample test
that is then applied to the detection of structural breaks in data with complex geometry. 
Both procedures mainly rely on the idea of multiplier bootstrap (\citet{spokoiny_zhilova_2015}, \citet{chernozhukov_2014}).
The main focus lies on probability measures that have commuting covariance matrices 
and belong to the same scatter-location family: 
we proof the validity of a bootstrap procedure that allows to compute confidence sets and critical values 
for a Wasserstein-based two-sample test.
\end{abstract}

\begin{keyword}[class=MSC]
\kwd[Primary ]{62G10}
\kwd{62G09}
\kwd[; secondary ]{62G15}
\end{keyword}

\begin{keyword}
\kwd{Wasserstein barycenters} 
\kwd{hypothesis testing}
\kwd{multiplier bootstrap}
\kwd{change point detection}
\kwd{confidence sets}
\end{keyword}
\tableofcontents
\end{frontmatter}

\section{Introduction}
\label{seq:intro}
Many applications in modern statistics go beyond the scope of classic setting and 
deal with data which lie on certain manifolds: for instance, statistics on shape space, 
computer vision, medical image analysis, bioinformatics and so on. These problems have
a common feature, namely they are closely related to the detection of \textit{patterns}. 
Pattern is a very general concept that describes  some (unknown and hidden) structure in the data, which has to be revealed.
For instance, the problem of classification of neuro-cognitive states of mind is associated with
detection of brain activity patterns in fMRI~\citet{norman2006beyond}. Another example comes from
bioinformatics, namely from computational epigenetic, that aims to detect common patterns
in gene expression regulation~\citet{jaenisch2003epigenetic, bock2008computational}.
The latter one is supposed to be one of the crucial aspects of morphogenesis. 
Pattern can also be interpreted in a more specific way as a "typical" geometric shape inherent to all observed items.
Following the work by~\citet{kendall1977diffusion} we define shape as
whatever remains after proper normalization of the object (i.d. rotations, dilations, and shifts are factored out).
For example, the work~\citet{liu2010protein} estimates "typical" spatial configuration of protein backbones.
Basically, this setting appears in problems where the data is subjected to deformations through 
a random warping procedure. Such problems are also common for image analysis~\citet{amit_structural_1991, trouve_metamorphoses_2005} 
and shape analysis \citet{huckemann_intrinsic_2010}.

In what follows we consider the following probabilistic setting. Let \((X, \rho) \) be some general metric space
and \(\P \) a Borel measure on it. The straightforward generalisation of least-square estimator
leads to the concept of the Fr\'{e}chet mean~\citet{frechet1948elements}, that is the (set of) global minima of the variance
\[
x^* \subseteq \argmin_{x \in X}\int_{X}\rho^{2}(x, y)\P(dy),
\]
where \(x^* \) is referred to as the population Fr\'{e}chet mean that is not necessarily unique.
However, under certain settings it can be considered as the pattern induced by \(\P \).
Further we assume, that \(\P \) and \(X \) are such that \(x^* \) is unique. 
The issue is discussed in more details in Section~\ref{sec:barycenters}.
Given an iid sample \((y_1,..., y_n) \) s.t. \(y_i \sim \P \), one can build its empirical estimator
\[
{x}_{n} \eqdef \argmin_{x \in X}\frac{1}{n}\sum^n_{i = 1}\rho^2(x, y_i).
\]
There are several works~\citet{bhattacharya2003large, bhattacharya2005large, bhattacharya2008nonparametric} 
that present a detailed study of the asymptotic properties of the empirical Fr\'{e}chet mean in case \(X \) 
is a finite-dimensional differentiable manifold. The monograph~\cite{bhattacharya2008nonparametric} also 
describes the procedure of asymptotic confidence set construction for \(x_n \). 
In this work we consider a particular case of \((X, \rho) \), namely \(X =\mathcal{P}_2(\R^d)\) is the space of all probability measures
with finite second moment defined on \(\R^d \) and \(\rho =W_2 \) is \(2 \)-Wasserstein distance. 
To gain deeper knowledge in the structure of Wasserstein space and
the optimal transport theory we recommend two excellent books \citet{villani_optimal_2009, ambrosio2013user}.
Following the seminal paper~\citet{agueh2011barycenters}, 
from now on we refer to Fr\'{e}chet mean in Wasserstein space as the Wasserstein barycenter.
Thus, the population barycenter \(\mu^* \) and its empirical estimator \(\mu_n \), that is built using an observed iid sample
\(\{ \nu_1,..., \nu_n \} \) are defined as
\[
\mu^* \eqdef \argmin_{\mu \in \mathcal{P}_2(\R^d)}\int_{\mathcal{P}(\R^d)} W^2_2(\mu, \nu)\P(d\nu), \quad 
\mu_n \eqdef \argmin_{\mu \in \mathcal{P}_2(\R^d)}\frac{1}{n}\sum_{i = 1}^n W^2_2(\mu, \nu_i).
\]
Since its introduction, Wasserstein barycenter has become a popular tool in a variety of domains, 
including image processing \citet{solomon_convolutional_2015, rabin2011wasserstein}, 
and mathematical economics \citet{carlier_matching_2008}. 
\citet{bigot2012} provide a characterization of the population barycenter 
for various parametric classes of random transformations for probability measures with compact support. 
Recently, \citet{gouic_existence_2015} established the convergence of the empirical barycenter of an 
iid sample of random measures on a locally geodesic metric space towards its population barycenter.

A procedure, that allows to make statistical inference in Wasserstein space was introduced by \citet{loubesStatistical2015}. 
They consider a statistical deformation model and obtain the asymptotic distribution and a bootstrap procedure for the Wasserstein barycenter. 
They use the results to construct a goodness-of-fit test for the deformation model. 
However, their study is limited to probability measures on the real line. The similar setting is
discussed in~\citet{munk2015limit}. Authors study the subspace of Gaussian measures on \(\R^d \) and 
estimate Wasserstein distance \(W_{2}(\nu_1, \nu_2) \) between two Gaussians \(\nu_1 \) and \(\nu_2 \),
knowing its empirical counterparts \(\hat{\nu}_1\), \(\hat{\nu}_2 \). Empirical measures are estimated using iid samples 
\(X_1,..., X_n \sim \nu_1 \) and \(Y_1,..., Y_m \sim \nu_2 \), all \(X_i, Y_j \in \R^d \).

The present work sets out to generalize the results in \citet{loubesStatistical2015, munk2015limit} to the case
where random observed objects are measures on the space $\R^d$. 
Namely, we consider an iid sample \(\{\nu_1,..., \nu_n  \}\), \(\nu_i \sim \P \) and the following non-parametric test 
\[
T_n = \sqrt{n}W_2(\mu_n, \mu^*),
\]
and propose the procedure of construction of non-asymptotic data-driven confidence sets
\(\mathcal{C}\bigl(\zz^{\sbt}(\alpha) \bigr)\) around \( \mu_n\), s.t.
\[
\P \Bigl( \mu^* \not\in \mathcal{C}\bigl(\zz^{\sbt}(\alpha) \bigr) \Bigr) \leq \alpha.
\]
The procedure is based on the multiplier bootstrapping technique~\citet{spokoiny_zhilova_2015}, \citet{chernozhukov_2014}.

We use the same approach to construct a non-parametric two-sample test in \(2\)-Wasserstein space,
that is further applied to the problem of change point detection. 
The general statement is as follows: let \(\nu_t \) be an observed process in discrete time. 
The time moment \(t^* \) is supposed to be a change point if
the data stream in hand undergoes some abrupt structural break:
\[
\begin{cases}
\nu_t \sim \P_1,  & t < t^*,\\
\nu_t  \sim \P_2, & t \geq t^*
\end{cases}
\]
The goal is to detect the regime switch as soon as possible under given false-alarm rate.
We use a detection procedures that is based on a test in running window. 
 Let \( t \) be a candidate for a change point 
and let \( ({\nu}_{t - h},..., {\nu}_{t + h - 1}) \) be observed data in the rolling window of size \( 2h \).
Let
\[
\begin{cases}
\nu_i \sim \P_1, & i \in\{t - h,...,t - 1\}, \\ 
\nu_i \sim \P_2, & i \in\{t,...,t + h - 1\}. 
\end{cases} 
\]
Then the hypothesis of homogeneity \(H_0 \) and its alternative \(H_1 \) are written as
\[
H_0: \P_1 = \P_2, \qquad H_1: \P_1 \neq \P_2.
\]
As a test statistic we use
\[
T_h(t) \eqdef \sqrt{h}W_2 \bigl(  \mu_{l}(t), \mu_{r}(t)  \bigr),
\]
where \( \mu_l(t) \) and \( \mu_{r}(t) \) are empirical barycenters in the
left and right halves of a scrolling window:
\[
\mu_l(t) \eqdef \argmin_{\mu \in \mathcal{P}_2(\R^d)}\frac{1}{h}\sum_{i = t - h}^{h - 1} W^2_2(\mu, \nu_i), \quad
\mu_r(t) \eqdef \argmin_{\mu \in \mathcal{P}_2(\R^d)}\frac{1}{h}\sum_{i = t }^{t + h - 1} W^2_2(\mu, \nu_i).
\]
A change point is supposed to be detect at time moment \(t\) if \(T_h(t) \) exceeds some critical level:
\[
T_h(t) \geq \zz_h(\alpha, t).
\]
The crucial step of the method is the fully data-driven calibration of critical values \(\zz_h(\alpha, t) \),
that is also based on the idea of multiplier bootstrap.

As a starting point, we restrict the discussion to the case of scatter-location family of measures, 
for which an explicit representation of the Wasserstein distance exists~\citet{alvarez2011wide}. 
Section~\ref{section:confidence_sets} presents the procedure of construction of non-asymptotic confidence sets for
empirical Wasserstein barycenters. Section~\ref{section:change_point_wasserstein} studies its application to 
detection of structural breaks in data with complex geometry.
Theoretical justification is obtained for measures with commuting covariance matrices.
Both algorithms are tested under the most general setting on artificial and real data (MNIST database of hand written digits).
The results are presented in Section~\ref{section:experiments}. All proofs and conditions are collected in Appendix~\ref{chapter:proofs_barycenters}.

\section{Monge-Kantorovich distance for location-scatter family}
\label{sec:barycenters}
In this section we recall the basics on optimal transportation theory. Consider Euclidean space {\( (\R^d, \| \cdot \|_2) \)}
and denote as \(\mathcal{P}_2(\R^d) \) the set of all probability measures on \(\R^d \) with finite second moment. 
The space is endowed with \(2\)-Wasserstein distance, that is the solution of Monge-Kantorovich 
problem in a particular case of cost function \(c(x, y) = \| x - y\|^2 \). 
Namely, for any \( \mu\) and \(\nu \) in \(\mathcal{P}(\R^d) \) we define the distance as
\begin{equation}
\label{def:W2_distance}
W_2^2(\mu, \nu) \eqdef \inf_{\pi \in \Pi(\mu, \nu)} \int_{\R^d} \| x- y\|^2 d\pi(x,y),
\end{equation}
where \( \Pi(\mu, \nu) \) is the set of all joint probability measures \(\pi \in \mathcal{P}(\R^d \times \R^d)\) with marginals \( \mu \) and \(\nu \)
\[
\Pi(\mu, \nu) \eqdef \bigl\{ \pi \in \mathcal{P}(\R^d \times \R^d): 
~\text{for all}~ \mathcal{A} \in {\mathcal{B}(\R^d)} : \]
\[
\pi(\mathcal{A}\times\R^d) = \mu(\mathcal{A}), \pi(\R^d \times \mathcal{A}) = \nu(\mathcal{A}) \bigr\},
\]
where \(\mathcal{B}(\R^d) \) is Borel \(\sigma\)-algebra on \(\R^d \).
Given a sample \(\{\nu_1,..., \nu_n\} \) from \(\mathcal{P}_2(\R^2) \),
we define its empirical barycenter (see~\citet{agueh2011barycenters}) as 
\begin{equation}
\label{def:empirical_barycenter}
\mu_n \eqdef \argmin_{\mu \in \mathcal{P}_2(\R^d)} \frac{1}{n}\sum_{i = 1}^n W^2_2(\mu, \nu_i).
\end{equation}
An example is presented at Fig.~\ref{fig:barycenter}. 
The upper panel depicts an observed sample of \(n=4 \) normalized images of size \(100\times 100 \) pixels.
The left-hand lower box stands for Euclidean averaging of images, whereas the right-hand one
for \(2\)-Wasserstein barycenter, computed with Bregman projection
algorithm proposed in~\citet{benamou2015iterative}.

Application of the concept of weighted mean generalizes Wasserstein barycenter as follows: 
let the vector of weights \((w_1,..., w_n) \) be an element of a unit \(n\)-dimensional simplex, 
i.d. \(\sum_{i}w_i = 1 \) and \(w_i \geq 0 \) for all \(i = 1,..., n \). Then the weighted barycenter is 
\[
\mu_n \eqdef \argmin_{\mu \in \mathcal{P}_2(\R^2)} \sum_{i = 1}^n w_iW^2_2(\mu, \nu_i).
\]
Its existence, uniqueness and regularity are investigated in \citet{agueh2011barycenters}.
\begin{figure}[ht!]
	\centering
	\includegraphics[width=.75\textwidth]{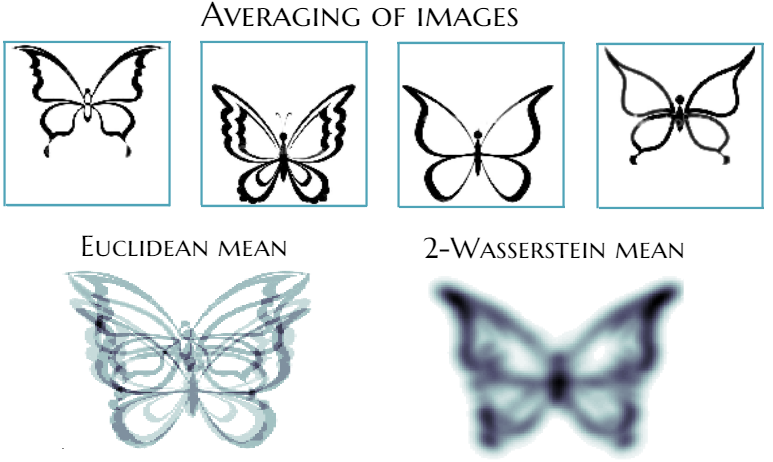}
	\caption{\(2\)-Wasserstein barycenter of 4 images} 
	\label{fig:barycenter}
\end{figure}

This work lays its focus on the case of a random sample \(\{\nu_1,..., \nu_n\} \), 
where all observations are independent and follow some unknown distribution \(\P \), 
i.d. \(\nu_i \sim \P \). A measure \(\P_n \) induced by this sample
\[
\P_n \eqdef \frac{1}{n}\sum^{n}_{i = 1}\delta_{\nu_i}
\]
can be considered as an empirical counterpart of \(\P \).

Furthermore, along with the empirical barycenter, one can define the \textit{population} one. Namely, this is 
a set of \(\{\mu^* \}\), that is
\begin{equation}
\label{def:population_barycenter_general}
\mu^* \subseteq \argmin_{ \mu \in \mathcal{P}_2(\R^d)} \int_{\mathcal{P}_2(\R^d)}W^2_2(\mu, \nu)d\P(\nu).
\end{equation}
Further we assume, that \(\P \) admits a unique barycenter. The next proposition ensures the fact.
\begin{proposition}[\citet{gouic_existence_2015}, Proposition 6]
Let \(\P \) be such that there exists a set \(A \subset \mathcal{P}_2(\R^d) \) of measures such that
for all \(\mu \in A\),
\[
B \in \mathcal{B}(\R^d), \quad \text{dim}(B) \leq d - 1 \longrightarrow \mu(B) = 0,
\]
and \( \P(A) > 0  \), then \(\P \) admits a unique barycenter.
\end{proposition}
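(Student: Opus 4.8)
The plan is to prove uniqueness of the minimiser of the barycenter functional
\[
F(\mu)\eqdef\int_{\mathcal{P}_2(\R^d)}W_2^2(\mu,\nu)\,d\P(\nu),
\]
existence of a minimiser being already at hand from the convergence/existence results quoted above. Call a measure \emph{regular} if it assigns zero mass to every Borel set of dimension at most $d-1$, so that the hypothesis says exactly that the set $A$ of regular measures has $\P(A)>0$; the task is to exclude two distinct minimisers.

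The optimal-transport input I would use is Brenier--McCann: if $\nu$ is regular then for every $\mu$ the optimal coupling of $\nu$ and $\mu$ is unique and induced by a map (the gradient of a convex function). From this comes the strict-gain estimate that drives everything. For regular $\nu$ and any $\mu_0\neq\mu_1$, write $R_i$ for the optimal map $\nu\to\mu_i$; then $\tfrac12(R_0+R_1)=\nabla\bigl(\tfrac12(\phi_0+\phi_1)\bigr)$ is again a convex gradient, so it is the optimal map from $\nu$ onto $\widetilde{\mu}\eqdef\bigl(\tfrac12(R_0+R_1)\bigr)_{\#}\nu$, whence a direct expansion gives
\[
W_2^2(\widetilde{\mu},\nu)=\tfrac12W_2^2(\mu_0,\nu)+\tfrac12W_2^2(\mu_1,\nu)-\tfrac14\int\|R_0-R_1\|^2\,d\nu,\qquad \int\|R_0-R_1\|^2\,d\nu\ge W_2^2(\mu_0,\mu_1)>0.
\]
So along this generalized geodesic (based at $\nu$) the squared distance to $\nu$ falls strictly below the chord; this, and only this, is where a strict gain can be harvested, and it is the regular measures in $A$ that provide it.

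I would then argue in two steps. Step one: every minimiser $\mu^{*}$ of $F$ is regular. This is where $\P(A)>0$ is consumed --- if $\mu^{*}$ charged a set $B$ of dimension $\le d-1$, then since for $\P$-a.e.\ $\nu\in A$ the optimal coupling of $\mu^{*}$ with $\nu$ is a map, the $\mu^{*}$-mass sitting on $B$ is transported in a way that admits a strictly cheaper competitor obtained by diffusing that mass onto a thin Lebesgue-positive set; choosing the diffusion small enough that $\int_{A^{c}}W_2^2(\cdot,\nu)\,d\P$ barely moves while $\int_{A}W_2^2(\cdot,\nu)\,d\P$ strictly drops contradicts minimality. (This is the statement that a Wasserstein barycenter of a finite family one of whose members is absolutely continuous is absolutely continuous --- as in Agueh--Carlier --- upgraded to $\P$ via the mass on $A$.) Step two: if $\mu_0\neq\mu_1$ are both minimisers, both are regular by step one, so the maps $R_\nu$ from the barycenter \emph{to} $\nu$ exist and optimality reads as the barycenter (Monge--Amp\`ere) equation $\int R_\nu\,d\P(\nu)=\mathrm{id}$ at $\mu_0$ and at $\mu_1$; combining these identities with the displayed strict estimate --- available for $\P$-a.e.\ $\nu\in A$ --- one aims, after integrating against $\P$, to produce a strictly negative surplus of order $\P(A)\,W_2^2(\mu_0,\mu_1)$ that the remaining, only weakly controlled, contribution over $A^{c}$ cannot absorb, contradicting $F(\mu_0)=F(\mu_1)=\min F$ and forcing $\mu_0=\mu_1$.

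I expect real resistance in exactly two places. In step two, $\mathcal{P}_2(\R^d)$ is positively curved, so $\mu\mapsto W_2^2(\mu,\nu)$ is \emph{not} convex --- not even midpoint-convex --- along the displacement geodesic joining $\mu_0$ and $\mu_1$, and the curves $\widetilde{\mu}=\widetilde{\mu}^{\,\nu}$ along which it \emph{is} strictly convex depend on the base point $\nu$; hence there is no single interpolating measure realising the strict gain against all $\nu$ simultaneously, and the per-$\nu$ information has to be transferred using heavily that the rival barycenters are regular and satisfy the barycenter equation --- coupling this curvature obstruction to the transport rigidity is the crux. In step one, making the perturbation quantitative, uniformly enough over $\nu\in A$ to dominate the $A^{c}$-error, is the most technical computation.
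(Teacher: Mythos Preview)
The paper does not prove this proposition; it is quoted verbatim from \citet{gouic_existence_2015} and used as a black box, so there is no in-paper proof to compare against.

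That said, your approach is substantially harder than the standard argument and, as you yourself diagnose, runs into a real curvature obstruction. The clean route --- the one in the cited source and in Agueh--Carlier --- does not use displacement or generalized geodesics at all; it uses \emph{linear} interpolation of measures. For any fixed $\nu$, the map $\mu\mapsto W_2^2(\mu,\nu)$ is convex along the straight line $t\mapsto(1-t)\mu_0+t\mu_1$, because if $\pi_i$ is optimal for $(\mu_i,\nu)$ then $(1-t)\pi_0+t\pi_1$ is an admissible plan for $\bigl((1-t)\mu_0+t\mu_1,\nu\bigr)$ with cost exactly $(1-t)W_2^2(\mu_0,\nu)+tW_2^2(\mu_1,\nu)$. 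Integrating against $\P$ gives convexity of $F$ along this \emph{single} curve for all $\nu$ simultaneously --- precisely the uniformity you could not obtain with base-point-dependent generalized geodesics. Strictness then comes from $\P(A)>0$: if equality held at $t=\tfrac12$ for some regular $\nu\in A$, the mixed plan $\tfrac12\pi_0+\tfrac12\pi_1$ would itself be optimal; but regularity of $\nu$ forces every optimal plan out of $\nu$ to be concentrated on the graph of a single Brenier map, so $\pi_0$ and $\pi_1$ live on the same graph, hence $\mu_0=(\pi^1)_\#\pi_0=(\pi^1)_\#\pi_1=\mu_1$.

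Your Step~1 (every barycenter is regular) is a true and interesting statement, but it is not needed for uniqueness, and your sketch of it has a gap: ``diffusing mass onto a thin Lebesgue-positive set'' does not obviously lower the transport cost to the regular measures in $A$ --- you would need a quantitative displacement-convexity estimate there, which is harder than the uniqueness proof itself. Your Step~2 correctly pinpoints why generalized geodesics fail, but the repair via the barycenter equation would still leave the $A^c$ contribution uncontrolled along a curve that is not a generalized geodesic for those $\nu$. The linear-interpolation argument sidesteps all of this in two lines.
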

The paper~\citet{gouic_existence_2015} shows, 
that {under accepted setting} \(\mu_n \) is a consistent estimator of \(\mu^* \).

\begin{proposition}[\citet{gouic_existence_2015}, Corollary 5]
	\label{proposition:gouic}
	Suppose that \(\P \in \mathcal{P} \bigl(  \mathcal{P}_2(\R^d)  \bigr) \) has a unique barycenter.
	Then for any sequence \(\bigl( \P_n \bigr)_{n \geq 1} \subset \mathcal{P} \bigl(  \mathcal{P}_2(\R^d)  \bigr) \)
	converging to \(\P \): \(W_2(\P, \P_n) \rightarrow 0\) as \(n \rightarrow \infty \),
	any sequence \(\mu_n \) of their barycenters converges to the barycenter of \(\P \):
	\[
	W_2(\mu^*, \mu_n) \rightarrow 0, ~~ n \rightarrow \infty.
	\]
\end{proposition}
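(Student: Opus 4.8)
The plan is to treat this as a stability statement for minimizers of the Fréchet functional and to argue by compactness. Write $F(\mu)\eqdef\int W_2^2(\mu,\nu)\,d\P(\nu)$ and $F_n(\mu)\eqdef\int W_2^2(\mu,\nu)\,d\P_n(\nu)$, so that $\mu^*$ minimizes $F$ and each $\mu_n$ minimizes $F_n$. Since $\mu^*$ is the \emph{unique} minimizer of $F$, it suffices to show that every subsequence of $(\mu_n)$ has a further subsequence converging to $\mu^*$ in $W_2$; from here on I pass freely to subsequences without relabelling. First I would bound the second moments of the $\mu_n$ uniformly: with $\nu_0\eqdef\delta_0$, optimality of $\mu_n$ and the triangle inequality give
\[
W_2^2(\mu_n,\nu_0)\le 2F_n(\mu_n)+2\int W_2^2(\nu,\nu_0)\,d\P_n(\nu)\le 4\int W_2^2(\nu,\nu_0)\,d\P_n(\nu),
\]
using $F_n(\mu_n)\le F_n(\nu_0)$ and symmetry of $W_2$. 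The right-hand side converges as $n\to\infty$, since $W_2(\P,\P_n)\to 0$ forces $\int g\,d\P_n\to\int g\,d\P$ for every $g$ that is continuous with at most quadratic growth, in particular for $g(\nu)=W_2^2(\nu,\nu_0)$. Hence $\sup_n W_2(\mu_n,\nu_0)<\infty$, so $(\mu_n)$ is tight and, along the subsequence, converges weakly to some $\bar\mu\in\mathcal{P}_2(\R^d)$.

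Next I would identify $\bar\mu=\mu^*$ via a $\Gamma$-liminf inequality. Using a Skorokhod coupling I realize random measures $N_n\sim\P_n$ and $N\sim\P$ on one probability space with $W_2(N_n,N)\to 0$ almost surely; because $W_2(\P,\P_n)\to 0$ the variables $W_2^2(N_n,\nu_0)$ are also uniformly integrable, whence $\E\,W_2^2(N_n,N)\to 0$. Combining $|W_2(\mu_n,N_n)-W_2(\mu_n,N)|\le W_2(N_n,N)$, the uniform $L^2$-bound on $W_2(\mu_n,N)$, Fatou's lemma, and the weak lower semicontinuity of $\mu\mapsto W_2^2(\mu,N)$, I obtain $F(\bar\mu)=\E\,W_2^2(\bar\mu,N)\le\liminf_n\E\,W_2^2(\mu_n,N_n)=\liminf_n F_n(\mu_n)$. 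On the other hand $W_2^2(\mu^*,N_n)\to W_2^2(\mu^*,N)$ almost surely and is uniformly integrable, so $F_n(\mu^*)\to F(\mu^*)$. Feeding in $F_n(\mu_n)\le F_n(\mu^*)$,
\[
F(\bar\mu)\le\liminf_n F_n(\mu_n)\le\lim_n F_n(\mu^*)=F(\mu^*)=\min F,
\]
so $\bar\mu$ minimizes $F$, and by uniqueness $\bar\mu=\mu^*$. The same chain forces $F_n(\mu_n)\to F(\mu^*)$.

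The last step is to upgrade the weak convergence $\mu_n\rightharpoonup\mu^*$ to $W_2(\mu_n,\mu^*)\to 0$, i.e.\ to prove $\int\|x\|^2\,d\mu_n(x)\to\int\|x\|^2\,d\mu^*(x)$, of which (given weak convergence) only the inequality ``$\ge$'' is nontrivial: one must rule out escape of squared mass to infinity. Here I would use that $W_2(\P,\P_n)\to 0$ implies $W_2$-convergence in $\mathcal{P}_2(\R^d)$ of the averaged marginals $\Lambda_n(\cdot)\eqdef\int\nu(\cdot)\,d\P_n(\nu)$ towards $\Lambda_\infty(\cdot)\eqdef\int\nu(\cdot)\,d\P(\nu)$ (weak convergence by testing against bounded continuous functions; convergence of second moments by the quadratic-growth remark), whence $\sup_n\int_{\|y\|>R}\|y\|^2\,d\Lambda_n(y)\to 0$ as $R\to\infty$ — the $N_n$ have uniformly (in expectation) negligible tails. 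If a positive amount $\delta$ of squared mass of $\mu_n$ escaped to infinity, then any plan realizing $W_2(\mu_n,N_n)$ would have to transport that mass over a distance tending to infinity while $N_n$ stays concentrated, contributing, in the limit, an excess cost bounded below by a fixed multiple of $\delta$; this contradicts $F_n(\mu_n)\to F(\mu^*)$ and the optimality of $\mu^*$ for $F$. Hence no such escape occurs, $\mu_n\to\mu^*$ in $W_2$ along the subsequence, and the proof is complete. I expect this last point to be the main obstacle: the Fréchet functional is only lower semicontinuous, not continuous, for the weak topology, so the weak subsequential limit must be pinned to the $W_2$-limit by a separate second-moment tightness argument, whereas the compactness and $\Gamma$-liminf steps are comparatively soft.
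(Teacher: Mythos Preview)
The paper does not prove this proposition at all: it is quoted verbatim as Corollary~5 of Le Gouic and Loubes and used as a black box, so there is no ``paper's own proof'' to compare against. Your compactness\,/\,$\Gamma$-convergence strategy is exactly the standard route (and the one Le Gouic--Loubes themselves take): bound second moments of $\mu_n$ via optimality, extract a weak limit, identify it with $\mu^*$ through a liminf inequality and the recovery sequence $\mu^*$, then upgrade to $W_2$-convergence. Steps~1 and~2 are correct as written; in particular the Skorokhod coupling plus the $L^2$-Cauchy--Schwarz estimate
\[
\bigl|\E W_2^2(\mu_n,N_n)-\E W_2^2(\mu_n,N)\bigr|\le \bigl(\E W_2^2(N_n,N)\bigr)^{1/2}\cdot\bigl(\E(W_2(\mu_n,N_n)+W_2(\mu_n,N))^2\bigr)^{1/2}\to 0
\]
is the clean way to pass from $F_n$ to $F$.

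Your Step~3 is, as you say, the real work, and the escape-of-mass paragraph is presently only a heuristic. A way to make it rigorous without the $\Lambda_n$ detour: from Step~2 you have both $W_2^2(\mu^*,N)\le\liminf_n W_2^2(\mu_n,N)$ a.s.\ and $\E W_2^2(\mu_n,N)\to\E W_2^2(\mu^*,N)$; a Scheff\'e-type argument then forces $W_2^2(\mu_n,N)\to W_2^2(\mu^*,N)$ in $L^1$, hence a.s.\ along a further subsequence for some fixed realization $\nu_\star$ of $N$. Now let $\gamma_n$ be optimal for $(\mu_n,\nu_\star)$; tightness gives $\gamma_n\rightharpoonup\gamma$ with marginals $(\mu^*,\nu_\star)$, and since $\int|x-y|^2\,d\gamma_n\to W_2^2(\mu^*,\nu_\star)\le\int|x-y|^2\,d\gamma$, the plan $\gamma$ is optimal and $|x-y|^2$ is uniformly integrable under $\gamma_n$. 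Because the second marginal is the fixed $\nu_\star$, $|y|^2$ is also UI, hence so is $|x|^2\le 2|x-y|^2+2|y|^2$, giving $\int|x|^2\,d\mu_n\to\int|x|^2\,d\mu^*$ and therefore $W_2(\mu_n,\mu^*)\to 0$. This replaces the informal ``mass must be transported to infinity'' sentence with a two-line uniform-integrability argument.
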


Now we present the concept of location-scatter family. 
Let \( \mathcal{P}^{ac}_2(\R^d)\) be the set of all absolutely continuous probability measures on \(\R^d\)
and fix some template measure 
\( \mu_0 \in \mathcal{P}^{ac}_2(\R^d)\).
Consider a random variable \(X \) induced by the law \(\mu_0 \): \( X \sim \mu_0 \). 
A location-scatter family generated from \(\mu_0 \)
is a set of distributions that are generated by all possible positive definite affine transformations of \(X\).
\begin{description}
\item[ \( (\mathcal{F}) \)] 
\emph{	Let \(\mu_0 \in \mathcal{P}^{ac}_{2}(\R^d) \) be a template object, s.t. \(X \sim \mu_0 \), \
	\(\E X = m_0 \) and \(\Var(X) = \Qv_0 \). Let the family of transformations be}
	\[
	\mathcal{F}(\mu_0) = \bigl\{AX + a, \quad A \in  \mathbb{S}_{+}(d, \R),\quad a\in \R^d \bigr\},
	\]
\emph{ where \(\mathbb{S}_{+}(d, \R) \) is the set of all positive definite symmetric matrices of size \(d\times d \)
	with real entries.}
\end{description}
 For example, the class of all \(d\)-dimensional Gaussians can be considered as
 a class of all affine transformations of standard normal distribution:
\begin{equation}
\mathcal{F}\bigl(\mathcal{N}(0, I_{d})\bigr) = \bigl\{ \mathcal{N}(m, S): \quad m \in \R^d, \quad S = A^2  \bigr\}.
\end{equation}
Furthermore, a nice fact about \(2\)-Wasserstein distance between 
any two measures from the class \(\mathcal{F}\bigl(\mathcal{N}(0, I_{d})\bigr)\)
is that, it is completely defined by their first and second moments (see e.g. \citet{gelbrich_formula_1990, olkin_pukelsheim_1982}). 
Let \(\mu = \mathcal{N}(m_1, S_1) \) and \(\nu = \mathcal{N}(m_2, S_2) \).
Then the minimum in~\eqref{def:W2_distance} turns into
\begin{equation}
\label{def:wdist_gaussian}
W_2^2(\mu, \nu) = \|m_1 - m_2\|^2 + \tr\Bigl( S_1 + S_2 -2\bigl(S_1^{1/2}S_2S_1^{1/2} \bigr)^{1/2}\Bigr).
\end{equation}
Note, that simple calculations presented
in Statement~\ref{statement:W2_distance_gauss} show, that it can be rewritten as
\begin{equation*}
W_2^2(\mu, \nu) = \|m_1 -m_2 \|^2 +\bigl \| \bigl(A(S_1, S_2) - I \bigr)S^{1/2}_1 \bigr\|^2_{\text{F}}, 
\end{equation*}
where \( \|\cdot \|_{F} \) stands for Frobenius norm and 
\(A(S_1, S_2) \) is a symmetric positive-definite matrix, that is associated with the 
optimal linear map from \(\mathcal{N}(0, S_1) \) to \(\mathcal{N}(0, S_2) \)
\[
A(S_1, S_2) \eqdef S^{-1/2}_1\bigl(S^{1/2}_1S_2S^{1/2}_1 \bigr)^{1/2}S^{-1/2}_1.
\]
The paper \citet{alvarez2011wide} expands the result by~\citet{gelbrich_formula_1990} and shows 
that \(2\)-Wasserstein distance between any two measures from the same scatter-location family 
has the same form as~\eqref{def:wdist_gaussian}. 
Furthermore, it also generalizes the result by~\citet{agueh2011barycenters}, 
which claims that an empirical barycenter of any set of Gaussians is Gaussian as well. 
In particular, let \(\nu_1 = \mathcal{N}(m_1, S_1),..., \nu_n = \mathcal{N}(m_n, S_n) \) 
then the barycenter \(\mu_n \) is Gaussian with parameters \(\mathcal{N}(r_n, \Qv_n) \), where
\[
r_n = \frac{1}{n}\sum_{i = 1}^n m_i, \quad \Qv_n = \frac{1}{n}\sum_{i = 1}^n \bigl(\Qv^{1/2}_n S_i \Qv^{1/2}_n  \bigr)^{1/2}.
\]
The next proposition shows, that any scatter-location family is closed for barycenters as well.
\begin{proposition}[\citet{alvarez2011wide}, Theorem 3.11]
	\label{proposition:alvarez}
Let \( \mu_0 \in \mathcal{P}^{ac}_2(\R^d)  \) and \(\P \in \mathcal{P} \bigl(  \mathcal{P}_2(\R^d)  \bigr) \). 
Furthermore, we assume that \(\text{supp}(\P) = \mathcal{F}(\mu_0) \). 
In other words each observation \(\nu_{\omega} \sim \P \) and \(\nu_{\omega} \in \mathcal{F}(\mu_0) \), 
with first and second moments \( (m_{\omega}, S_{\omega}) \)  respectively. 
Then \(\mu^*  \) defined in~\eqref{def:population_barycenter_general} is 
the unique barycenter of \(\P \)
characterized by first and second moments that are defined as
\[
r^* = \int_{\R^d}m_{\omega}d\P(\omega),
\]
\[
\mathbb{Q}^* = \int_{S_{+}(d, \R)}\Bigl( \mathbb{Q}^{*1/2} S_{\omega} \mathbb{Q}^{*1/2}  \Bigr)^{1/2}d\P(\omega).
\]
Moreover, \(\mu^* \in \mathcal{F}(\mu_0) \).
\end{proposition}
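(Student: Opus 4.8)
We reconstruct the argument of~\citet{alvarez2011wide}, Theorem~3.11. Write $(m_\mu,S_\mu)$ for the mean and covariance of $\mu\in\mathcal{P}_2(\R^d)$. Since $\mu_0$ is absolutely continuous we have $\Qv_0\succ 0$, and every $\nu=AX+a\in\mathcal{F}(\mu_0)$ is absolutely continuous with moments $(Am_0+a,\ A\Qv_0A)$; as $S\mapsto A\Qv_0A$ is a bijection of $\mathbb{S}_{+}(d,\R)$, the family $\mathcal{F}(\mu_0)$ is parametrised bijectively by $(m,S)\in\R^d\times\mathbb{S}_{+}(d,\R)$. Because $\P$ is supported on $\mathcal{F}(\mu_0)$ and every such member charges no set of dimension $\le d-1$, the criterion of~\citet{gouic_existence_2015} (Proposition~6) applies: $\P$ admits a unique barycenter $\mu^{*}$, which in particular exists.

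The plan is to reduce the minimisation over all of $\mathcal{P}_2(\R^d)$ to a finite-dimensional problem in $(m,S)$. First I would use the Gelbrich lower bound~\citep{gelbrich_formula_1990},
\[
W_2^2(\mu,\nu)\ \ge\ \|m_\mu-m_\nu\|^2+\tr\!\bigl(S_\mu+S_\nu-2(S_\mu^{1/2}S_\nu S_\mu^{1/2})^{1/2}\bigr),
\]
valid for arbitrary $\mu,\nu\in\mathcal{P}_2(\R^d)$, together with the fact (\citet{alvarez2011wide}, already recorded in~\eqref{def:wdist_gaussian}) that this bound is an equality when $\mu$ and $\nu$ lie in the same location-scatter family. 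Hence replacing $\mu$ by the element $\hat\mu\in\mathcal{F}(\mu_0)$ with the same first two moments does not increase $\int W_2^2(\mu,\nu_\omega)\,d\P(\omega)$, so $\inf_{\mu\in\mathcal{P}_2(\R^d)}\int W_2^2(\mu,\nu)\,d\P(\nu)$ equals the infimum over $(m,S)$ of
\[
\Phi(m,S)\ \eqdef\ \int\!\Bigl[\|m-m_\omega\|^2+\tr\bigl(S+S_\omega-2(S^{1/2}S_\omega S^{1/2})^{1/2}\bigr)\Bigr]d\P(\omega).
\]
The term in $m$ is minimised at $r^{*}=\int m_\omega\,d\P(\omega)$. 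For the term in $S$ one minimises $G(S)\eqdef\tr(S)-2\int\tr\bigl((S^{1/2}S_\omega S^{1/2})^{1/2}\bigr)d\P(\omega)$, which is continuous and coercive on the closed positive semidefinite cone, so a minimiser exists; perturbing a rank-deficient candidate in the direction of the projection onto its kernel strictly decreases $G$, so the minimiser is positive definite. Using the classical identity $\nabla_S\tr\bigl((S^{1/2}S_\omega S^{1/2})^{1/2}\bigr)=\tfrac12A(S,S_\omega)$ with $A(S,S_\omega)=S^{-1/2}(S^{1/2}S_\omega S^{1/2})^{1/2}S^{-1/2}$, the first-order condition becomes $\int A(S,S_\omega)\,d\P(\omega)=I$; pre- and post-multiplying by $S^{1/2}$ turns it into the fixed-point equation $S=\int(S^{1/2}S_\omega S^{1/2})^{1/2}\,d\P(\omega)$, and I denote a solution by $\mathbb{Q}^{*}$.

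Finally I would identify $\mu^{*}$. The member $\hat\mu^{*}\in\mathcal{F}(\mu_0)$ with moments $(r^{*},\mathbb{Q}^{*})$ satisfies $\int W_2^2(\hat\mu^{*},\nu_\omega)\,d\P(\omega)=\Phi(r^{*},\mathbb{Q}^{*})=\inf_{\mu\in\mathcal{P}_2(\R^d)}\int W_2^2(\mu,\nu)\,d\P(\nu)$, so it is a barycenter of $\P$; by uniqueness it equals $\mu^{*}$, which therefore lies in $\mathcal{F}(\mu_0)$ and has the stated moments. Applying the same uniqueness to the $\mathcal{F}(\mu_0)$-barycenters produced by any two minimisers of $\Phi$ shows $\Phi$ has a single minimiser, so $r^{*}$ and $\mathbb{Q}^{*}$ are well defined. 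I expect the two delicate points to be the equality case of the Gelbrich bound on $\mathcal{F}(\mu_0)$ — which is exactly where the hypothesis $\supp(\P)=\mathcal{F}(\mu_0)$ enters — and the verification of the derivative formula for $\tr\bigl((S^{1/2}S_\omega S^{1/2})^{1/2}\bigr)$ together with the interiority of the minimiser of $G$.
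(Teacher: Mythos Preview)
The paper does not prove this proposition at all: it is stated as a quotation of Theorem~3.11 in \citet{alvarez2011wide} and used as a black box, so there is no ``paper's own proof'' to compare against. Your reconstruction is a faithful outline of the original argument in \citet{alvarez2011wide}: the Gelbrich lower bound reduces the barycenter problem over $\mathcal{P}_2(\R^d)$ to the finite-dimensional functional $\Phi(m,S)$, the mean decouples and is minimised at the average, and the stationarity condition for the covariance part yields the fixed-point equation $S=\int (S^{1/2}S_\omega S^{1/2})^{1/2}\,d\P(\omega)$; uniqueness then pins down $\mu^*\in\mathcal{F}(\mu_0)$.

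One small caution on your write-up: the step ``perturbing a rank-deficient candidate in the direction of the projection onto its kernel strictly decreases $G$'' deserves a line of justification, since the map $S\mapsto\tr\bigl((S^{1/2}S_\omega S^{1/2})^{1/2}\bigr)$ is not differentiable at singular $S$ in the usual sense; in \citet{alvarez2011wide} this is handled by a direct inequality rather than a derivative argument. Also, the hypothesis you actually need is that $\P$-a.e.\ $\nu_\omega$ lies in $\mathcal{F}(\mu_0)$ (so that Gelbrich's bound is attained for every pair $(\hat\mu,\nu_\omega)$), which is weaker than the paper's somewhat loosely phrased $\supp(\P)=\mathcal{F}(\mu_0)$.
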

Further we consider the following data-generating scheme. 
Let \( \mu_0 \in \mathcal{P}^{ac}_2(\R^d) \) be a fixed template measure 
and \(X \sim \mu_0 \):
\begin{equation}
\label{def:admiss_transf}
\mathcal{F}(\mu_0) = \bigl\{\nu_{\omega},~\text{s.t}~ Y_{\omega}\sim \nu_{\omega} ~\text{and}~ Y_{\omega} = A_{\omega}X + a_{\omega}, (A_{\omega}, a_{\omega}) \sim \P \bigr\}
\end{equation}
where
\[
\E_{\omega}Y_{\omega} = X,~\text{i.d.}~\quad \E_{\omega}A_{\omega} = I_{d}, \quad \E_{\omega}a_{\omega} = 0.
\]
One can easily derive that \(\mu_0 \) coincides with \( \mu^* \) ({see Statement~\ref{statement:population_barycenter_equiv}}).
Taking into account all aforementioned, one can consider an empirical barycenter \(\mu_n\)~\eqref{def:empirical_barycenter}  
as a good candidate for the template estimator. 
\begin{remark}
	Unless otherwise noted, from now on we refer to \(\mu_0 \) as \(\mu^* \).
	It is implicitly assumed, that we always talk about a population barycenter \(\mu^* \),
	that coincides with the template object \(\mu_0 \) in case of location-scatter family.
\end{remark}

The next section provides the procedure of construction
confidence sets around \(\mu_n \). A possible application to change point detection
is presented in Section~\ref{section:change_point_wasserstein}.

\section{Bootstrap procedure for confidence sets}
\label{section:confidence_sets}
First we present a general description of the procedure. And then specify the results for a particular case 
of the location-scatter families.
Let \(\{\nu_1,..., \nu_n \} \) be observed iid random sample, that comes from
distribution \(\P \). Let \(\mu_n \) and \(\mu^* \) be empirical and population barycenters respectively.
We define the following statistic based on the \(2\)-Wasserstein distance between them
\[
\label{def:W_stat}
T_n \eqdef \sqrt{n} W_{2}(\mu_n, \mu^*).
\]

A confidence set for the population barycenter is defined as
\begin{equation}
\label{def:conf_sets_barycenters}
\mathcal{C}(\zz)\eqdef \bigl\{\mu \in {\mathcal{P}_{2}(\R^d)} ~|~  \sqrt{n} W_{2}(\mu_n, \mu) \leq \zz \bigr\}.
\end{equation}
For \(\alpha \in (0,1)\) define the quantile \(\zz_{n}(\alpha)\) as the minimum value that ensures\\ \(\P\bigl(\mu^* \not\in \mathcal{C}(\mathtt{\zz}) \bigr)\leq  \alpha\) :
\[
\zz_{n}(\alpha) \eqdef \inf \bigl \{\zz \geq 0 \,| \, \P \bigl(T_n \geq \zz \bigr) \leq \alpha \bigr \}.
\]
The quantile \( \zz_{n}(\alpha) \) depends on the underlying distribution \(\P\), which is generally unknown. 
We therefore propose a weighted bootstrap procedure for the estimation of the quantiles of the statistic \(T_n\).
The idea is to mimic the  distribution of \(T_n\) by considering a weighted version of the barycenter problem, 
reweighing its summands with random multipliers. 
This leads to the following bootstrap version of the empirical barycenter
\begin{equation}
\label{def:barycenter_boot}
\mu^{\b}_n \eqdef \argmin_{\mu \in \mathcal{P}(\R^d)} \sum_{i=1}^{n} W_2^2(\mu, \nu_i) w_i,
\end{equation}
where the $w_i$ are iid random variables fulfilled condition \( {(E\!D^{W}_{\b})} \) from Section~\ref{section:conditions_barycenters}. 
The distribution of \( \mu^{\b}_n \) is conditional on the data \( \{ \nu_1,..., \nu_n\}\). 
In what follows, \(\P^{\b}\) denotes the distribution of the weights $w_i$ 
given the sample \( \{ \nu_1,..., \nu_n\}\). The counterpart of $T_n$ in the bootstrap world is defined as 
\begin{equation}
\label{def:test_statistic_boot}
T_n^{\b} \eqdef \sqrt{n}W_2(\mu^{ \b}_n, \mu_n).
\end{equation}
\begin{remark}[Choice of bootstrap weights]
	\label{remark:weights}
	Note, that if the weights are non-negative, e.g. \(w_i \sim \text{Po}(1) \) or \(w_i \sim \text{Exp}(1) \), 
	the bootstrapped barycenter \(\mu^{ \b}_n \) is unique and belongs to the scatter-location family \(\mathcal{F}(\mu_0) \). 
	Otherwise, e.g. if weights are normal \(w_i \sim \mathcal{N}(1,1) \), {the existence of the solution of} \eqref{def:barycenter_boot} 
	should be proven. In what follows we show, that at least in some cases the framework admissible
	for non-negative weights can be applied to negative ones as well.
\end{remark}
The bootstrap counterpart of the quantile \(\zz_{n}(\alpha)\) is defined as
\begin{equation}
\label{def:boot_quantile_barycenters}
\zz^{\sbt}_{n}(\alpha) \eqdef \inf\bigl\{\zz \geq 0 \,| \, \P^{\sbt}(T^{\sbt}_n \geq \zz) \leq \alpha \bigr \}.
\end{equation}
Note that this quantity depends on the sample and is therefore random. The procedure is presented in Algorithm~\ref{alg:computation_z}

\subsection{The case of commuting matrices}
In this section we consider the case, when all observations
\( \{ \nu_1,..., \nu_n \} \) come from the same scatter-location family \(\mathcal{F}^{C}(\mu_0) \)
of measures, that have commuting covariance matrices. 
The case corresponds to the following data generation model.
Let \(\mu_0 \) be the template object with mean \(r_0  \) and covariance  \(\Qv_0\). Let
\( \Qv_0 = U \Lambda^2_{0} U^{\T} \) be the eigenvalue decomposition of \(\Qv_0 \). 
Then all \(A_{\omega} \)'s defined in \eqref{def:admiss_transf} should be of the form \(A_{\omega} = U \diag(\alpha_{\omega})^2 U^{\T} \),
with \(\alpha_{\omega} \in \R^d \).

Consider two measures \(\nu_1\) and \(\nu_2 \) that belong to \(\mathcal{F}^{C}(\mu_0) \). As usual, denote
their first and second moments as \( (m_1, S_1) \) and \( (m_2, S_2) \) correspondingly.
Let eigenvalue decomposition of \(S_1 \) and \(S_2 \) be
\[
S_1 = U \diag(\lambda_1)^2 U^{\T}, \qquad S_2 = U \diag(\lambda_2)^2 U^{\T}.
\]
Then Wasserstein distance \eqref{def:wdist_gaussian} converts into
\begin{equation}
\label{def:W2_distance_commute}
W^2_2(\nu_1, \nu_2) = \| m_1 - m_2 \|^2 + \|\lambda_1  - \lambda_2 \|^2.
\end{equation}
Furthermore, the barycenter \(\mu_n \) \eqref{def:population_barycenter_general} converts into a measure, that
is characterized by  first and second moments \( (r_n, U \diag(\lambda_n)^2 U^{\T} )  \), where
\[
(r_n, \lambda_n) = \argmin_{(m,~\lambda)} \frac{1}{n}\sum_{i = 1} \bigl(\|m - m_i \|^2 + \| \lambda - \lambda_i \|^2 \bigr) .
\]
Thus, its first and second moments \( (r_n, \Qv_n) \) are
\begin{equation}
\label{def:emp_b_commute}
r_n = \frac{1}{n}\sum^n_{i = 1}m_i, \quad \Qv_n = U \diag\Biggl( \frac{1}{n}\sum^n_{i = 1} \lambda_i \Biggr)^2 U^{\T}.
\end{equation}
By analogy, empirical barycenter in the bootstrap world \( \mu^{\b}_n\) \eqref{def:barycenter_boot} is characterized by
\begin{equation}
\label{def:emp_b_commute_boot}
r^{\sbt}_n = \frac{1}{\sum_{i = 1}^n w_i}\sum_{i = 1}^n m_iw_i, \quad  \Qv^{\sbt}_n = U \diag\Biggl( \frac{1}{\sum_{i = 1}^n w_i}\sum^n_{i = 1} \lambda_i w_i \Biggr)^2 U^{\T}.
\end{equation}
Then the test statistic \(T_n\)
\begin{equation}
\label{def:stat_conf_set}
T_n = \sqrt{n\Bigl\| \frac{1}{n}\sum_{i} m_i - r_0  \Bigr \|^2 + n \Bigl\|\frac{1}{n}\sum_{i} \lambda_i - \lambda_0  \Bigr\|^2},
\end{equation}
and its counterpart in the bootstrap world \(T^{\sbt}_n\)
\begin{equation}
\label{def:stat_conf_set_boot}
T^{\sbt}_n = \sqrt{n\Bigl\| \frac{1}{\sum w_i}\sum_{i} m_iw_i -  \frac{1}{n}\sum_{i} m_i \Bigr \|^2 + n \Bigl\|\frac{1}{\sum w_i}\sum_{i} \lambda_i w_i - \frac{1}{n}\sum_{i} \lambda_i  \Bigr\|^2}. 
\end{equation}

\begin{theorem}[Bootstrap validity for confidence sets]
	\label{theorem:bootstrap_validity_final_barycenters}
	Let observed data \( \{\nu_1,..., \nu_n\} \) be an iid sample from model \(\mathcal{F}^{C}(\mu_0) \).
	 Under conditions from Section~\ref{section:conditions_barycenters} it holds
	  with probability \(\P^{\sbt} \geq 1- (e^{-\xx} + 2e^{-6\xx^2})\), \(\P \geq 1 - 6e^{-\xx} \)
	\[
	\bigl| \P\bigl(  T_n \leq \zz^{\sbt}_{n}(\alpha) \bigr) - \alpha \bigr| \leq \Delta_{\text{total}}(n),
	\]
	where \(\zz^{\sbt}_{n}(\alpha) \) comes from~\eqref{def:boot_quantile_barycenters} and
	\[
	\Delta_{\text{total}}(n) \leq \CONST/\sqrt{n},
	\]
	with \(\CONST \) is some generic constant.  
\end{theorem}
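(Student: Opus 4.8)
The plan is to exploit the explicit formulae of Section~\ref{section:confidence_sets} to reduce the assertion, in the commuting case, to the classical problem of multiplier-bootstrap validity for the Euclidean norm of a normalised sum of i.i.d.\ vectors in the fixed dimension $2d$, and then to run the Gaussian-approximation / Gaussian-comparison / anti-concentration scheme of \citet{spokoiny_zhilova_2015, chernozhukov_2014}. First I would record that in the commuting model $\mu^{*}=\mu_{0}$ (the remark following Proposition~\ref{proposition:alvarez}, cf.\ Statement~\ref{statement:population_barycenter_equiv}) has moments $(r_{0},\lambda_{0})$ with $\E m_{i}=r_{0}$ and $\E\lambda_{i}=\lambda_{0}$; this follows from $\E_{\omega}A_{\omega}=I_{d}$, $\E_{\omega}a_{\omega}=0$ and the commuting specialisation of Proposition~\ref{proposition:alvarez}. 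Setting $\xi_{i}\eqdef(m_{i}-r_{0},\,\lambda_{i}-\lambda_{0})\in\R^{2d}$, formulae \eqref{def:emp_b_commute} and \eqref{def:stat_conf_set} give $T_{n}=\bigl\|n^{-1/2}\sum_{i=1}^{n}\xi_{i}\bigr\|_{2}$, while a one-line rearrangement of \eqref{def:emp_b_commute_boot} and \eqref{def:stat_conf_set_boot} gives
\begin{equation*}
T^{\sbt}_{n}=\frac{1}{\bar w}\,\Bigl\|\,n^{-1/2}\!\!\sum_{i=1}^{n}(w_{i}-\bar w)(\xi_{i}-\bar\xi)\Bigr\|_{2},\qquad \bar w=\tfrac1n\!\sum_i w_i,\quad \bar\xi=\tfrac1n\!\sum_i\xi_i .
\end{equation*}
On $\{\bar w\neq0\}$ the bootstrap barycenter is well defined through the closed form \eqref{def:emp_b_commute_boot}, which settles the existence issue of Remark~\ref{remark:weights} even for sign-changing weights. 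Thus everything reduces to comparing $z\mapsto\P^{\sbt}(T^{\sbt}_n\le z)$ with $z\mapsto\P(T_n\le z)$.

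Next I would chain three approximations. Write $\Sigma\eqdef\Var(\xi_{1})$, $\hat\Sigma\eqdef n^{-1}\sum_i(\xi_i-\bar\xi)(\xi_i-\bar\xi)^{\T}$ and $G\sim\mathcal N(0,\Sigma)$. (i)~A Bentkus-type Berry--Esseen bound over the Euclidean balls of $\R^{2d}$, under the moment condition of Section~\ref{section:conditions_barycenters}, gives $\sup_{z\ge0}|\P(T_n\le z)-\P(\|G\|\le z)|\le\CONST/\sqrt n$. (ii)~Its conditional analogue, applied to the array $(w_i-\bar w)(\xi_i-\bar\xi)$, gives on a $\P^{\sbt}$-high-probability event $\sup_{z\ge0}|\P^{\sbt}(T^{\sbt}_n\le z)-\P(\|\mathcal N(0,\hat\Sigma)\|\le z)|\le\CONST/\sqrt n$; here one absorbs the scalar $1/\bar w=1+O_{\P^{\sbt}}(n^{-1/2})$ and uses that the conditional covariance $n^{-1}\sum_i(w_i-\bar w)^2(\xi_i-\bar\xi)(\xi_i-\bar\xi)^{\T}$ concentrates around $\hat\Sigma$. (iii)~The Gaussian comparison inequality of Lemma~\ref{thm:gCompAppl} bounds $\sup_z|\P(\|\mathcal N(0,\hat\Sigma)\|\le z)-\P(\|G\|\le z)|$ by a constant (depending on $\lambda_{\min}(\Sigma)$) times $\|\hat\Sigma-\Sigma\|_{\oper}$. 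Finally, concentration of the sample covariance (matrix Bernstein) gives $\|\hat\Sigma-\Sigma\|_{\oper}\le\CONST\sqrt{\xx/n}$ on a set of $\P$-probability $\ge1-6e^{-\xx}$, and an analogous conditional estimate for the bootstrap covariance together with the deviation bound for $\bar w$ yields the $\P^{\sbt}$-probability $\ge1-(e^{-\xx}+2e^{-6\xx^{2}})$. Concatenating (i)--(iii) on these events produces $\sup_{z\ge0}|\P(T_n\le z)-\P^{\sbt}(T^{\sbt}_n\le z)|\le\CONST/\sqrt n$.

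To pass from distribution functions to the quantile $\zz^{\sbt}_{n}(\alpha)$ I would use that $z\mapsto\P^{\sbt}(T^{\sbt}_n\le z)$ is continuous (the reweighted sum has a density under non-degeneracy), so $\P^{\sbt}(T^{\sbt}_n\ge\zz^{\sbt}_{n}(\alpha))=\alpha$; combined with the uniform bound of the previous step and the anti-concentration estimate $\sup_z\P\bigl(\bigl|\,\|G\|-z\,\bigr|\le\varepsilon\bigr)\le\CONST\,\varepsilon$ (valid since $\Sigma$ is non-degenerate), this gives $\bigl|\P(T_n\ge\zz^{\sbt}_{n}(\alpha))-\alpha\bigr|\le\CONST/\sqrt n$, i.e.\ the assertion with $\Delta_{\text{total}}(n)\le\CONST/\sqrt n$, the failure probabilities of the good events used accounting for the stated $\P$- and $\P^{\sbt}$-bounds. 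I expect the main obstacle to be (i)--(ii): securing the \emph{non-asymptotic} $\CONST/\sqrt n$ rate in the Gaussian approximation of a Euclidean norm, in the data world and conditionally in the bootstrap world. This requires a Berry--Esseen bound uniform over Euclidean balls, verification of the requisite third-moment / exponential conditions on $\xi_i$ and on $(w_i-\bar w)\xi_i$ — where the sign-changing weights of Remark~\ref{remark:weights} also force the separate control of $\bigl\|n^{-1/2}\sum_i(w_i-\bar w)\bar\xi\bigr\|$ — and, crucially, a uniform lower bound on $\lambda_{\min}(\Sigma)$ so that the anti-concentration constant and the comparison constant in Lemma~\ref{thm:gCompAppl} do not degenerate; all of these should be read off from the conditions of Section~\ref{section:conditions_barycenters}. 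The reduction of the first paragraph and the quantile argument above are then routine.
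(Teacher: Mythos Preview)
Your proposal is correct and follows the same three--step scheme as the paper: Bentkus-type Gaussian approximation of the Euclidean norm in the real world (the paper's Proposition~\ref{theorem:T_approx_real_world}), its conditional analogue in the bootstrap world together with absorption of the scalar \(1/\bar w\) via a sandwich and anti-concentration (the paper's Lemma~\ref{thm:bApprox}), Gaussian comparison of the two covariances through Pinsker and matrix Bernstein (the paper's Proposition~\ref{thm:gCompAppl}), and finally a quantile argument identical to the paper's proof of Theorem~\ref{theorem:bootstrap_validity_final_barycenters}.

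The one structural difference is that you stack the mean and eigenvalue parts into a single vector \(\xi_i=(m_i-r_0,\lambda_i-\lambda_0)\in\R^{2d}\) and run the whole scheme once, whereas the paper treats \(\|\xi\|^2\) and \(\|\eta\|^2\) separately via Lemma~\ref{thm:indBSValidity} and then recombines them by a convolution argument (proof of Lemma~\ref{lemma:klom_dist_conf_sets_barycenters}) that relies on independence of \(m_i\) and \(\lambda_i\). Your joint treatment is cleaner and dispenses with that independence assumption, at the cost of carrying dimension \(2d\) through the Bentkus constant; the paper's split buys slightly sharper dimensional constants and allows different moment conditions on the two blocks. One minor point to tidy: in step~(ii) the array \((w_i-\bar w)(\xi_i-\bar\xi)\) is not conditionally independent because of \(\bar w\); you should apply Bentkus to the conditionally independent summands \((w_i-1)(\xi_i-\bar\xi)\) and handle the \(\bar w\)-correction afterwards, exactly as the paper does with its intermediate \(\overline{\xi}^{\sbt}\) in~\eqref{def:xi_interm} --- you already have the ingredients for this in your ``absorb \(1/\bar w\)'' remark.
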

The proof is presented in Section~\ref{subsection:boot_validity}.

\section{Application to change point detection }
\label{section:change_point_wasserstein}
In this section we consider the procedure of change point detection in the 
flow of random measures. We assume, that observations are randomly sampled from some
scatter-location family with the unknown template object \(\mu_0 \). Change point occurs
if \(\mu_0 \) switches to another unknown template \(\mu_1 \). 
Namely, we consider two location-scatter families \( (\mathcal{F}_0) \) and \( (\mathcal{F}_1) \). 
The goal is to detect the switch as soon as possible.

\begin{description}
	\item[\( (\mathcal{F}_0) \)]
\emph{Let \(\mu_0 \) be a template object, s.t. \(X \sim \mu_0 \), with mean \(r_0 \) and covariance \(\Qv_0 \),}
\[
\mathcal{F}(\mu_0) = \bigl\{\nu_{\omega},~\text{s.t}~~ Y_{\omega}\sim \nu_{\omega} ~\text{and}~~ Y_{\omega} = A_{\omega}X + a_{\omega}, (A_{\omega}, a_{\omega}) \sim \P_0 \bigr\}.
\]
\end{description}

\begin{description}
	\item[\( (\mathcal{F}_1) \)]
	\emph{Let \(\mu_1 \) be a template object, s.t. \(X \sim \mu_1 \), with mean \(r_1 \) and covariance \(\Qv_1 \),}
	\[
	\mathcal{F}(\mu_1) = \bigl\{\nu_{\omega},~\text{s.t}~~ Y_{\omega}\sim \nu_{\omega} ~\text{and}~~ Y_{\omega} = B_{\omega}X + b_{\omega}, (B_{\omega}, b_{\omega}) \sim \P_1 \bigr\}.
	\]
\end{description}
Thus, observations that come from the data flow \(\{\nu_t\}_{t \in T} \) in hand are random deformations of either \(\mu_0 \) or \(\mu_1 \).
Let the time-moment \(t \) be fixed and consider it as a candidate to change point. Define as \(h\) the size of scrolling window.
The model is written as follows
\[
\begin{cases}
\nu_i \in (\mathcal{F}_0), & i \in \{t - h, ..., t - 1 \},\\
\nu_i \in  (\mathcal{F}_1), & i \in \{t, ..., t + h - 1 \}.
\end{cases} 
\]
Now we have to choose between following alternatives:
\[
H_0 : (\mathcal{F}_0) = (\mathcal{F}_1), \quad H_1: (\mathcal{F}_0) \neq (\mathcal{F}_1).
\]
Further we present a non-parametric testing procedure. As previously, each observed measure \(\nu_i \) is characterized by 
its mean \(m_i \) and covariance matrix \(S_i \), that are known.
The idea of the method is to compare how close to each other template estimators \(\mu_{l}(t)\) and \(\mu_{r}(t) \),
that are computed using data in the left and right halves of the scrolling window respectively. 
These estimators are defined as
	\begin{equation}
	\label{def:mu_left}
	\mu_{l}(t) \eqdef \argmin_{\mu \in \mathcal{P}_2(\R^d)} \frac{1}{h}\sum_{i = t - h}^{t - 1}W^2_2(\mu, \nu_i),
	\end{equation}
	\begin{equation}
	\label{def:mu_right}
	\mu_{r}(t) \eqdef \argmin_{\mu \in \mathcal{P}_2(\R^d)} \frac{1}{h}\sum_{i = t }^{t + h - 1}W^2_2(\mu, \nu_i).
	\end{equation}
Then the test statistic is written as
	\begin{equation}
	\label{def:cp_test_barycenters}
	T_h(t) = \sqrt{h}W_2\bigl( \mu_{l}(t), \mu_{r}(t) \bigr).
	\end{equation}
	Change point is supposed to be detected at the moment \(t \) if the test exceeds some critical level \(\zz_h(\alpha) \):
	\[
	T_h(t) \geq \zz_h(\alpha), 
	\]
	where \(\zz_h(\alpha) \) is defined as \(\alpha\)-quantile of \(T_h(t) \) under \(H_0 \):
	\[
	\zz_h(\alpha) \eqdef \argmin_{\zz}\bigl\{\P_0\bigl(T_h(t) \geq \zz\bigr) \leq \alpha \bigr\}.
	\]
	As soon as \(\zz_h(\alpha) \) can not be computed analytically, the core idea of the approach 
	is to replace it with bootstrapped counterpart \( \zz^{\sbt}_h(\alpha) \).
	
	\subsection{Bootstrap procedure}
		While tuning critical values, we assume, that an observed training sample \(\{\nu_1,... \nu_M\}\)
		is iid and belongs to \((\mathcal{F}_0)  \).
		Following the already presented framework, we define counterparts of~\eqref{def:mu_left} and~\eqref{def:mu_right} in the bootstrap world:
			\begin{equation}
			\label{def:mu_left_bootstrap}
			\mu^{\sbt}_{l}(t) \eqdef \argmin_{\mu \in \mathcal{P}_2(\R^d)} \frac{1}{\sum_{i}w_i}\sum_{i = t - h}^{t - 1}W^2_2(\mu, \nu_i)w_i,
			\end{equation}
			\begin{equation}
			\label{def:mu_right_bootstrap}
			\mu^{\sbt}_{r}(t) \eqdef \argmin_{\mu \in \mathcal{P}_2(\R^d)} \frac{1}{\sum_{i}w_i} \sum_{i = t }^{t + h - 1}W^2_2(\mu, \nu_i)w_i,
			\end{equation}
			where weights \(w_i\) follow Condition \( {(E\!D^{W}_{\b})} \) and are independent of the observed data set\\
			 \(\{\nu_{t-h},..., \nu_{t + h - 1} \} \). The bootstrapped statistic test is
				\begin{equation}
				\label{def:cp_test_boot_barycenters}
				T^{\sbt}_h(t) = \sqrt{h}W_2\bigl( \mu^{\sbt}_{l}(t), \mu^{\sbt}_{r}(t) \bigr).
				\end{equation}
			We define bootstrapped quantile \(\zz^{\sbt}_{h}(\alpha) \) as
			\begin{equation}
			\label{def:boot_qunatile_cp_barycenters}
				\zz^{\sbt}_{h}(\alpha) \eqdef \argmin_{\zz}\bigl\{\P^{\sbt}\bigl(T^{\sbt}_h(t) \geq \zz\bigr) \leq \alpha \bigr\}.
			\end{equation}
			The procedure of critical value calibration is presented in Algorithm~\ref{alg:computation_z_change_point}.

\subsection{The case of commuting matrices}
As previously (see Section~\ref{section:confidence_sets}) we consider the setting where covariance matrices commute. 
In this case scatter-location families \((\mathcal{F}_0)\) and \((\mathcal{F}_1)\) 
turn into \((\mathcal{F}^{C}_0)\) and \((\mathcal{F}^{C}_1)\).
\begin{description}
	\item[\( (\mathcal{F}^{C}_0) \)]
	\emph{Let \(\mu_0 \in \mathcal{P}^{ac}_{2}(\R^d) \) be a template object, 
		s.t. \(X \sim \mu_0 \), with mean \(r_0 \) and covariance \(\Qv_0 \),
		The eigenvalue decomposition of \(\Qv_0 = U \Lambda^2_0 U^{\T} \)}
	\[
	\mathcal{F}^{C}(\mu_0) = \bigl\{\nu_{\omega},~\text{s.t}~~ 
	Y_{\omega}\sim \nu_{\omega} ~\text{and}~~ 
	Y_{\omega} = A_{\omega}X + a_{\omega}, 
	(A_{\omega}, a_{\omega}) \sim \P_0 \bigr\}.
	\]
	\emph{
		Furthermore, each \(A_i\) is decomposed as \(A_i = U \alpha_i U^{\T} \), 
		with \(\alpha_i \) diagonal positively defined matrix.
		We assume \( (a_i, \alpha_i) \sim \P_0\) , \(\P_0 \) is defined on \(\R^d\times\mathbb{S}_{+}(d, \R) \),
		and \(a_i \) and \(\alpha_i \) are independent of each other.
		}
\end{description}

\begin{description}
	\item[\( (\mathcal{F}^{C}_1)\)]
	\emph{Let \(\mu_1 \in \mathcal{P}^{ac}_{2}(\R^d) \) be a template object, 
	s.t. \(X \sim \mu_1 \), with mean \(r_1 \) and covariance \(\Qv_1 \),
	The eigenvalue decomposition of \(\Qv_1 = U \Lambda^2_1 U^{\T} \)}
	\[
	\mathcal{F}^{C}(\mu_1) = \bigl\{\nu_{\omega},~\text{s.t}~~ 
	Y_{\omega}\sim \nu_{\omega} ~\text{and}~~ 
	Y_{\omega} = B_{\omega}X + b_{\omega}, 
	(B_{\omega}, b_{\omega}) \sim \P_1 \bigr\}.
	\]
	\emph{Furthermore, each \(B_i\) is decomposed as \(B_i = U \beta_i U^{\T} \), 
		with \(\beta_i \) diagonal positively defined matrix.
		We assume \( (b_i, \beta_i) \sim \P_1\) and \(\P_1 \) is defined on \(\R^d\times\mathbb{S}_{+}(d, \R) \),
			and \(b_i \) and \(\beta_i \) are independent of each other.
		}
\end{description} 
Each observed measure \(\nu_i \) is characterized by mean \(m_i \) and covariance matrix \(S_i \),
the latter one is decomposed as
\(S_i = U \Lambda^2_i U^{\T}\), ~with \(\Lambda^2_i = \diag(\lambda_i)^2 \).
Taking into account~\eqref{def:W2_distance_commute},~\eqref{def:emp_b_commute} 
and~\eqref{def:emp_b_commute_boot} one can see, 
that the squared test statistic~\eqref{def:cp_test_barycenters} 
and its reweighted counterpart~\eqref{def:cp_test_boot_barycenters} are
\begin{equation}
\label{def:cp_test_commute}
T^{2}_h(t) = h\Biggl\| \frac{1}{h}\sum_{i \in \Lt} m_i -  \frac{1}{h}\sum_{i \in \Rt} m_i \Biggr \|^2 + h \Biggl\|\frac{1}{h}\sum_{i \in \Lt} \lambda_i - \frac{1}{h}\sum_{i \in \Rt} \lambda_i  \Biggr\|^2,
\end{equation}
\begin{align}
\label{def:cp_test_commute_boot}
T^{\sbt 2}_h(t) = &h\Biggl\| \frac{1}{\sum_{i \in \Lt} w_i}\sum_{i \in \Lt} m_iw_i -  \frac{1}{\sum_{i \in \Rt} w_i}\sum_{i \in \Rt} m_iw_i \Biggr \|^2 \nonumber \\
&+ h \Biggl\|\frac{1}{\sum_{i \in \Lt} w_i}\sum_{i \in \Lt} \lambda_i w_i - \frac{1}{\sum_{i \in \Rt} w_i}\sum_{i \in \Rt} \lambda_iw_i  \Biggr\|^2, 
\end{align}
where
\[
\Lt \eqdef \{t-h,..., t-1 \}, \quad \Rt \eqdef \{t,..., t + h -1 \}.
\]
The next theorem shows, that bootstrapped quantile \(\zz^{\sbt}_{h}(\alpha) \)~\eqref{def:boot_qunatile_cp_barycenters} 
can be used instead of \(\zz_{h}(\alpha) \).
\begin{remark}
	For transparency of further presentation, 
	here we use non-intersected running windows. 
	Thus, for each \( t\) observed \(T_h(t) \) are iid.
	Thus, there is a slight modification in Algorithm~\ref{alg:computation_z_change_point}.
	Instead of considering each \(t \in \{h + 1,..., M - h \} \),
	we consider \(t \in \mathcal{I} = \{h + 1, 3h + 1,..., M - h \} \).
\end{remark}
\begin{theorem}[Bootstrap validity for change point detection]
	\label{theorem:bootstrap_validity_final_cp_barycenters}
	Let observed data \( \nu_1,..., \nu_M \) be iid sample from model \(\mathcal{F}^{C}(\mu_0) \).
	Let the size of running window \(h \) be fixed.
	Under conditions from Section~\ref{section:conditions_barycenters} it holds	with probability\( \P \geq 1 - 6e^{-\xx} - 2e^{-6\xx^2} \), 
	\(\P^{\sbt} \geq 1 -( e^{-\xx} + 2e^{-6\xx^2}) \)
	\[
	\bigl| \P\bigl(  \max_{t \in \mathcal{I}}T_h(t) \leq \zz^{\sbt}_{ h}(\alpha) \bigr) - \alpha \bigr| \leq \tfrac{M}{2h}\Delta_{\text{total}, cp}(h),
	\]
	where \(\zz^{\sbt}_{h}(\alpha) \) comes from~\eqref{def:boot_quantile_barycenters} and
	\[
	\Delta_{\text{total}, cp}(h) \leq \CONST/\sqrt{h},
	\]
	here \(\CONST \) is some generic constant and \(\Delta_{\text{total, cp}}(h)\) is defined in~\eqref{def:error_total_cp_barycenters} .  
\end{theorem}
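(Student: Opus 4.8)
The plan is to exploit the commuting structure to reduce to a finite-dimensional problem, then to run on each window the same Gaussian-approximation / Gaussian-comparison / bootstrap argument as in Theorem~\ref{theorem:bootstrap_validity_final_barycenters}, and finally to assemble the windows by a factorization that produces the $\tfrac{M}{2h}$ factor. Concretely, by~\eqref{def:W2_distance_commute} and~\eqref{def:cp_test_commute}, under $(\mathcal{F}^{C}(\mu_0))$ each $\nu_i$ is described by the pair $\xi_i \eqdef (m_i,\lambda_i) \in \R^{d}\times\R^{d}$, the $\xi_i$ are iid with mean $(r_0,\lambda_0)$, and $T_h^{2}(t) = \bigl\| \sqrt{h}\,(\bar\xi_{L(t)} - \bar\xi_{R(t)}) \bigr\|^{2}$ with $\bar\xi_{L(t)},\bar\xi_{R(t)}$ the within-half sample means; by~\eqref{def:cp_test_commute_boot}, $T_h^{\sbt 2}(t)$ is the same squared Euclidean norm with $\bar\xi_{L(t)},\bar\xi_{R(t)}$ replaced by the self-normalized weighted means $\sum_{i}\xi_i w_i/\sum_i w_i$ over the corresponding half. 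So for each fixed $t$ the task is to compare the law of $\|S_h\|$, where $S_h$ is a normalized sum of $2h$ iid centred $2d$-vectors, with the conditional law of its weighted counterpart $\|S_h^{\sbt}\|$.

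For a fixed $t$, the Gaussian-approximation step is a Berry--Esseen / finite-sample CLT bound (as in \citet{spokoiny_zhilova_2015}): the law of $S_h$ is within $\CONST/\sqrt{h}$, in Kolmogorov distance over Euclidean balls, of $\mathcal{N}(0,2\Sigma)$ with $\Sigma\eqdef\Var(\xi_1)$; applying the same bound conditionally on the data (the bootstrap step) gives that the law of $S_h^{\sbt}$ is within $\CONST/\sqrt{h}$ of $\mathcal{N}(0,2\widehat\Sigma_t)$, where $\widehat\Sigma_t$ is built from the empirical second moments of the two halves, and here one also uses $\sum_i w_i/h\to 1$ with exponential accuracy, which is where the $e^{-\xx}$ and $2e^{-6\xx^{2}}$ terms in $\P^{\sbt}$ enter. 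The Gaussian-comparison step: a matrix-concentration bound gives $\|\Sigma - \widehat\Sigma_t\| \le \CONST/\sqrt{h}$ on an event of $\P$-probability $\ge 1-6e^{-\xx}$, and a Gaussian comparison inequality together with Nazarov's anti-concentration estimate for $\|\eta\|$ (see \citet{chernozhukov_2014}) upgrades this to $\sup_{\zz\ge 0}\bigl|\P(\|\eta\|\le\zz) - \P(\|\eta^{\sbt}_t\|\le\zz)\bigr| \le \CONST/\sqrt{h}$ for the two Gaussian vectors. Concatenating the three steps gives, \emph{uniformly in $\zz$},
\[
\bigl| \P\bigl(T_h(t) \le \zz\bigr) - \P^{\sbt}\bigl(T^{\sbt}_h(t) \le \zz\bigr) \bigr| \le \Delta_{\text{total}, cp}(h) \le \CONST/\sqrt{h},
\]
which is the one-window content of Theorem~\ref{theorem:bootstrap_validity_final_barycenters} and fixes $\Delta_{\text{total}, cp}(h)$ in~\eqref{def:error_total_cp_barycenters} as the sum of the three terms.

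Since the running windows are taken non-overlapping, the statistics $\{T_h(t)\}_{t\in\mathcal{I}}$ are iid and, conditionally on the sample, so are $\{T^{\sbt}_h(t)\}_{t\in\mathcal{I}}$ (distinct windows use disjoint blocks of weights), so both the true and the bootstrap CDFs of the maximum factorize. Using $\bigl|\prod_t a_t - \prod_t b_t\bigr| \le \sum_t|a_t-b_t|$ for $a_t,b_t\in[0,1]$ and $|\mathcal{I}|\le M/(2h)$, for every $\zz$
\[
\Bigl| \P\bigl(\max_{t\in\mathcal{I}} T_h(t) \le \zz\bigr) - \prod_{t\in\mathcal{I}}\P^{\sbt}\bigl(T^{\sbt}_h(t)\le\zz\bigr) \Bigr| \le \sum_{t\in\mathcal{I}}\bigl| \P(T_h(t)\le\zz) - \P^{\sbt}(T^{\sbt}_h(t)\le\zz)\bigr| \le \tfrac{M}{2h}\,\Delta_{\text{total}, cp}(h),
\]
and the uniformity in $\zz$ from the comparison step licenses substituting the \emph{random} threshold $\zz=\zz^{\sbt}_{h}(\alpha)$, at which $\prod_{t\in\mathcal{I}}\P^{\sbt}(T^{\sbt}_h(t)\le\zz^{\sbt}_h(\alpha))$ equals the nominal level by construction; this gives the claimed bound.

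The hardest part is the Gaussian comparison for the \emph{norm} functional $z\mapsto\|z\|$: turning closeness of covariances into a bound that is \emph{uniform over all thresholds $\zz$} for a non-smooth functional requires an anti-concentration estimate for $\|\eta\|$ near arbitrary levels with constants independent of $h$; and, secondarily, keeping the $e^{-\xx}$- and $e^{-6\xx^{2}}$-type exceptional events synchronized between the data world and the bootstrap world and checking that the single-window rate $\CONST/\sqrt{h}$ survives multiplication by $M/(2h)$ with no hidden $h$- or $d$-dependence.
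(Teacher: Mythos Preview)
Your overall strategy is the paper's: reduce to Euclidean norms via the commuting structure, run the Gaussian-approximation / Gaussian-comparison machinery window by window (this is Lemma~\ref{theorem:approx_cp_barycenters_general} in the paper), then aggregate over the non-overlapping windows and substitute the bootstrap quantile exactly as in the proof of Theorem~\ref{theorem:bootstrap_validity_final_barycenters}. Two cosmetic differences: you stack $(m_i,\lambda_i)$ into a single $2d$-vector and argue once, while the paper keeps the mean part $\xi$ and the eigenvalue part $\eta$ separate and recombines by convolution using the assumed independence of $a_i$ and $\alpha_i$ in $(\mathcal{F}^{C}_0)$; and for the maximum you use the product factorization together with the telescoping bound $\bigl|\prod_t a_t-\prod_t b_t\bigr|\le\sum_t|a_t-b_t|$, which is actually tidier than what the paper writes at that step.

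There is, however, one genuine omission that is specific to the change-point case. The bootstrap statistic $T_h^{\sbt}(t)$ compares the two weighted half-means directly and is therefore \emph{not} conditionally centred: given the data, the (approximate) conditional mean of your $S_h^{\sbt}$ is $\delta^{\sbt}_{\E}=\sqrt{h}\bigl(\bar\xi_{L(t)}-\bar\xi_{R(t)}\bigr)$, not zero. Hence your claim that the conditional law of $S_h^{\sbt}$ is within $\CONST/\sqrt{h}$ of $\mathcal{N}(0,2\widehat\Sigma_t)$ skips a step; the Bentkus-type bound only gives closeness to $\mathcal{N}(\delta^{\sbt}_{\E},2\widehat\Sigma_t)$, and one then needs a separate anti-concentration argument together with a tail bound on $\|\delta^{\sbt}_{\E}\|$ under $H_0$ to remove the shift. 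This is exactly the extra ingredient in the paper's Lemma~\ref{proposition:gar_boot_cp}, producing the additional term $\Delta^{\sbt}_{\xi(t),\E}(h)$ in~\eqref{def:delta_xi_total_cp_barycenters} and the extra $2e^{-6\xx^{2}}$ on the $\P$-side; it is the one place where the change-point proof is not simply Theorem~\ref{theorem:bootstrap_validity_final_barycenters} re-run with $n$ replaced by $h$, and you should add it explicitly.
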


\newpage
\section{Algorithms}
\label{section:algo_barycenters}
\begin{algorithm}[!h]
	\KwData{Sample \(\{\nu_1,... \nu_n\}\), distribution of \(w_i\), false-alarm rate \(\alpha \) }
	\KwResult{\(\zz_{n}^\b(\alpha)\)}
	initialize the number of iterations \(M\)\\
	\For{$ i  \in \{1,...J \}$}{ 
		sample an iid set \(\{w_1,..., w_n\}  \)\\
		compute \(\mu^{\b}_n(i) \) using~\eqref{def:barycenter_boot}\\
	}
	for each \(t \geq 0 \) compute ecdf \(F^\b_{n, M}(t) \):
	\[
	F^\b_{n, M}(t) = \frac{1}{M}\sum_{i = 1}^n\I\Bigl\{ \sqrt{n}W_2\bigl( \mu^{\b}_n(i), \mu_n \bigr) \leq t \Bigr\} 
	\]
	compute bootstrapped quantile \(\zz_{n}^\b(\alpha)\):
	\[
	\zz_{n}^\b(\alpha) = \inf_{t \geq 0} \Bigl\{F^\b_{n, M}(t) \geq 1 - \alpha \Bigr\}
	\]
	
	\caption{Computation of critical value \(	\zz_{n}^\b(\alpha)\)}	\label{alg:computation_z}
\end{algorithm}

\begin{algorithm}[!h]
	\KwData{Sample \(\{\nu_1,... \nu_M\}\), window width \(h\), distribution of \(w_i\), false-alarm rate \(\alpha \) }
	\KwResult{\(\zz^{\sbt}_{h}(\alpha)\)}
	initialize the number of iterations \(J\)\\
	initialize  vector \(R \), length(\(R\)) = \(J\)\\
	\For{$ i  \in \{1,... J\}$}{ 
		sample an iid set \(\{w_1,..., w_M\}  \)\\
		\(R(j) := 0\);\\
		\For{$t \in \{h+1, ..., M-h \} $}{
			compute \(\mub^{\b}_{l}(t, i) \), \(\mub^{\b}_{r}(t, i) \), \(T^{\sbt}(t, i) \)   using~\eqref{def:mu_left_bootstrap},~\eqref{def:mu_right_bootstrap} and~\eqref{def:cp_test_boot_barycenters}\\
			\(R(j) := \max\bigl( R(j), T^{\sbt}(t, i)\bigr)\)\\
		}
		
	}
	for \(\xx \geq 0 \) compute ecdf \(F^{\sbt}_{h}(\xx) \):
	\vspace{-5pt}
	\[
	F^\b_{h}(\xx) = \frac{1}{J}\sum_{j = 1}^J\I\Bigl\{ R(j) \leq \xx \Bigr\} 
	\]
	compute bootstrapped quantile:
\(
	\zz^{\sbt}_{h}(\alpha) = \inf_{\xx \geq 0} \Bigl\{F^\b_{h}(\xx) \geq 1 - \alpha \Bigr\}
	\)
	
	\caption{Computation of critical value \(	\zz^{\sbt}_{h}(\alpha)\)}
	\label{alg:computation_z_change_point}
\end{algorithm}
\section{Experiments}
\label{section:experiments}
In this section we consider experiments on real and artificial data. 
The examples give intuition that the method of
construction of confidence sets and change point detection
is valid in more general cases, than considered in
Theorem~\ref{theorem:bootstrap_validity_final_barycenters} and 
Theorem~\ref{theorem:bootstrap_validity_final_cp_barycenters}.
\subsection{Coverage probability of the true object \(\mu^* \)}
This section examines the quality of the approximation 
of \(\zz_{n}(\alpha)\) by \( \zz^{\b}_{n}(\alpha) \) in case of confidence sets construction. 
We assume that each observed object \(\nu \sim \P \) is a random transformation
of a template \(\mu^*_0 \), s.t.
\[
\E_{\tiny \P} \nu = \mu^*_0.
\]
As before, denote as \(\mu_n\) the empirical barycenter of some i.i.d. set  \(\{\nu_1,..., \nu_n \} \), \( \nu_i \sim \P \). 
For a fixed false-alarm rate \(\alpha \) the confidence set \(\mathcal{C}\bigl(\zz_{n}(\alpha) \bigr)\) 
is defined in~\eqref{def:conf_sets_barycenters}.
Our goal is to check the closeness of \(\zz^{\b}_{n}(\alpha) \) computed with Algorithm~\ref{alg:computation_z} 
and \(\zz^{\b}_{n}(\alpha)\).
To do that let's introduce some other template object \(\mu^*_1 \): \(\mu^*_1 \neq \mu^*_0 \). 
We are interested in the estimation of the following probabilities:
\begin{equation}
\label{def:false_alarm_rate}
\P\Bigl( \mu^*_0 \in \mathcal{C}\bigl(\zz^{\b}_{n}(\alpha)  \bigr) \Bigr), \qquad \P\Bigl( \mu^*_1 \not\in \mathcal{C}\bigl(\zz^{\b}_{n}(\alpha)  \bigr) \Bigr).
\end{equation}

To do that we follow the work by \citet{cuturi2013} and consider as a template object \(\mu^*_0 \) 
two concentric circles, that are depicted at the left-hand side of Fig.~\ref{fig:recovery}.
The middle panel contains four samples of \(\nu \). Each \(\nu_i \) is 
obtained by random shifts and dilations of each circle.
The last box depicts the barycenter \(\mu_n \).
Naturally, each image can be considered as a uniform measures on \( \R^2\). 
As \(\mu^*_1 \) we consider a single shifted random ellipse presented at Fig.\ref{fig:ellips}. 
To compute optimal transport we use iterative Bregman 
projections algorithm presented in~\citet{benamou2015iterative}.
The code can be found on Git-Hub repository following the link 
\href{https://github.com/gpeyre/2014-SISC-BregmanOT}{https://github.com/gpeyre/2014-SISC-BregmanOT}.
\begin{remark}
It is worth noting that the algorithm solves a penalized problem rather then the original one. 
In other words, instead of minimizing
\[
 \int_{\R^d} \| x- y\|^2 d\pi(x,y) \rightarrow \min_{\pi \in \Pi(\mu, \nu)},
\]
it optimizes the following target function
\[
\int_{\R^d} \| x- y\|^2 d\pi(x,y) + \gamma \int_{\R^d}\pi(x, y)d\pi(x, y) \rightarrow \min_{\pi \in \Pi(\mu, \nu)},
\]
The solution of regularized problem converges to the solution of the original one
with the decay of \(\gamma\). Thus, choosing relatively small regularization parameter \(\gamma =.005  \),
we assume that the obtained results are quite close to the solution
of the original problem.
\end{remark}

\begin{figure}[ht!]
	\centering
	\includegraphics[width=.85\textwidth]{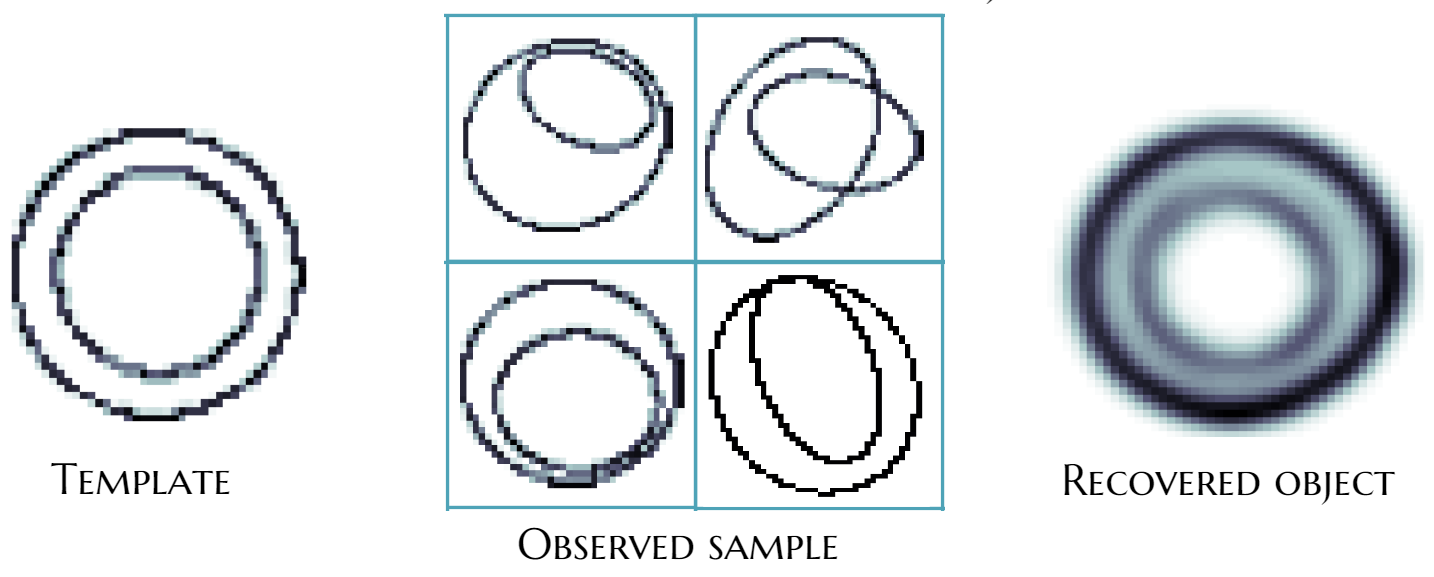}
	\caption{Recovery of the template object} 
	 \label{fig:recovery}
\end{figure}
\begin{figure}[ht!]
	\centering
	\includegraphics[width=.85\textwidth]{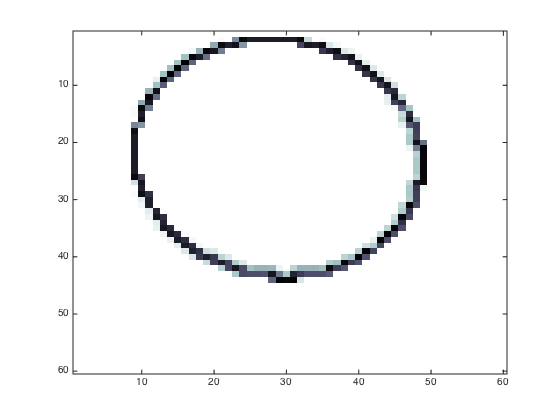}
	\caption{\(\mu^*_1 \) is a dilated and shifted single ellipse} 
	\label{fig:ellips}
\end{figure}
The experiments are carried for eight sample sizes \(n \in \{5, 10, 15, 20, 25, 30, 35, 40 \} \).
Confidence intervals are estimated for two different false-alarm rates \(\alpha \in \{.05, .01 \} \).
Bootstrap weights follow Poisson distribution \(w_i \sim \text{Po}(1) \).
Empirical estimators of~\eqref{def:false_alarm_rate} are presented 
in Tables~\ref{table:true} and~\ref{table:test}.

\begin{table}[h!]
\caption{\(\hat{\P}\Bigl( \mu^*_0 \in \mathcal{C}\bigl(\zz^{\b}_{n}(\alpha)  \bigr)  \Bigr)\)}
\centering
\begin{tabular}{c c c c c c c c c }
\hline\hline
                                 & \(n = 5 \)  & \(n = 10 \) &\(n = 15 \) & \(n = 20 \) & \(n = 25 \) & \(n = 30 \) & \(n = 35 \) & \(n = 40 \)\\ [0.5ex] 
\hline
 \(\alpha = .05 \) \vline &      \(.90 \)        &\(.91\)        &       \(.97\) & \(.97 \) & \(.98\) & \(.92\) & \(.97\) & \(.93 \)\\
\(\alpha = .01 \) \vline  &       \(.91\)        &\(.96\)     &\(.99\) & \(.99\) & \(1\) & \(.98 \) & \(.99\) & \( .96\)\\
\hline
\end{tabular}
\label{table:true}
\end{table}

\begin{table}[h!]
	\caption{\(\hat{\P}\Bigl( \mu^*_1 \not\in \mathcal{C}\bigl(\zz^{\b}_{n}(\alpha)  \bigr)  \Bigr)\)}
	\centering
	\begin{tabular}{c c c c c c c c c}
		\hline\hline
		& \(n = 5 \)  & \(n = 10 \) &\(n = 15 \) & \(n = 20 \) & \(n = 25\) & \(n = 30 \) & \(n = 35\) & \(n = 40 \) \\ [0.5ex] 
		\hline
		\(\alpha = .05 \) \vline &       \(.85\)         &\(.84\)        &       \(.94\) & \(.94 \) & \(.94\) & \(.98\) &\(.98\) & \(.97\)\\
		\(\alpha = .01 \) \vline  &       \(.75\)       &\(.73\)     &\(.76\) & \(.84\)  & \(.86\) &  \(.91\) &\(.90\) & \(.94\) \\
		\hline
	\end{tabular}
	\label{table:test}
\end{table}

\subsection{Experiments on the real data}
This section presents algorithm performance of Algorithm~\ref{alg:computation_z} on the real data. 
We use MNIST (handwritten digit database) 
\href{http://yann.lecun.com/exdb/mnist/}{http://yann.lecun.com/exdb/mnist/}. 
It contains around 60000 indexed black-and-white images. 
Each image is a bounding box of \(28 \times 28\) pixels with a written digit inside. 
Several examples are presented at Fig.~\ref{fig:mnist}. 
All symbols are approximately of the same size.  
Fig.~\ref{fig:mnist_barycenter} presents empirical barycenter for each digit,
computed using all images in the database.

Naturally, there is no "template object" for hand-written symbols. 
Thus, the predefined template \(\mu_0 \) 
can be replaced with population barycenter \(\mu^*\).
However, on practice it can not be calculated as well.
To carry out the test, instead of \(\mu^* \) we use
an empirical barycenter \(\mu_N \) of a large 
homogeneous random sample \(\{\nu_1,..., \nu_N \} \).
\begin{figure}[ht!]
	\centering
	\includegraphics[width=.8\textwidth]{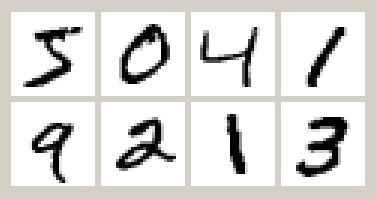}
	\caption{Random sample from MNIST database} 
	 \label{fig:mnist}
\end{figure}

\begin{figure}[ht!]
	\centering
	\includegraphics[width=.95\textwidth]{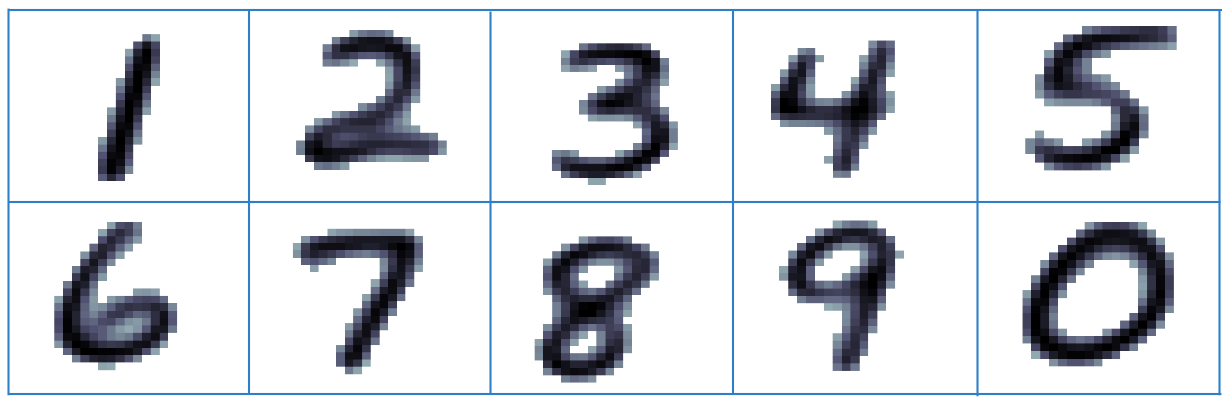}
	\caption{Barycentric digits, computed over the whole MNIST database} 
	\label{fig:mnist_barycenter}
\end{figure}

Now we briefly explain the experiment setting.
Denote as \(\mathcal{S}\) the set of all MNIST images. 
As before, each image \(\nu \in \mathcal{S}\) can be considered as some measure on \(\R^2\) 
with a finite support of size \(28\times 28\). 
First fix some reference and test digits and denote them as \(r\) and \(t\) respectively.
Then extract all \(r\)- and \(t\)-entries from \(\mathcal{S}\)
\[
\mathcal{S}_r = \{\text{set of all \(r\)'s}  \},
\qquad
\mathcal{S}_t = \{\text{set of all \(t\)'s}  \}.
\]
From the reference set \( \mathcal{S}_r \) we then sample some 
\( \{\nu^{r}_1,..., \nu^{r}_n\} \subset \mathcal{S}_r\) 
and denote as \(\mu^r_n\) its empirical barycenter. 
Let \(\mathcal{C}_n(\zz^{\sbt}_{\alpha}) \) be \(\alpha\)-confidence set around  \(\mu^r_n\).
The test procedure aims to estimate two following probabilities
\[
{\P}\Bigl( \mu^{r} \in \mathcal{C}\bigl(\zz^{\b}_{n}(\alpha)  \bigr) \Bigr), \qquad{\P}\Bigl( \mu^{t} \not\in \mathcal{C}\bigl(\zz^{\b}_{n}(\alpha)  \bigr) \Bigr),
\]
where \(\mu^{r}\) and \(\mu^{t}\) are empirical barycenters,
computed using whole sets \( \mathcal{S}_r \) and \(\mathcal{S}_t  \) respectively (see Fig.~\ref{fig:mnist_barycenter}). 
As the reference and test digits are used \(r = 3\) and \(t = 8\) respectively.
We consider eight sample sizes \(n \in\{5, 10, 15, 20, 25, 30, 35, 40\} \) 
and two confidence levels \(\alpha \in \{.05, .01 \} \).
Empirical probabilities are estimated using \(100\) experiments for each \(n\) and \(\alpha\).
The results are presented in Tables~\ref{table:true_digits} and~\ref{table:test_digits}.

\begin{table}[h!]
	\caption{\(\hat{\P}\Bigl( \mu^{3} \in \mathcal{C}\bigl(\zz^{\b}_{n}(\alpha)  \bigr)  \Bigr)\)}
	\centering
	\begin{tabular}{c c c c c c c c c }
		\hline\hline
		                                 &\(n = 5 \)& \(n = 10 \)&\(n = 15 \)&\(n = 20 \)& \(n = 25 \)& \(n = 30 \) & \(n = 35 \) & \(n = 40 \)\\ [0.5ex] 
		\hline
		\(\alpha = .05 \) \vline  &   \(.80 \)&\(.90\)       &     \(.92\) & \(.95 \)    & \(.95\) & \(.99\) & \(.99\) &  \(.99\)\\
		\(\alpha = .01 \) \vline  &   \(.91\) &\(.94\)       &    \(.97\)  & \(.97\)     & \(.95\) & \(.99\) & \(.99\) &  \(1\)\\
		\hline
	\end{tabular}
	\label{table:true_digits}
\end{table}

\begin{table}[h!]
	\caption{\(\hat{\P}\Bigl( \mu^{8} \not\in \mathcal{C}\bigl(\zz^{\b}_{n}(\alpha)  \bigr)  \Bigr)\)}
	\centering
	\begin{tabular}{c c c c c c c c c}
		\hline\hline
		& \(n = 5 \)  & \(n = 10 \) &\(n = 15 \) & \(n = 20 \) & \(n = 25\) & \(n = 30 \) & \(n = 35\) & \(n = 40 \) \\ [0.5ex] 
		\hline
		\(\alpha = .05 \) \vline &       \(.92\)         &\(1\)        &       \(1\) & \(1 \) & \(1\) & \(1\) &\(1\) & \(1\)\\
		\(\alpha = .01 \) \vline  &       \(.80\)       &\(.96\)     &\(.99\) & \(1\)  & \(1\) &  \(1\) &\(1\) & \(1\)\\
		\hline
	\end{tabular}
	\label{table:test_digits}
\end{table}

Fig.~\ref{fig:stat} shows an empirical distribution of \(2\)-Wasserstein distance in two following cases: 
\(\sqrt{n}W_2(\mu^{r}_n, \mu^{r})\) (the left histogram) and \(\sqrt{n}W_2(\mu^{r}_n, \mu^{t})\) (the right histogram) respectively. 
The red vertical line is \(\alpha \)-quantile \(\zz^{\b}_{n}(\alpha)\) computed with the bootstrap procedure.
Fixed parameters are  \(n = 20 \), \(\alpha = .01 \), \(r = 3\), \(t = 1\).
\begin{figure}[ht!]
	\centering
	\includegraphics[width = .95\textwidth]{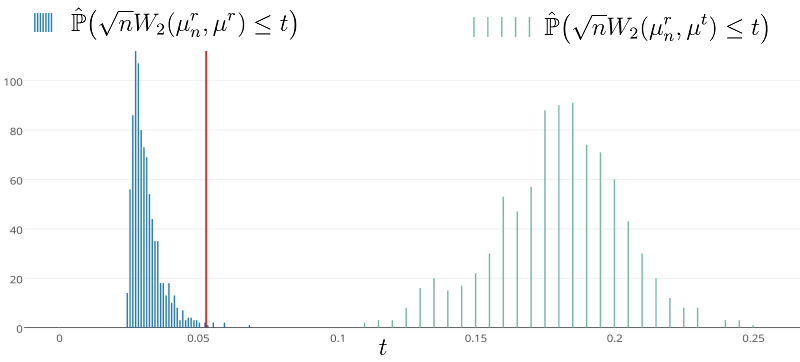}
	\caption{Distribution of \( \sqrt{n}W_2(\mu^r_n, \mu^r)\) and \( \sqrt{n}W_2(\mu^r_n, \mu^{t})\)} 
	\label{fig:stat}
\end{figure} 

\subsection{Application to change point detection}
This section illustrates the performance of change point detection procedure.
We consider the following data generation process.
Before the change point observed data is generated from some template \(\mu^*_0 \) and from \(\mu^*_1 \) afterwords.
Two examples are presented at Fig.~\ref{fig:curv_triang}. The upper panel shows a switch from nested ellipses
to curved triangles. The bottom panel refers to switch from handwritten digits "three" to "five". The second data set
is randomly sampled from MNIST databases.
 Let \(T_h(t) \) be the test statistics 
\[
T_h(t) = \sqrt{h}W_2\bigl(\mu^l_{h}(t), \mu^r_{h}(t)\bigr),
\]
where \(\mu^l_{h}(t) \) and \(\mu^r_{h}(t) \) stand for the barycenter of \(\{\nu_{t - h},..., \nu_{t - 1}  \} \) and \(\{\nu_{t},..., \nu_{t + h - 1}  \} \) respectively.
\begin{figure}[h!]
	\centering
	\includegraphics[width=.85\textwidth]{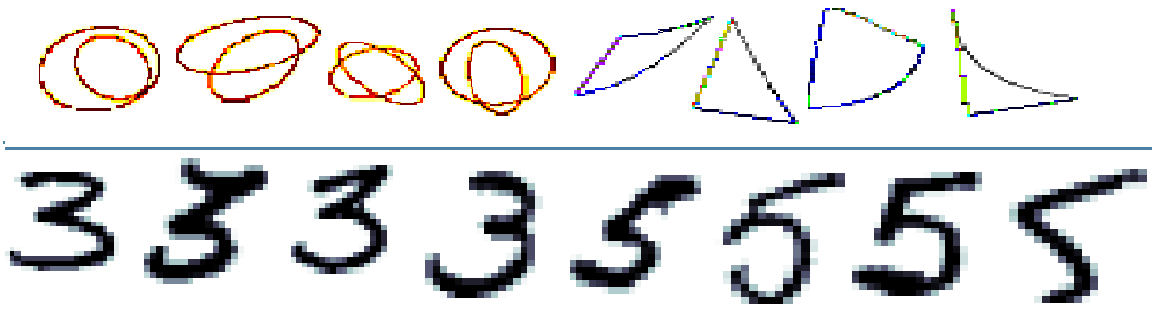}
	\caption{Change point in a flow of random images} 
	\label{fig:curv_triang}
\end{figure} 

Fig.~\ref{fig:cp_detection} illustrates work of the algorithm on artificial data, namely on the stream
that switches from random nested ellipses to curved triangles. Critical value \(\zz^{\sbt}_h(\alpha) \) for \(\alpha = .01 \) 
is computed with Algorithm~\ref{alg:computation_z_change_point} and depicted with horizontal line.
We use the width of scrolling window \(2h = 10 \). 
Switch occurs at \(t = 40\) and is marked with the black vertical line. 
The panel below shows the data before and after the change point, namely 
\(\nu_t \) for \(t \in \{36, 37, 38, 39, 40, 41, 42, 43 \} \).
Fig.~\ref{fig:cp_detection_digits} shows the behaviour of \(T_h(t) \) on real data set. Switch occurs at
\(t = 200 \), the width of scrolling window is \(2h = 100 \). Horizontal lines, that refer to critical levels  \(\alpha = .01 \) and \(\alpha = .05 \) respectively 
are computed with Algorithm~\ref{alg:computation_z_change_point}.
As before, the panel below depicts observations in the vicinity of the change point: 
\(\nu_t \) for \(t \in \{196, 197, 198, 199, 200, 201, 202, 203 \} \).
\newpage
\begin{figure}[ht!]
	\centering
	\includegraphics[width=.95\textwidth]{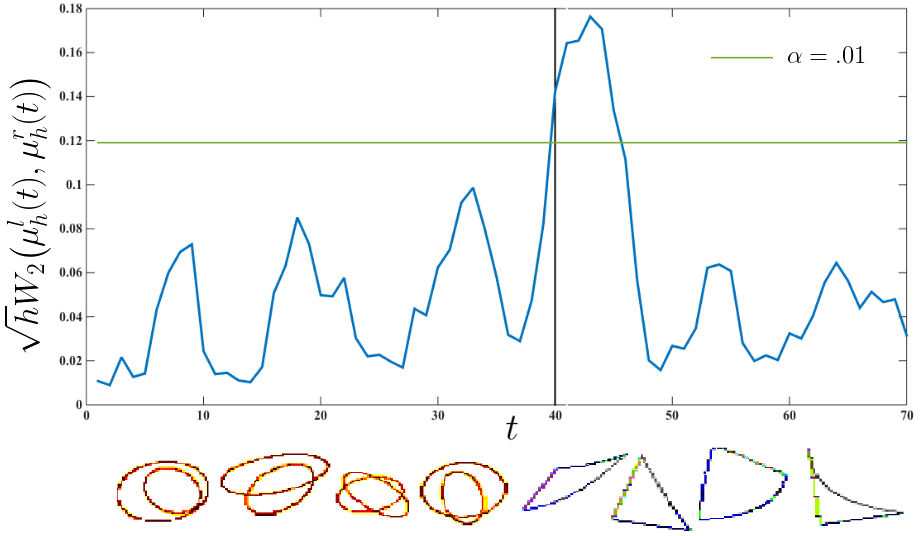}
	\caption{Distribution of \( \sqrt{h}W_2\bigl(\mu^l_h(t), \mu^r_h(t)\bigr)\)} 
	\label{fig:cp_detection}
\end{figure} 
\begin{figure}[ht!]
	\centering
	\includegraphics[width=.95\textwidth]{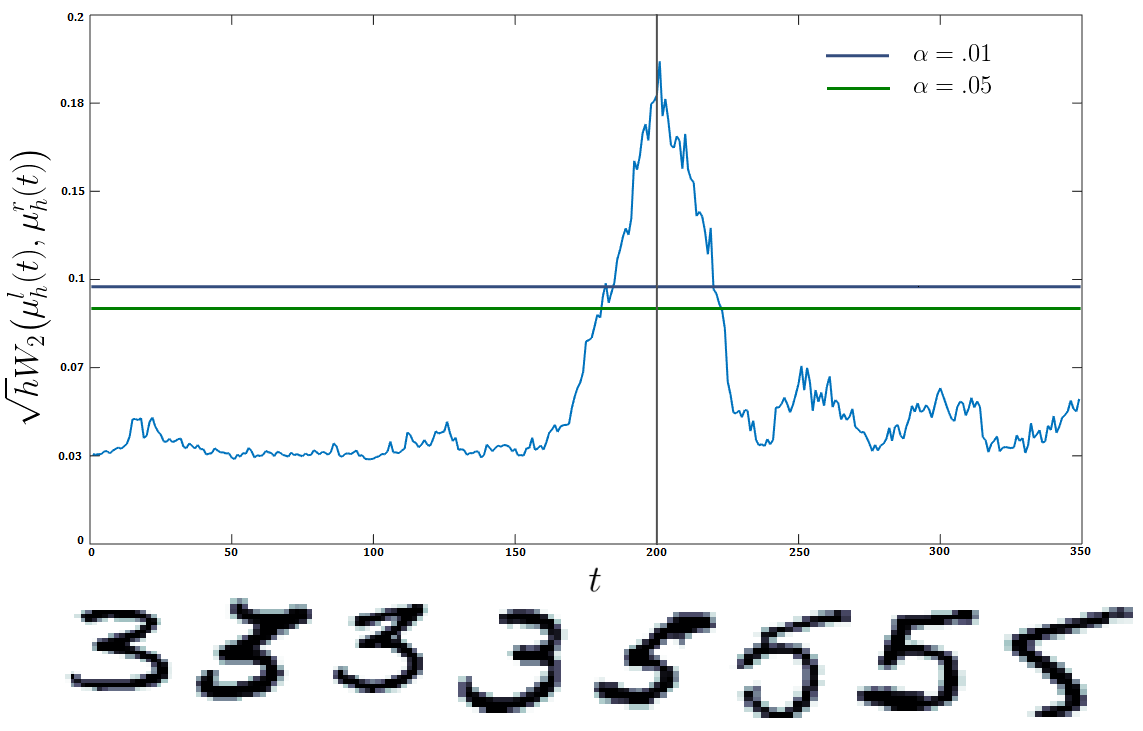}
	\caption{Distribution of \( \sqrt{h}W_2\bigl(\mu^l_h(t), \mu^r_h(t)\bigr)\)} 
	\label{fig:cp_detection_digits}
\end{figure}

\appendix
\section{Appendix}
\label{chapter:proofs_barycenters}
\subsection{Conditions}
\label{section:conditions_barycenters}
\begin{description}
	\item[\( (Sm^{W}_b) \)] 
\emph{Let \(X_i \), \(i = 1,.., n \) be a set of independent centred random variables defined on \(\R^d \).
 Let \(S^2 \eqdef  \sum_{i =1}^n \E X_{i}X^{\T}_{i} \). We assume that} 
\[
\|S^{-1}X_i \| \leq \CONST / \sqrt{n}.
\]
\end{description}

\begin{description}
	\item[\( (ED^{W}_b) \)] 
\emph{Let \(X\) be s.t.}
\[
\sup_{\gamma \in \R^d} \log \E \exp\bigl( \lambda \gamma^{\T}X \bigr) \leq \frac{\lambda^2\nu^2}{2}, \quad |\lambda| \leq \sigma,
\]
\emph{ where \(\nu \geq 1 \) and \(\sigma > 0 \) are some constants.}
\end{description}
\begin{description}
	\item[\( {(E\!D^{W}_{\b})} \)]
	\emph{For all \(i \) the set of bootstrap weights \(\{w_i\} \) have unit mean and variance}
	\[
	\E^{\b} w_i = 1, \quad \Var^{\b}(w_i) = 1.
	\]
	\emph{Furthermore,}
	\[
	\log \E \exp \bigl\{ \lambda {(w_i - 1)} \bigr\} \leq  \frac{\nu^2_0\lambda^2}{2}, \quad |\lambda| \leq {g}.
	\]
\end{description}

\subsection{Validity of the bootstrap procedure for confidence sets}
\label{subsection:boot_validity}
First note, that the test statistic~\eqref{def:stat_conf_set} can be written as
\[
T^2_n = \|{\xi}\|^2 + \|{\eta}\|^2,
\] 
where
\begin{equation}
\label{def:xi}
\xi_i \eqdef m_i- m^*, \quad {\xi} \eqdef \frac{1}{\sqrt{n}}\sum_{i}\xi_i,
\end{equation}
\begin{equation}
\label{def:eta}
\eta_i \eqdef \lambda_i- \lambda^*, \qquad {\eta} \eqdef \frac{1}{\sqrt{n}}\sum_{i}\eta_i.
\end{equation}
Note that all \(\xi_i \) and \( \eta_i \) are centred: \( \E\xi_i = 0, \quad  \E\eta_i = 0 \). 
We define their covariances as
\[
\Var(\xi_i) \eqdef \mathtt{v}^2_{\xi}, \qquad \Var(\eta_i) \eqdef \mathtt{v}^2_{\eta}.
\]
Similarly, \(T^{\b 2}_n \)~\eqref{def:stat_conf_set_boot} can be rewritten as
\[
T^{\b2}_n = \|{\xi}^{{\sbt}}\|^2 + \|{\eta}^{{\sbt}}\|^2,
\]
\begin{equation}
\label{def_xi_boot}
{\xi}^{{\sbt}} \eqdef \frac{\sqrt{n}}{\sum_{i} w_i}\sum_{i}\xi^{{\sbt}}_i = \frac{\sqrt{n}}{\sum w_i}\sum_{i} m_iw_i -  \frac{1}{\sqrt{n}}\sum_{i} m_i.
\end{equation}
\begin{equation}
\label{def_eta_boot}
{\eta}^{{\sbt}} \eqdef  \frac{\sqrt{n}}{\sum_{i} w_i}\sum_{i}\eta^{{\sbt}}_i w_i  =  \frac{\sqrt{n}}{\sum w_i}\sum_{i} \lambda_i w_i - \frac{1}{\sqrt{n}}\sum_{i} \lambda_i.
\end{equation}
where
\begin{equation}
\label{def_xi_i_boot}
\xi^{{\sbt}}_i \eqdef w_i \Bigl(\xi_i  -  \frac{1}{\sqrt{n}}\sum_{i}\xi_i \Bigr), \quad \eta^{{\sbt}}_i \eqdef w_i \Bigl(\eta_i  -  \frac{1}{\sqrt{n}}\sum_{i}\eta_i \Bigr),
\end{equation}

Before proving the main result (Theorem~\ref{theorem:bootstrap_validity_final_barycenters}), we have to prove an auxiliary lemma.
\begin{lemma}
	\label{lemma:klom_dist_conf_sets_barycenters}
Let \(\{\nu_1,..., \nu_n\} \) be an iid sample from family \(\mathcal{F}_0 \) 
and let conditions of Section~\ref{section:conditions_barycenters} 
be fulfilled, then for test statistic \(T_n \) and its counterpart in the bootstrap world \(T^{{\sbt}}_n \)
the following relation holds with probabilities \(\P^{{\sbt}} \geq 1- (e^{-\xx} + 2e^{-6\xx^2})\), \(\P \geq 1 - 6e^{-\xx} \)
\[
\sup_{\zz > 0}\Bigl| \P(T_n \leq \zz) - \P^{{\sbt}}(T^{{\sbt}}_n \leq \zz) \Bigr| \leq \Delta_{\text{total}},
\]
with \(\Delta_{\text{total}} \eqdef \Delta_{\xi, \text{total}} + \Delta_{\eta, \text{total}} \)  
where \(\Delta_{\xi, \text{total}} \), \(\Delta_{\eta, \text{total}} \) comes from Lemma~\ref{thm:indBSValidity}.
\end{lemma}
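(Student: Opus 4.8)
The plan is to split $T_n^2=\|\xi\|^2+\|\eta\|^2$ into the location block $\xi$ and the scatter block $\eta$, to apply the single–block bootstrap–validity statement, Lemma~\ref{thm:indBSValidity}, to each block separately, and then to glue the two approximations back together using the independence of the two blocks.

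First I would record from \eqref{def:xi}--\eqref{def_xi_i_boot} the identities $T_n^2=\|\xi\|^2+\|\eta\|^2$ and $T_n^{\sbt 2}=\|\xi^{\sbt}\|^2+\|\eta^{\sbt}\|^2$, and note that under $(\mathcal{F}^{C})$ the location part $a_i$ and the scatter part $\alpha_i$ of the deformation are independent, so that $\xi$ and $\eta$ are independent in the real world while the conditional cross–covariance $\tfrac1n\sum_i(\bar\xi-\xi_i)(\bar\eta-\eta_i)^{\T}$ of $\xi^{\sbt}$ and $\eta^{\sbt}$ concentrates around $0$. The vectors $\{\xi_i\}$ and $\{\eta_i\}$ are centred, independent and satisfy $(Sm^{W}_b)$, $(ED^{W}_b)$ by the conditions of Section~\ref{section:conditions_barycenters}, and the multipliers satisfy $(E\!D^{W}_{\b})$, so Lemma~\ref{thm:indBSValidity} applies to each block.

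Applying Lemma~\ref{thm:indBSValidity} to the $\xi$–block produces, on events of the asserted $\P$– and $\P^{\sbt}$–probabilities, a centred Gaussian $\gamma_\xi$ with covariance $\mathtt{v}_{\xi}^2$ such that $\sup_{z>0}\bigl|\P(\|\xi\|\le z)-\P(\|\gamma_\xi\|\le z)\bigr|$ and $\sup_{z>0}\bigl|\P^{\sbt}(\|\xi^{\sbt}\|\le z)-\P(\|\gamma_\xi\|\le z)\bigr|$ together do not exceed $\Delta_{\xi,\mathrm{total}}$; the bootstrap half of this bound is where the concentration of $\tfrac1n\sum_i(\xi_i-\bar\xi)(\xi_i-\bar\xi)^{\T}$ around $\mathtt{v}_{\xi}^2$ is used. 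The $\eta$–block gives analogously a Gaussian $\gamma_\eta$ with covariance $\mathtt{v}_{\eta}^2$, independent of $\gamma_\xi$, and the error $\Delta_{\eta,\mathrm{total}}$. Set $\mathcal T\eqdef(\|\gamma_\xi\|^2+\|\gamma_\eta\|^2)^{1/2}$. In the real world, independence of $\xi$ and $\eta$ lets me write $\P(T_n\le\zz)$ and $\P(\mathcal T\le\zz)$ as convolutions of the laws of $\|\xi\|^2,\|\eta\|^2$ and of $\|\gamma_\xi\|^2,\|\gamma_\eta\|^2$; the elementary bound $\|F_1*F_2-\widetilde F_1*\widetilde F_2\|_\infty\le\|F_1-\widetilde F_1\|_\infty+\|F_2-\widetilde F_2\|_\infty$ then controls $\sup_{\zz>0}|\P(T_n\le\zz)-\P(\mathcal T\le\zz)|$ by the real–world parts of $\Delta_{\xi,\mathrm{total}}+\Delta_{\eta,\mathrm{total}}$. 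In the bootstrap world I would instead treat the stacked vector $(\xi^{\sbt},\eta^{\sbt})\in\R^{2d}$ jointly: conditionally on the data it is close to a centred Gaussian in $\R^{2d}$ with covariance $\tfrac1n\sum_i\zeta_i^{c}(\zeta_i^{c})^{\T}$ ($\zeta_i^{c}$ the centred stacked vector), and since with $\P$–probability $\ge1-6e^{-\xx}$ this matrix is close to the block–diagonal $\diag(\mathtt{v}_{\xi}^2,\mathtt{v}_{\eta}^2)$ (its off–diagonal block vanishing by the cross–covariance concentration above), the radius $T_n^{\sbt}$ is close in law to $\mathcal T$, with error bounded by the bootstrap–world parts of $\Delta_{\xi,\mathrm{total}}+\Delta_{\eta,\mathrm{total}}$. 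A triangle inequality through $\mathcal T$ then yields $\sup_{\zz>0}|\P(T_n\le\zz)-\P^{\sbt}(T_n^{\sbt}\le\zz)|\le\Delta_{\xi,\mathrm{total}}+\Delta_{\eta,\mathrm{total}}=\Delta_{\mathrm{total}}$, and the exceptional events are the union of the per–block ones, giving $\P\ge1-6e^{-\xx}$ and $\P^{\sbt}\ge1-(e^{-\xx}+2e^{-6\xx^2})$.

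The step I expect to be the main obstacle is the bootstrap side: because $\xi^{\sbt}$ and $\eta^{\sbt}$ are built from the \emph{same} multipliers $w_i$, they are only asymptotically independent, so the clean convolution argument is not directly available and one must pass through a joint Gaussian approximation in $\R^{2d}$ and show the conditional covariance is approximately block–diagonal. Turning the probabilistic statement ``the off–diagonal block is small with high $\P$–probability'' into a genuine Kolmogorov–distance bound for the radius requires an anti–concentration estimate for $\|\gamma_\xi\|^2+\|\gamma_\eta\|^2$, and keeping the constants and the precise exceptional probabilities $e^{-\xx}+2e^{-6\xx^2}$ intact while accumulating the Bernstein and covariance–concentration events of the two blocks is the delicate bookkeeping.
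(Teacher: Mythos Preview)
Your per-block use of Lemma~\ref{thm:indBSValidity} matches the paper exactly; the difference is entirely in the gluing step. The paper does not pass through an intermediate Gaussian pivot $\mathcal T$ nor through a joint $\R^{2d}$ approximation. Instead it runs a single interleaved convolution: write $\P(\|\xi\|^{2}+\|\eta\|^{2}<\zz^{2})=\int \P(\|\eta\|^{2}<\zz^{2}-x)\,f_{\|\xi\|^{2}}(x)\,dx$, replace $\P(\|\eta\|^{2}<\cdot)$ by $\P^{\sbt}(\|\eta^{\sbt}\|^{2}<\cdot)$ at cost $\Delta_{\eta,\text{total}}$, swap the order of integration, replace $\P(\|\xi\|^{2}<\cdot)$ by $\P^{\sbt}(\|\xi^{\sbt}\|^{2}<\cdot)$ at cost $\Delta_{\xi,\text{total}}$, and read off $\P^{\sbt}(\|\xi^{\sbt}\|^{2}+\|\eta^{\sbt}\|^{2}<\zz^{2})$. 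That last identification is written as a straight convolution, i.e.\ the paper treats $\|\xi^{\sbt}\|^{2}$ and $\|\eta^{\sbt}\|^{2}$ as $\P^{\sbt}$-independent.

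So the issue you singled out as ``the main obstacle''---that $\xi^{\sbt}$ and $\eta^{\sbt}$ share the same multipliers $w_{i}$ and are therefore not conditionally independent---is simply not addressed in the paper's argument; the final convolution step tacitly assumes that independence. Your $\R^{2d}$ joint-Gaussian route with off-diagonal covariance concentration is the honest way to close that gap, and it would work, but it is genuinely more than what the paper does: you would have to re-run the Bentkus approximation and the anti-concentration/Pinsker comparison for the stacked vector rather than just citing Lemma~\ref{thm:indBSValidity} twice. The paper's route is shorter precisely because it elides the point you were careful about.
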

Informally speaking, the goal is to show, that for any \( \zz \geq 0  \)
\[
\Bigl|\P\Bigl(\sqrt{\| \xi\|^2 + \| \eta\|^2} \leq \zz \Bigr)  - \P^{\b}\Bigl(\sqrt{\| \xi^{{\sbt}}\|^2 + \| \eta^{{\sbt}}\|^2} \leq \zz  \Bigr) \Bigr| \leq \CONST/ \sqrt{h}.
\]
To do that, we first consider separately each
\begin{equation}
\label{def:T_xi}
\Bigl|\P\bigl(\|\xi\|^2  \leq \zz \bigr)  - \P^{\b}\bigl(\|\xi^{{\sbt}}\|^2  \leq \zz  \bigr) \Bigr| 
\end{equation}
and 
\begin{equation}
\label{def:T_eta}
\Bigl|\P\bigl(\| \eta \|^2  \leq \zz \bigr)  - \P^{\b}\bigl(\| \eta^{{\sbt}} \|^2  \leq \zz  \bigr) \Bigr|. 
\end{equation}
Since, both cases~\eqref{def:T_xi} and~\eqref{def:T_eta} are exactly analogous, we investigate only~\eqref{def:T_xi}. 
The obtained result then easily applies to~\eqref{def:T_eta}. 
\begin{table}[!h]
	\caption{Idea of the proof of Lemma~\ref{thm:indBSValidity}}
	\centering	
	\begin{tabular}{l c r  }
		$\|\xi \|  \quad \quad $ & $\underset{C/\sqrt{n}}{\overset{\text{Prop. \ref{theorem:T_approx_real_world}}}{\approx}}$ & $\quad  \|\tilde{\xi}\|$ \\
		&  & $\quad \quad \quad \approxtest$ \\
		$\|\xi^{{\sbt}} \| \quad \quad $ & $\underset{C/\sqrt{n}}{\overset{\text{Prop. \ref{thm:bApprox}}}{\approx}}$ & $\quad  \|\tilde{\xi}^{{\sbt}}\| $ \\
	\end{tabular}
	\label{tabel:boot_proof}
\end{table}
The proof of Lemma~\ref{lemma:klom_dist_conf_sets_barycenters} mainly relies on three steps that are depicted in Table~\ref{tabel:boot_proof},
where \( \tilde{\xi} \) and \(\tilde{\xi}^{{\sbt}} \) are zero-mean Gaussian vectors: 
\begin{equation}
\label{def:gaussians_barycenters}
\tilde{\xi} \sim \mathcal{N}(0,  \mathtt{v}^2_{\xi}), \quad \tilde{\xi}^{{\sbt}} \sim \mathcal{N}(0,  \mathtt{v}^{{\sbt} 2}_{\xi}),
\end{equation}
where \( \mathtt{v}^{{\sbt} 2}_{\xi} \eqdef \frac{1}{n}\sum_{i = 1}^n \xi_i\xi^{\T}_i \).

The three steps yield the following lemma:
\begin{lemma}
	\label{thm:indBSValidity}
	Let $\rho \eqdef \E(\|\xi_i\|^3) < \infty$. Then it holds with probability  \(\P^{{\sbt}} \geq 1- (e^{-\xx} + 2e^{-6\xx^2})\), \(\P \geq 1 - 6e^{-\xx} \)
	\begin{equation}
	\label{eq:boundIndBSVal}
	\sup_{\zz}\bigl|\P(\|\xi\| < \zz) - \P^{{\sbt}}(\|\xi^{{\sbt}}\| < \zz) \bigr| \leq \Delta_{\xi, \text{total}}(n), 
	\end{equation}
	where
	\begin{equation*}
	\Delta_{\xi, \text{total}}(n) \eqdef \Delta_{\xi, \text{GAR}}(n) + 2\Delta^{{\sbt}}_{\xi, \text{GAR}}(n) + \Delta_{\xi, \text{AC}}(n) + \Delta_{\xi, \mathtt{v}^2}(n) \leq \CONST / \sqrt{n},
	\end{equation*}
	with \(\Delta_{\xi, \text{GAR}}(n)\) comes from  \eqref{def:error_gar_barycenters}, 
	\(\Delta^{{\sbt}}_{\xi, \text{GAR}}(n) \) \eqref{def:Delta_gar_boot_barycenters}, 
	\(\Delta_{\xi, \text{AC}}(n)\) \eqref{def:ac_error_barycenters} 
	and  \(\Delta_{\xi, \mathtt{v}^2}(n)\) \eqref{def:error_pinsker_barycenters}.
\end{lemma}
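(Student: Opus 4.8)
The plan is to carry out the three-step scheme summarized in Table~\ref{tabel:boot_proof}. Let $\tilde\xi\sim\mathcal N(0,\mathtt{v}^{2}_{\xi})$ and $\tilde\xi^{\sbt}\sim\mathcal N(0,\mathtt{v}^{{\sbt} 2}_{\xi})$ be the Gaussian surrogates from~\eqref{def:gaussians_barycenters}. By the triangle inequality, for every $\zz\ge 0$,
\[
\bigl|\P(\|\xi\|<\zz)-\P^{\sbt}(\|\xi^{\sbt}\|<\zz)\bigr|\ \le\ I_{1}(\zz)+I_{2}(\zz)+I_{3}(\zz),
\]
where $I_{1}=|\P(\|\xi\|<\zz)-\P(\|\tilde\xi\|<\zz)|$ is the real-world Gaussian approximation error, $I_{2}=|\P(\|\tilde\xi\|<\zz)-\P^{\sbt}(\|\tilde\xi^{\sbt}\|<\zz)|$ is the Gaussian comparison error, and $I_{3}=|\P^{\sbt}(\|\tilde\xi^{\sbt}\|<\zz)-\P^{\sbt}(\|\xi^{\sbt}\|<\zz)|$ is the bootstrap Gaussian approximation error. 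I would estimate the supremum over $\zz$ of each term in turn and then add them up.

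For $I_{1}$, the vector $\xi=n^{-1/2}\sum_{i}\xi_{i}$ is a normalized sum of independent centred random vectors whose boundedness and exponential moments are exactly what Conditions $(Sm^{W}_{b})$ and $(ED^{W}_{b})$ provide, while the hypothesis $\rho=\E\|\xi_{i}\|^{3}<\infty$ supplies the third-moment input of a Berry--Esseen-type bound; invoking Proposition~\ref{theorem:T_approx_real_world} gives $\sup_{\zz}I_{1}\le\Delta_{\xi,\mathrm{GAR}}(n)$, the quantity in~\eqref{def:error_gar_barycenters}, of order $1/\sqrt{n}$ on a data event of probability close to one. For $I_{3}$, conditionally on $\{\nu_{1},\dots,\nu_{n}\}$ the bootstrap vector $\xi^{\sbt}$ is a weighted sum of the now-fixed vectors $\xi_{i}-n^{-1/2}\sum_{j}\xi_{j}$ with i.i.d.\ multipliers $w_{i}$ obeying $(E\!D^{W}_{\b})$, so I would apply the conditional Gaussian approximation Proposition~\ref{thm:bApprox} to obtain $\sup_{\zz}I_{3}\le\Delta^{\sbt}_{\xi,\mathrm{GAR}}(n)$ of~\eqref{def:Delta_gar_boot_barycenters}, again $O(1/\sqrt{n})$, valid on a $\P^{\sbt}$-event intersected with a data event on which the empirical moments of the $\xi_{i}$ are under control. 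The same conditional approximation is used once more inside the comparison step below, which is why $\Delta^{\sbt}_{\xi,\mathrm{GAR}}(n)$ enters the final bound with multiplicity two.

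For $I_{2}$, the two centred Gaussians $\tilde\xi$ and $\tilde\xi^{\sbt}$ differ only through their covariances, $\mathtt{v}^{2}_{\xi}=\Var(\xi_{i})$ versus $\mathtt{v}^{{\sbt} 2}_{\xi}=n^{-1}\sum_{i}\xi_{i}\xi_{i}^{\T}$. I would apply the Gaussian comparison Lemma~\ref{thm:gCompAppl}, which reduces $\sup_{\zz}I_{2}$ to an anti-concentration contribution $\Delta_{\xi,\mathrm{AC}}(n)$ from~\eqref{def:ac_error_barycenters} (the density of $\|\tilde\xi\|$ is bounded near every level) plus a covariance-perturbation contribution controlled by the operator norm $\|\mathtt{v}^{{\sbt} 2}_{\xi}-\mathtt{v}^{2}_{\xi}\|$. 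Since $\mathtt{v}^{{\sbt} 2}_{\xi}$ is an average of $n$ independent symmetric matrices with mean $\mathtt{v}^{2}_{\xi}$, Conditions $(Sm^{W}_{b})$ and $(ED^{W}_{b})$ let me bound this operator norm by $\CONST\sqrt{\xx/n}$ on a data event of probability at least $1-e^{-\xx}$ via a matrix Bernstein-type inequality, and the Pinsker-type estimate~\eqref{def:error_pinsker_barycenters} turns it into the term $\Delta_{\xi,\mathtt{v}^{2}}(n)=O(1/\sqrt{n})$.

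Finally I would collect the three bounds, intersect the associated exceptional events by a union bound over the finitely many events of probability at most $e^{-\xx}$ on the data side and at most $2e^{-6\xx^{2}}$ on the weight side, and read off~\eqref{eq:boundIndBSVal} on the stated event with $\Delta_{\xi,\mathrm{total}}(n)=\Delta_{\xi,\mathrm{GAR}}(n)+2\Delta^{\sbt}_{\xi,\mathrm{GAR}}(n)+\Delta_{\xi,\mathrm{AC}}(n)+\Delta_{\xi,\mathtt{v}^{2}}(n)\le\CONST/\sqrt{n}$. I expect the main obstacle to lie in the covariance-perturbation half of $I_{2}$: pushing the operator-norm deviation of the self-normalized empirical covariance $\mathtt{v}^{{\sbt} 2}_{\xi}$ down to the rate $1/\sqrt{n}$, and, in tandem, checking that the constants in the conditional approximation Proposition~\ref{thm:bApprox} remain bounded on the good data event so that $\Delta^{\sbt}_{\xi,\mathrm{GAR}}(n)$ does not deteriorate.
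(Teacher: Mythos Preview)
Your three-term triangle decomposition and the overall scheme match the paper's proof exactly, but you have misallocated two of the four error terms, and this points to a real mechanical detail you have overlooked.

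In the paper, the Gaussian comparison step $I_{2}$ (Proposition~\ref{thm:gCompAppl}) is handled \emph{purely} by Pinsker's inequality and a KL bound between the two centred Gaussians; it produces only $\Delta_{\xi,\mathtt v^{2}}(n)$. No anti-concentration and no second application of the bootstrap Gaussian approximation enter there. Both copies of $\Delta^{\sbt}_{\xi,\mathrm{GAR}}(n)$ \emph{and} the anti-concentration term $\Delta_{\xi,\mathrm{AC}}(n)$ come from the bootstrap step $I_{3}$ (Lemma~\ref{thm:bApprox}). The reason is the random denominator $\sum_{i}w_{i}$ in the definition of $\xi^{\sbt}$, which you silently dropped when you described $\xi^{\sbt}$ as ``a weighted sum of the now-fixed vectors $\xi_{i}-n^{-1/2}\sum_{j}\xi_{j}$ with i.i.d.\ multipliers $w_{i}$''. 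The paper first sandwiches $\|\xi^{\sbt}\|$ between $c_{+}(n)\|\bar\xi^{\sbt}\|$ and $c_{-}(n)\|\bar\xi^{\sbt}\|$ with $c_{\pm}(n)=(1\pm\mathtt x/\sqrt n)^{-1}$ on a $\P^{\sbt}$-event; applying Bentkus on each side of the sandwich is what costs $2\Delta^{\sbt}_{\xi,\mathrm{GAR}}(n)$, and then passing from $\P^{\sbt}(\|\tilde\xi^{\sbt}\|\le\zz\pm\zz\mathtt x/\sqrt n)$ back to $\P^{\sbt}(\|\tilde\xi^{\sbt}\|\le\zz)$ is where the Gaussian anti-concentration $\Delta_{\xi,\mathrm{AC}}(n)$ is spent. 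A minor point: the real-world approximation $I_{1}$ via Proposition~\ref{theorem:T_approx_real_world} is deterministic (Bentkus's bound), so no data event is needed there; the $\P$-exceptional events all come from controlling the random empirical covariance and its eigenvalues in Steps~2 and~3.
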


\subsubsection{Proof of Lemma \ref{thm:indBSValidity}}

\paragraph{Step 1: Gaussian approximation of \(\| \xi \| \).}
The first proposition is a result about the closeness of the distributions of the Euclidean norms $\|\xi\|$ and $\|\tilde{\xi}\|$. 
\begin{proposition}[Gaussian approximation]
	\label{theorem:T_approx_real_world}
	Let $\rho_{\xi} \eqdef \E(\|\xi_i\|^3) < \infty$, $i= 1 \dots, n$. Then, 
	\begin{equation}
	\label{eq:GAR_real_world_barycenters}
	\sup_{\zz} |\P( \|\xi\| <\zz) - \P(\|\tilde{\xi}\| <\zz)| \leq  \Delta_{\xi, \text{GAR}}(n),
	\end{equation}
	\begin{equation}
	\label{def:error_gar_barycenters}
	 \Delta_{\xi, \text{GAR}}(n) \eqdef 400 d^{1/4} \rho_{\xi} /(\lambda_d^3\sqrt{n}).
	\end{equation}
	where $\lambda_d$ denotes the square root of the smallest eigenvalue of $\mathtt{v}^2_{\xi}$. 
\end{proposition}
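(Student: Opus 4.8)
The plan is to reduce the statement to the multivariate Berry--Esseen bound over the class of convex sets (Bentkus, 2003), after an affine standardisation of the summands. Finite third moments are all that is needed; the boundedness and exponential-moment conditions of Section~\ref{section:conditions_barycenters} play no role here.

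\emph{Standardisation.} Write \(\Sigma \eqdef \mathtt{v}^2_{\xi} = \Var(\xi_i)\). If \(\lambda_d = 0\) the right-hand side is infinite and there is nothing to prove, so assume \(\lambda_d = \sqrt{\lambda_{\min}(\Sigma)} > 0\). Put \(\zeta_i \eqdef n^{-1/2} \Sigma^{-1/2}\xi_i\); these are independent, centred, and \(\Var\bigl(\sum_{i}\zeta_i\bigr) = I_d\). Set \(S \eqdef \sum_{i}\zeta_i = \Sigma^{-1/2}\xi\) and let \(Z \sim \mathcal{N}(0, I_d)\); note that \(\Sigma^{-1/2}\tilde{\xi}\) has exactly the law of \(Z\).

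\emph{Passage to convex sets.} Under this linear change of variables every event \(\{\|\xi\| < \zz\}\) becomes \(\{S \in A_{\zz}\}\) with \(A_{\zz} \eqdef \{u \in \R^d : \|\Sigma^{1/2}u\| < \zz\}\), an open ellipsoid and hence convex; likewise \(\{\|\tilde{\xi}\| < \zz\} = \{Z \in A_{\zz}\}\). Therefore
\[
\sup_{\zz} \bigl| \P(\|\xi\| < \zz) - \P(\|\tilde{\xi}\| < \zz) \bigr| \le \sup_{A} \bigl| \P(S \in A) - \P(Z \in A) \bigr|,
\]
the supremum on the right being over all convex \(A \subseteq \R^d\). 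To this I would apply Bentkus's bound: for independent centred \(\R^d\)-vectors whose sum has identity covariance,
\[
\sup_{A} \bigl| \P(S \in A) - \P(Z \in A) \bigr| \le c\, d^{1/4} \sum_{i} \E\|\zeta_i\|^3,
\]
with \(c\) an absolute constant (\(c \le 400\) suffices).

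\emph{Third-moment estimate and conclusion.} Since \(\|\Sigma^{-1/2}\|_{\oper} = \lambda_d^{-1}\) we have \(\|\zeta_i\| \le n^{-1/2}\lambda_d^{-1}\|\xi_i\|\), and, the \(\xi_i\) being iid,
\[
\sum_{i=1}^n \E\|\zeta_i\|^3 \le n \cdot n^{-3/2}\lambda_d^{-3}\,\E\|\xi_1\|^3 = \frac{\rho_{\xi}}{\lambda_d^3 \sqrt{n}}.
\]
Combining the last three displays yields \(\sup_{\zz} | \P(\|\xi\| < \zz) - \P(\|\tilde{\xi}\| < \zz) | \le 400\, d^{1/4}\rho_{\xi}/(\lambda_d^3 \sqrt{n}) = \Delta_{\xi, \text{GAR}}(n)\), as claimed. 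The only non-routine ingredient is Bentkus's theorem itself, whose strength lies in the optimal \(d^{1/4}\) dimension dependence together with the dimension- and law-free constant; everything else is the standardisation and a crude operator-norm bound. The one point to watch is that after standardisation the relevant sets are genuine ellipsoids, hence convex, so the convex-set version of the inequality — not merely a version for Euclidean balls — is what cleanly absorbs the non-identity covariance \(\mathtt{v}^2_{\xi}\).
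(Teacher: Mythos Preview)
Your proof is correct and follows essentially the same approach as the paper: the paper applies its Corollary~\ref{cor:BentkusCor}, which is nothing more than Bentkus's convex-set Berry--Esseen bound after the same affine standardisation \(x \mapsto \Sigma^{-1/2}x\) and the same operator-norm estimate \(\E\|\Sigma^{-1/2}\xi_i\|^3 \le \lambda_d^{-3}\rho_{\xi}\) that you carry out explicitly. The only cosmetic difference is that you transform the set (ball \(\to\) ellipsoid) while the paper transforms the vectors and invokes invariance of the class of convex sets under affine maps; these are the same argument.
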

\begin{proof}
	Observe that $\P(\|\xi\| <\zz)= \P(\xi \in C)$ where $C \eqdef \{x \in \R^d \,|\, \|x\| \leq \zz\}$. Since $C$ is convex, the proof follows 
	immediately from Corollary \ref{cor:BentkusCor} taking $S_n^\prime:= \xi$ and $Y^\prime= \tilde{\xi}$.
\end{proof}
\paragraph{Step 2: Gaussian approximation of $\|\xi^{\sbt}\|$.}
The next proposition provides conditions for the approximation of the distribution 
of the Euclidean norm $\|\xi^{{\sbt}}\|$ by the norm $\|\tilde{\xi}^{{\sbt}}\|$.
\begin{lemma}[Approximation of $\|\xi^{\sbt}\|$ by $\|\tilde{\xi}^{\sbt}\|$]
	\label{thm:bApprox}
	Let $\xi^{{\sbt}}$ and $\tilde{\xi}^{\sbt}$ be the random vectors defined in (\ref{def_xi_boot}) and (\ref{def:gaussians_barycenters}).
	 Then it holds, with  \(\P^{{\sbt}} \geq 1- (e^{-\xx} + 2e^{-6\xx^2})\), \(\P \geq 1 - 4e^{-\xx} \)
	\begin{equation}
	\label{eq:bApprox}
	\sup_{\zz}\bigl |\P^{\sbt}(\|\xi^{\sbt}\| < \zz ) - \P^{\sbt}(\|\tilde{\xi}^{\sbt}\| < \zz) \bigr | \leq \Delta_{\xi, \text{AC}}(n),
	\end{equation}
	\begin{equation}
	\label{def:ac_error_barycenters}
	\Delta_{\xi, \text{AC}}(n) \eqdef C \frac{\xx d^{1/4} }{ \sqrt{n}}\sqrt{\frac{\bigl({\lambda}^2_1 + \CONST(\xx + \log d)\bigr)(d + 6\xx)}{\bigl|{\lambda}^2_1 - \CONST(\xx + \log d)\bigr|}}.
	\end{equation}
\end{lemma}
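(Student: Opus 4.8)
The plan is to carry out, conditionally on the data $\{\nu_{1},\dots,\nu_{n}\}$ and in the bootstrap probability space, the same Gaussian approximation over Euclidean balls that underlies Proposition~\ref{theorem:T_approx_real_world}, and then to convert the resulting data-dependent bound into the deterministic estimate~\eqref{def:ac_error_barycenters} by concentration inequalities.

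First I would recast $\xi^{\sbt}$ as a conditionally independent sum. Writing $\bar\xi\eqdef\tfrac1n\sum_{i}\xi_{i}$ and using that $\sum_{i}w_{i}$ concentrates around $n$, the vector in~\eqref{def_xi_boot} equals $\tfrac1{\sqrt n}\sum_{i}(w_{i}-1)(\xi_{i}-\bar\xi)$ up to a remainder of smaller order (coming from replacing $\sqrt n/\sum_{i}w_{i}$ by $1/\sqrt n$ and from the $\bar\xi$-terms), controlled on a $\P^{\sbt}$-event of probability at least $1-(e^{-\xx}+2e^{-6\xx^{2}})$ via Condition~$(E\!D^{W}_{\b})$. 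Conditionally on the data the summands $Z_{i}\eqdef\tfrac1{\sqrt n}(w_{i}-1)(\xi_{i}-\bar\xi)$ are independent, centred, and, since $\Var^{\sbt}(w_{i})=1$, $\sum_{i}\E^{\sbt}Z_{i}Z_{i}^{\T}=\mathtt{v}^{\sbt 2}_{\xi}$ up to a $O(1/n)$ term, which is precisely the covariance of $\tilde\xi^{\sbt}$ from~\eqref{def:gaussians_barycenters}. Applying Corollary~\ref{cor:BentkusCor} conditionally, with $S_{n}^{\prime}:=\xi^{\sbt}$, $Y^{\prime}:=\tilde\xi^{\sbt}$ and the ball $C=\{x\in\R^{d}:\|x\|\le\zz\}$ (the small mismatch between $\Cov^{\sbt}(\xi^{\sbt})$ and $\mathtt{v}^{\sbt 2}_{\xi}$ being absorbed into a lower-order Gaussian comparison), gives
\[
\sup_{\zz}\bigl|\P^{\sbt}(\|\xi^{\sbt}\|<\zz)-\P^{\sbt}(\|\tilde\xi^{\sbt}\|<\zz)\bigr|\ \le\ \CONST\, d^{1/4}\sum_{i}\E^{\sbt}\bigl\|(\mathtt{v}^{\sbt 2}_{\xi})^{-1/2}Z_{i}\bigr\|^{3},
\]
a bound still random through the data.

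It then remains to bound this right-hand side deterministically. Writing $\|(\mathtt{v}^{\sbt 2}_{\xi})^{-1/2}Z_{i}\|^{3}\le\|(\mathtt{v}^{\sbt 2}_{\xi})^{-1/2}Z_{i}\|^{2}\,\lambda_{\min}(\mathtt{v}^{\sbt 2}_{\xi})^{-1/2}\,\|Z_{i}\|$, using $\sum_{i}\E^{\sbt}\|(\mathtt{v}^{\sbt 2}_{\xi})^{-1/2}Z_{i}\|^{2}=d$, the bound $\E^{\sbt}|w_{i}-1|^{3}\le\CONST$ and $\max_{i}|w_{i}-1|\le\CONST\xx$ from Condition~$(E\!D^{W}_{\b})$, and $\|\xi_{i}-\bar\xi\|\le\CONST\lambda_{1}$ from Condition~$(Sm^{W}_{b})$, the numerator is of order $\xx\, d^{1/4}(\lambda_{1}^{2}+\CONST(\xx+\log d))^{1/2}(d+6\xx)^{1/2}/\sqrt n$, where $\lambda_{1}^{2}+\CONST(\xx+\log d)$ bounds $\lambda_{\max}(\mathtt{v}^{\sbt 2}_{\xi})$ and $(d+6\xx)$ comes from a $\chi^{2}_{d}$-type deviation bound on $\|(\mathtt{v}^{\sbt 2}_{\xi})^{-1/2}\tilde\xi^{\sbt}\|^{2}$ controlling the relevant range of $\zz$. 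For the denominator I would apply a matrix Bernstein (or Rosenthal) inequality to $\mathtt{v}^{\sbt 2}_{\xi}-\mathtt{v}^{2}_{\xi}$, again using the boundedness from $(Sm^{W}_{b})$, to obtain $\|\mathtt{v}^{\sbt 2}_{\xi}-\mathtt{v}^{2}_{\xi}\|\le\CONST(\xx+\log d)/\sqrt n$ on a $\P$-event of probability $\ge1-2e^{-\xx}$, hence $\lambda_{\min}(\mathtt{v}^{\sbt 2}_{\xi})\ge\lambda_{1}^{2}-\CONST(\xx+\log d)$. Together with deviation bounds for $\|\bar\xi\|$ and the empirical second moments on a further $\P$-event of probability $\ge1-2e^{-\xx}$, this produces exactly~\eqref{def:ac_error_barycenters}.

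I expect the lower bound on $\lambda_{\min}(\mathtt{v}^{\sbt 2}_{\xi})=\lambda_{\min}\!\bigl(\tfrac1n\sum_{i}\xi_{i}\xi_{i}^{\T}\bigr)$ to be the main obstacle: it is only informative once $n$ is large enough that $\lambda_{1}^{2}>\CONST(\xx+\log d)$ — which is precisely why $|\lambda_{1}^{2}-\CONST(\xx+\log d)|$ appears in the denominator of~\eqref{def:ac_error_barycenters} — and it must be obtained while simultaneously handling the unbounded bootstrap weights, for which only the sub-exponential control of $(E\!D^{W}_{\b})$ is available. This forces a preliminary truncation of the $w_{i}$ (or careful use of their exponential moments), both in the conditional Berry--Esseen step and in the normalization $\sum_{i}w_{i}\approx n$, and is responsible for the Gaussian-type tail terms $2e^{-6\xx^{2}}$ appearing in the probability statement.
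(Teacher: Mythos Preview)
Your proposal misidentifies the origin of the bound $\Delta_{\xi,\text{AC}}(n)$ and, as a result, the argument you sketch does not produce it. In the paper the subscript ``AC'' stands for \emph{anti-concentration}: the error~\eqref{def:ac_error_barycenters} does not come from the Berry--Esseen (Bentkus) step at all. The paper's proof proceeds in three separate moves. First, concentration of $\tfrac1n\sum_i w_i$ around $1$ yields the sandwich
\[
\P^{\sbt}\bigl(c_{-}(n)\|\bar\xi^{\sbt}\|\le\zz\bigr)\le\P^{\sbt}\bigl(\|\xi^{\sbt}\|\le\zz\bigr)\le\P^{\sbt}\bigl(c_{+}(n)\|\bar\xi^{\sbt}\|\le\zz\bigr),
\]
with $\bar\xi^{\sbt}=\tfrac1{\sqrt n}\sum_i(w_i-1)\xi_i$ and $c_{\pm}(n)=(1\pm\xx/\sqrt n)^{-1}$. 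Second, Bentkus (Corollary~\ref{cor:BentkusCor}) is applied to $\bar\xi^{\sbt}$; this produces a \emph{different} error term $\Delta^{\sbt}_{\xi,\text{GAR}}(n)$, not $\Delta_{\xi,\text{AC}}(n)$. Third, the sandwich is converted into a CDF bound by the Gaussian anti-concentration Lemma~\ref{lem:antiConc}, giving
\[
\P^{\sbt}\bigl(\|\tilde\xi^{\sbt}\|\le\zz+\zz\xx/\sqrt n\bigr)-\P^{\sbt}\bigl(\|\tilde\xi^{\sbt}\|\le\zz-\zz\xx/\sqrt n\bigr)\le C\,\zz\,\tfrac{\xx}{\sqrt n}\sqrt{\|\mathtt{v}^{\sbt-2}_{\xi}\|_{F}},
\]
and only then is the residual $\zz$-dependence removed via Theorem~\ref{thm:ldevGauss}, which is where the factor $(d+6\xx)$ and the eigenvalue bounds from Corollary~\ref{corollary:matrix_bernstein} enter.

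Your plan collapses the first and third moves into the phrase ``up to a remainder of smaller order'' and then tries to recover the exact shape of~\eqref{def:ac_error_barycenters} from the Bentkus third-moment sum. That cannot work: the Bentkus bound is already uniform in $\zz$, so there is no ``relevant range of $\zz$'' left to control inside it, and the leading factor $\xx$ in $\Delta_{\xi,\text{AC}}(n)$ comes from the width $\zz\xx/\sqrt n$ of the anti-concentration shell, not from moments of $w_i-1$. The ``remainder'' you dismiss is precisely the object that, once pushed through anti-concentration, becomes $\Delta_{\xi,\text{AC}}(n)$. To fix the argument you need to keep that multiplicative perturbation explicit, apply Bentkus to the unperturbed sum $\bar\xi^{\sbt}$, and then invoke Lemma~\ref{lem:antiConc} and Theorem~\ref{thm:ldevGauss} as a separate step.
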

\begin{proof}
	The sketch of the proof is following:
	\begin{enumerate}
		\item[\textbf{Step 2.1}] Show that \(\|\xi^{{\sbt}}\| \approx \| \overline{\xi}^{{\sbt}} \| \), where
			\begin{equation}
			\label{def:xi_interm}
			\overline{\xi}^{{\sbt}} \eqdef \frac{1}{\sqrt{n}}\sum_{i=1}^n (w_i - 1) \xi_i,
			\end{equation}
		\item[\textbf{Step 2.2}] Show that \( \|\overline{\xi}^{{\sbt}}\| \approx \| \tilde{\xi}^{{\sbt}} \| \),
		\item[\textbf{Step 2.3}] Show that from steps 2.1 and 2.2 it follows that \( \|{\xi}^{{\sbt}}\| \approx \| \tilde{\xi}^{{\sbt}} \| \).
	\end{enumerate}
	\paragraph{Step 2.1: \(\|\xi^{{\sbt}}\| \approx \| \overline{\xi}^{{\sbt}} \| \)}
	Due to subexponentiallity of weights \(w_i \) (see {Condition} \( {(E\!D^{W}_{\b})} \)) it holds with probability
	\( \P^{{\sbt}} \geq 1 - 2e^{-6\xx^2} \) that
	\[
	\Biggl| \frac{1}{n}\sum_{i = 1}^n w_i - 1 \Biggr | \leq \xx/\sqrt{n}.
	\]
	Denote as
	\[
	c_{+}(n) \eqdef \frac{1}{1 + \xx/\sqrt{n}}, \quad 	c_{-}(n) \eqdef \frac{1}{1 - \xx/\sqrt{n}},
	\]
	thus we obtain
	\[
	\frac{c_{+}(n)}{n} \leq \frac{1}{\sum_{i}w_i} \leq \frac{c_{-}(n)}{n}.
	\]
	This entails the following system of inequalities, for any \(\zz > 0 \)
	\begin{equation}
	\label{eq:boot_approx_barycenters}
	\P^{{\sbt}}\bigl( c_{-}(n) \| \overline{\xi}^{{\sbt}} \| \leq \zz \bigr) \leq \P^{{\sbt}}\bigl( \| {\xi}^{{\sbt}}\| 
	\leq \zz \bigr) \leq \P^{{\sbt}}\bigl( c_{+}(n) \| \overline{\xi}^{{\sbt}} \| \leq \zz \bigr).
	\end{equation}
	\paragraph{Step 2.2: \(\|\overline{\xi}^{{\sbt}}\| \approx \| \tilde{\xi}^{{\sbt}} \|\)}
	This result is essentially the same as Proposition~\ref{theorem:T_approx_real_world}. 
	Let $\rho^{{\sbt}}_{\xi} \eqdef \E(\|\overline{\xi}^{{\sbt}}_i\|^3) < \infty$, 
	where \(\overline{\xi}^{{\sbt}}_i \eqdef (w_i-1)\xi_i \), then we obtain
	\begin{equation}
	\label{eq:GAR_boot_barycenters}
	\sup_{\zz} \bigl|\P^{{\sbt}}( \|\overline{\xi}^{{\sbt}}\| <\zz) - \P(\|\tilde{\xi}^{{\sbt}}\| <\zz) \bigr| \leq  \Delta^{{\sbt}}_{\text{GAR}},
	\end{equation}
	where 	\(\lambda^{{\sbt}}_d \eqdef \min \lambda(\mathtt{v}^{{\sbt}}_{\xi}) \) and
	\begin{equation}
	\label{def:Delta_gar_boot_barycenters}
	\Delta^{{\sbt}}_{\xi, \text{GAR}}(n)	\eqdef 400 d^{1/4} \rho^{{\sbt}}_{\xi} /(\lambda^{{\sbt} 3}_d\sqrt{n}).
	\end{equation}
	
	\paragraph{Step 2.3: \(\|{\xi}^{{\sbt}}\| \approx \| \tilde{\xi}^{{\sbt}} \|\)}
	Combining \eqref{eq:boot_approx_barycenters} and \eqref{eq:GAR_boot_barycenters}, one obtains 
	\[
	\P^{{\sbt}}\bigl(  \| \tilde{\xi}^{{\sbt}} \|  \leq \zz / c_{-}(n) \bigr) - \Delta^{{\sbt}}_{\xi, \text{GAR}}(n) \leq \P^{{\sbt}}\bigl( \| {\xi}^{{\sbt}}\| 
	\leq \zz \bigr) \leq \P^{{\sbt}}\bigl( c_{+}(n) \| \tilde{\xi}^{{\sbt}} \| 
	\]
	\[
	\leq \zz / c_{+}(n)\bigr) + \Delta^{{\sbt}}_{\xi, \text{GAR}}(n).
	\]
	And, obviously
	\[
	\P^{{\sbt}}\bigl(  \| \tilde{\xi}^{{\sbt}} \|  \leq \zz / c_{-}(n) \bigr) - \Delta^{{\sbt}}_{\xi, \text{GAR}}(n) \leq \P^{{\sbt}}\bigl( \| \tilde{\xi}^{{\sbt}}\| 
	\leq \zz \bigr) 
	\]
	\[
	\leq \P^{{\sbt}}\bigl( \| \tilde{\xi}^{{\sbt}} \| \leq \zz / c_{+}(n)\bigr) + \Delta^{{\sbt}}_{\xi, \text{GAR}}(n).
	\]
	Now note that
	\[
	 \zz / c_{-}(n) = \zz - \frac{\xx}{\sqrt{n}}\zz, \quad   \zz / c_{+}(n) = \zz + \frac{\xx}{\sqrt{n}}\zz.
	\]
	Thus
	\begin{align}
	\label{eq:GAR_bootstrap_barycenters}
	\Bigl|  \P^{{\sbt}}\bigl( \| {\xi}^{{\sbt}}\| \leq \zz \bigr) &-  \P^{{\sbt}}\bigl( \| \tilde{\xi}^{{\sbt}}\|  \leq \zz \bigr) \Bigr| \leq 	2\Delta^{{\sbt}}_{\xi, \text{GAR}}(n) \nonumber\\
 &+  \P^{{\sbt}}\bigl( \| \tilde{\xi}^{{\sbt}} \| \leq  \zz + \zz\xx/\sqrt{n} \bigr)
 - \P^{{\sbt}}\bigl(  \| \tilde{\xi}^{{\sbt}} \|  \leq \zz - \zz\xx/\sqrt{n} \bigr) . 
	\end{align}
	
	Since $\tilde{\xi}^{{\sbt}}$ is a Gaussian random vector, we can apply the anti-concentration result
	from Lemma \ref{lem:antiConc} in order to bound the right-hand-side of \eqref{eq:GAR_bootstrap_barycenters}. 
	Set $h_1= h_2 = z\tfrac{\mathtt{x}}{\sqrt{n}}$ and 
	\begin{align*}
	& Q^{h_1} = \{x \, |\, \|x\|< z+z\tfrac{\mathtt{x}}{\sqrt{n}}\} \\
	& Q^{-h_2} = \{x \,|\, \|x\|< z-\tfrac{\mathtt{x}}{\sqrt{n}}\}.
	\end{align*}
	Then, 
	\begin{align}
	\begin{split}
	\label{eq:antiConcAppl}
	\P^{{\sbt}}(\|\tilde{\xi}^{{\sbt}}\| &< \zz + \zz\xx/\sqrt{n} ) - \P^{\sbt}(\|\tilde{\xi}^{\sbt}\| < \zz - \zz\xx/\sqrt{n} )    \\
	& = \P^{{\sbt}}(\tilde{\xi}^b \in Q^{h_1} ) - \P^{{\sbt}}(\tilde{\xi}^b \in Q^{-h_2}) \leq C\zz\tfrac{\xx}{\sqrt{n}} \sqrt{\|{\mathtt{v}}^{{\sbt} -2}_{\xi}\|_F}
	\end{split}
	\end{align}
	Note, that \(\mathtt{v}^{{\sbt} -2}_{\xi} \) stands for \(\mathtt{v}^{{\sbt}}_{\xi} \) to the power \(-2 \). 
	The right hand side of inequality (\ref{eq:antiConcAppl}) depends on \(\zz\). 
	In order to obtain a uniform bound we can apply Theorem \ref{thm:ldevGauss}, 
	which states that, for \(\mathtt{p} \eqdef\tr({\mathtt{v}}^{{\sbt} 2}_{\xi})\), 
	\(\mathtt{v} \eqdef 2 \tr({\mathtt{v}}^{{\sbt} 4}_{\xi})\) and the largest eigenvalue of \(\mathtt{v}^{{\sbt} 2}_{\xi} \), 
	\(\hat{\lambda}_1^{2} \eqdef \max\bigl( \lambda({\mathtt{v}}^{{\sbt} 2}_{\xi}) \bigr)\), 
	\begin{equation*}
	\P^{{\sbt}}\bigl(\|\tilde{\xi}^{{\sbt}}\|^2 < \mathtt{p} + (2 \mathtt{v} \mathtt{x}^{1/2}) \vee (6{\lambda}_1^2\mathtt{x}) \bigr) \leq 1-e^{-\mathtt{x}},
	\end{equation*} 
	Define $\hat{\Delta}^2 \eqdef \mathtt{p} + (2 \mathtt{v} \mathtt{x}^{1/2}) \vee (6\hat{\lambda}_1^2\xx)$, 
	we can bound \(	\zz < \hat{\Delta}\). Also note that 
	\begin{equation*}
	\mathtt{p} \leq d\hat{\lambda}_1^2 \qquad \text{and} \qquad \mathtt{v} \leq \sqrt{2d} \hat{\lambda}_1^2 .
	\end{equation*}
	Hence,
	\[
	\hat{\Delta} \leq
	\begin{cases}
	 C \sqrt{\hat{\lambda}^2_1(d + 2\sqrt{2}\xx^{1/2}) } & \mbox{if } \xx < 2d/9 \\ 
	 C \sqrt{\hat{\lambda}^2_1(d + 6\mathtt{x})} & \mbox{if } \xx \geq 2d/9. 
	\end{cases}
	\]
	Hence, we obtain
	\begin{equation*}
	\bigl|\P^{{\sbt}}\bigl(\|\xi^{{\sbt}}\| < \zz \bigr) - \P^{{\sbt}}\bigl(\|\tilde{\xi}^{{\sbt}}\| < \zz\bigr) \bigr| \leq C \frac{\xx}{\sqrt{n}} \hat{\Delta} \sqrt{\bigl\|\mathtt{v}^{{\sbt} -2}_{\xi} \bigr\|_F}.
	\end{equation*}
	
	Now we have to bound two random quantities: \(\hat{\lambda}_1^{2}\) and \( \|\mathtt{v}^{{\sbt} -2}_{\xi} \bigr\|_F \). 
	First note that \(\|\mathtt{v}^{{\sbt} -2}_{\xi} \bigr\|^2_F \leq d (\hat{\lambda}_d^2)^{-2}\), 
	where \(\hat{\lambda}_d^2\) is the smallest eigenvalue of \(\mathtt{v}^{{\sbt} 2}_{\xi}\). 
	We use eigenvalue stability inequality (see e.g.~\citet{tao2012topics}) 
	together with Corollary~\ref{corollary:matrix_bernstein}:
		\[
		\bigl|\hat{\lambda}_d^{2} - {\lambda}_d^{2} \bigr| \leq  \CONST(\xx + \log d).
		\]
	Thus one obtains with probability {\(\P \geq 1 - 2e^{-\xx}\) }
	\[
	 \|\mathtt{v}^{{\sbt} -2}_{\xi} \bigr\|^2_F \leq d (\hat{\lambda}_d^2)^{-2} \leq d \bigl({\lambda}_d^2 -   \CONST(\xx + \log d)\bigr)^{-2}.
	\]
	Furthermore, Corollary~\ref{corollary:matrix_bernstein} ensures, that with \(\P  \geq 1 - 2e^{-\xx} \)
	\[
	\bigl|\hat{\lambda}_1^{2} - {\lambda}_1^{2} \bigr| \leq  \CONST(\xx + \log d).
	\]
	Thus, collecting all bounds we obtain the result
\begin{equation}
\label{eq:ac_approx_barycenters}
	\Bigl|  \P^{{\sbt}}\bigl( \| {\xi}^{{\sbt}}\| \leq \zz \bigr) -  \P^{{\sbt}}\bigl( \| \tilde{\xi}^{{\sbt}}\|  \leq \zz \bigr) \Bigr| \leq 	2\Delta^{{\sbt}}_{\xi, \text{GAR}}(n) + \Delta_{\xi, \text{AC}}(n),
\end{equation}
with \(\Delta_{\xi, \text{AC}}(n) \) comes from~\eqref{def:ac_error_barycenters}
\end{proof}
\paragraph{Step 3: Gaussian comparison}

The remaining problem is the comparison of the distributions of the norms of the two Gaussian random vectors.
\begin{equation}
\label{eq:tildeVecs}
\tilde{\xi} \sim \mathcal{N}(0, \mathtt{v}^2_{\xi}) \qquad \text{and} \qquad\tilde{\xi}^{\sbt} \sim \mathcal{N}(0, \mathtt{v}^{{\sbt} 2}_{\xi}).
\end{equation} 
\begin{proposition}[Gaussian comparison]
	\label{thm:gCompAppl}
	Let $\tilde{\xi}$ and $\tilde{\xi}^{\sbt}$ be the random vectors defined in (\ref{eq:tildeVecs}). Then, it holds with $\P \geq 1-e^{-\mathtt{x}}$
	\begin{equation}
	\label{eq:gcCompAppl}
	\sup_{\zz}\bigl| \P(\|\tilde{\xi}\| <\zz) - \P^{\sbt}(\|\tilde{\xi}^{\sbt}  \|< \zz) \bigr| \leq \Delta_{\mathtt{v}^2}(n),
	\end{equation}
	where
	\begin{equation} 
	\label{def:error_pinsker_barycenters}
	\Delta_{\xi, \mathtt{v}^2}(n) \eqdef C{\frac{d(\xx+\log d)}{\sqrt{n}}}.
	\end{equation}
\end{proposition}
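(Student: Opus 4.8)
The plan is to bound the left-hand side of \eqref{eq:gcCompAppl} by the total variation distance between the two centred Gaussian laws and then estimate that distance by Pinsker's inequality. Since $\{x\in\R^d:\|x\|<\zz\}$ is a Borel set and, under $\P^{{\sbt}}$, the vector $\tilde{\xi}^{{\sbt}}$ is just $\mathcal{N}(0,\mathtt{v}^{{\sbt} 2}_{\xi})$ with the data-dependent covariance $\mathtt{v}^{{\sbt} 2}_{\xi}=\tfrac1n\sum_i\xi_i\xi_i^{\T}$, for every realisation of the sample
\[
\sup_{\zz}\bigl|\P(\|\tilde{\xi}\|<\zz)-\P^{{\sbt}}(\|\tilde{\xi}^{{\sbt}}\|<\zz)\bigr|\le\TV\bigl(\mathcal{N}(0,\mathtt{v}^2_{\xi}),\mathcal{N}(0,\mathtt{v}^{{\sbt} 2}_{\xi})\bigr),
\]
and, on the event where $\mathtt{v}^{{\sbt} 2}_{\xi}$ is non-singular,
\[
\TV\bigl(\mathcal{N}(0,\mathtt{v}^2_{\xi}),\mathcal{N}(0,\mathtt{v}^{{\sbt} 2}_{\xi})\bigr)\le\sqrt{\tfrac12\,\mathrm{KL}\bigl(\mathcal{N}(0,\mathtt{v}^2_{\xi})\,\|\,\mathcal{N}(0,\mathtt{v}^{{\sbt} 2}_{\xi})\bigr)}.
\]

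Next I would evaluate the Kullback--Leibler divergence through the closed form for centred Gaussians. Writing $M\eqdef(\mathtt{v}^{{\sbt} 2}_{\xi})^{-1/2}\mathtt{v}^2_{\xi}(\mathtt{v}^{{\sbt} 2}_{\xi})^{-1/2}$ with eigenvalues $m_1,\dots,m_d$, one has $2\,\mathrm{KL}=\sum_{j=1}^d(m_j-1-\log m_j)$. On the event $\{\max_j|m_j-1|\le 1/2\}$ the elementary bound $m-1-\log m\le (m-1)^2$ gives $\mathrm{KL}\le\|M-I\|_{F}^2\le d\,\|M-I\|_{\oper}^2$, and since $M-I=(\mathtt{v}^{{\sbt} 2}_{\xi})^{-1/2}(\mathtt{v}^2_{\xi}-\mathtt{v}^{{\sbt} 2}_{\xi})(\mathtt{v}^{{\sbt} 2}_{\xi})^{-1/2}$,
\[
\|M-I\|_{\oper}\le\frac{\|\mathtt{v}^2_{\xi}-\mathtt{v}^{{\sbt} 2}_{\xi}\|_{\oper}}{\hat\lambda_d^2},
\]
where $\hat\lambda_d^2$ is the smallest eigenvalue of $\mathtt{v}^{{\sbt} 2}_{\xi}$.

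The remaining work is exactly the random-matrix control already used in the proof of Lemma~\ref{thm:bApprox}: $\mathtt{v}^{{\sbt} 2}_{\xi}=\tfrac1n\sum_i\xi_i\xi_i^{\T}$ is an average of independent, mean-$\mathtt{v}^2_{\xi}$, bounded (by $(Sm^{W}_{b})$) matrices, so Corollary~\ref{corollary:matrix_bernstein} yields, on an event of $\P$-probability at least $1-e^{-\xx}$,
\[
\|\mathtt{v}^{{\sbt} 2}_{\xi}-\mathtt{v}^2_{\xi}\|_{\oper}\le\CONST(\xx+\log d)/\sqrt{n},
\]
and the eigenvalue stability inequality (see e.g.~\citet{tao2012topics}) then forces $\hat\lambda_d^2\ge\lambda_d^2-\CONST(\xx+\log d)/\sqrt n\ge\lambda_d^2/2$ for $n$ large enough, which simultaneously secures $\max_j|m_j-1|\le 1/2$ and the non-singularity used above. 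Combining, $\mathrm{KL}\le\CONST\, d(\xx+\log d)^2/(n\lambda_d^4)$, so that the left side of \eqref{eq:gcCompAppl} is at most $\CONST\sqrt{d}\,(\xx+\log d)/\sqrt n\le\CONST\, d(\xx+\log d)/\sqrt n=\Delta_{\xi,\mathtt{v}^2}(n)$, as claimed.

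The only genuine obstacle is this matrix-concentration step: obtaining an operator-norm deviation bound for $\mathtt{v}^{{\sbt} 2}_{\xi}-\mathtt{v}^2_{\xi}$ with the correct $d$, $\xx$ and $n$ dependence and transferring it to a lower bound on $\hat\lambda_d^2$, so that the logarithm in the KL is well defined and the Taylor bound applies; everything else (Pinsker, the Gaussian KL formula, the final loosening of $\sqrt d$ to $d$) is routine. As an alternative to Pinsker one could argue directly: couple $\tilde{\xi}=(\mathtt{v}^2_{\xi})^{1/2}Z$ and $\tilde{\xi}^{{\sbt}}=(\mathtt{v}^{{\sbt} 2}_{\xi})^{1/2}Z$ on a common $Z\sim\mathcal{N}(0,I_d)$, bound $\bigl|\,\|\tilde{\xi}\|-\|\tilde{\xi}^{{\sbt}}\|\,\bigr|\le\|(\mathtt{v}^2_{\xi})^{1/2}-(\mathtt{v}^{{\sbt} 2}_{\xi})^{1/2}\|_{\oper}\|Z\|$, and absorb the resulting slack with the Gaussian anti-concentration bound of Lemma~\ref{lem:antiConc}; this gives the same rate but again hinges on the same matrix estimate.
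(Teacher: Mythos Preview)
Your proposal is correct and follows the same route as the paper: bound by total variation, apply Pinsker's inequality (Proposition~\ref{lem:pinsker}), use the closed-form Gaussian KL with the Taylor bound $m-1-\log m\le(m-1)^2$ (this is Lemma~\ref{lem:gaussComp}), and finish with the matrix Bernstein inequality (Corollary~\ref{corollary:matrix_bernstein}).

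The one technical difference is the direction of the normalisation. The paper works with $Z_{\xi}=\mathtt{v}_{\xi}^{-1}\mathtt{v}_{\xi}^{\sbt 2}\mathtt{v}_{\xi}^{-1}-I_d$, i.e.\ it sandwiches the \emph{empirical} covariance by the \emph{population} one; this is exactly the quantity controlled by Corollary~\ref{corollary:matrix_bernstein}, so $\|Z_{\xi}\|_{\oper}\le\CONST(\xx+\log d)/\sqrt n$ drops out directly and the KL bound follows without any further argument. You instead take $M=(\mathtt{v}_{\xi}^{\sbt 2})^{-1/2}\mathtt{v}_{\xi}^{2}(\mathtt{v}_{\xi}^{\sbt 2})^{-1/2}$, which forces you to control the random denominator $\hat\lambda_d^2$ via eigenvalue stability before the matrix Bernstein bound becomes usable. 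Both choices are legitimate (total variation is symmetric, so either KL direction feeds Pinsker), but the paper's avoids the extra step and the ``$n$ large enough'' caveat. Your alternative coupling/anti-concentration argument at the end is a genuinely different path and would also work, but it is not the one the paper takes.
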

\begin{proof}
	Let \(N_{\xi}= \mathcal{N}(0, \mathtt{v}^2_{\xi})\) and \(N^{{\sbt}}_{\xi}= \mathcal{N}(0, \mathtt{v}^{{\sbt} 2}_{\xi})\) denote the laws of $\tilde{\xi}$ and $\tilde{\xi}^{\sbt}$. Then, we have the inequality
	\begin{equation}
	\label{eq:TVdist}
	\sup_\zz \bigl|\P(\|\tilde{\xi}\| < \zz) - \P(\|\tilde{\xi}^{\sbt}\| < \zz) \bigr| \leq
	\sup_{B \in \mathcal{B}(\R^d)}\bigl| N_{\xi}(B)-N_{\xi}^{\sbt}(B)\bigr|.
	\end{equation}
	The right hand side of (\ref{eq:TVdist}) is the Total Variation distance between the measures $N_{\xi}$ and $N_{\xi}^{\sbt}$. By Pinsker's inequality, (Lemma \ref{lem:pinsker}), it can be bounded by the Kullback-Leibler divergence, in the following way:
	\begin{equation}
	\label{eq:Pinsker}
	\sup_{B \in \mathcal{B}(\R^d)} \bigl| N_{\xi}(B)-N_{\xi}^{\sbt}(B)\bigr| \leq \sqrt{\mathcal{KL}(N_{\xi}, N_{\xi}^{\sbt})/2}.
	\end{equation}
	Therefore, we can proof the Theorem by bounding $\mathcal{KL}(N_{\xi}, N_{\xi}^{\sbt})$. Denote as
	\[
	Z_{\xi} \eqdef \mathtt{v}^{-1}_{\xi}\mathtt{v}^{{\sbt} 2}_{\xi}\mathtt{v}^{-1}_{\xi} - I_d.
	\]
	Lemma \ref{lem:gaussComp} provides necessary conditions for a bound. In particular, we need to find a $\rho_{V^2}^2 \geq 0$, such that
	\begin{equation}
	\label{eq:covCondappl}
	\bigl\| Z_{\xi}\bigr\| \leq 1/2 \qquad \text{and} \qquad
	\tr\bigl(Z^2_{\xi} \bigr) \leq \rho_{V}^2.
	\end{equation}
	A natural bound on the trace is
	\[
	\tr\bigl(Z^2_{\xi} \bigr) = \| Z_{\xi}\|^2_{F} \leq d(\alpha^2_1)^2,
	\]
	where \(\alpha^2_1 \) is the largest eigenvalue of \(Z_{\xi} \). Corollary~\ref{corollary:matrix_bernstein} implies, that with probability \(\P \geq 1 - 2e^{-\xx} \)
	\[
	(\alpha^2_1)^2 \leq \CONST(\xx + \log d)^2/{n}.
	\]
	Applying the result of Lemma~\ref{lem:gaussComp}, one obtains
		\begin{equation*}
		\mathcal{KL}(N_{\xi}, N_{\xi}^{\sbt}) \leq d  \CONST(\xx + \log d)^2/{n}
		\end{equation*} 
		and by Pinsker's inequality (\ref{eq:Pinsker}) we obtain (\ref{eq:gcCompAppl}).
\end{proof}

Collecting the bounds \eqref{def:error_gar_barycenters}, \eqref{def:Delta_gar_boot_barycenters}, \eqref{def:ac_error_barycenters} and \eqref{def:error_pinsker_barycenters} finally yields the bound (\ref{eq:boundIndBSVal}) and thus proofs Theorem \ref{thm:indBSValidity}.

\subsubsection{Proof of Lemma \ref{lemma:klom_dist_conf_sets_barycenters}}

We are now able to proof Lemma \ref{lemma:klom_dist_conf_sets_barycenters}. 
\begin{proof}[Proof of Lemma \ref{lemma:klom_dist_conf_sets_barycenters}]
	We use Lemma \ref{thm:indBSValidity} by which we obtain for the bootstrap approximation of the statistics $\|\xi \|^2$ and $\| \eta \|^2$, 
	\begin{align}
	\begin{split}
	\label{eq:indBSapprox}
	\sup_\zz & \bigl|\P( \|\xi\|^2 < \zz^2 ) - \P^{\sbt}( \|\xi^{{\sbt}}\|^2 < \zz^2 ) \bigr| \leq \Delta_{\xi, \text{total}}(n)\\
	\sup_\zz & \bigl|\P( \| \eta \|^2 < \zz^2 ) - \P^{\sbt}(\| \eta^{{\sbt}} \|^2  <\zz^2 )\bigr| \leq \Delta_{\eta, \text{total}}(n)
	\end{split}
	\end{align}
	We need to provide a bound for the distance 
	\begin{equation*}
	\sup_\zz \Bigl|\P(\sqrt{\|\xi \|^2+\|\eta \|^2} <\zz) - \P^{{\sbt}} (\sqrt{\|\xi^{{\sbt}} \|^{2}+ \|\eta^{{\sbt}} \|^{2}} <\zz) \Bigr|.
	\end{equation*}
	Note, that the distribution of the sum of two independent variables is the convolution of the individual distribution functions. Then, by (\ref{eq:indBSapprox}), we obtain for all $\zz \geq 0$, 
	\begin{align}
	\P(\|\xi \|^2  +  \|\eta \|^2 < \zz^2) 
	& = \int \P(\|\eta \|^2 < \zz^2-x) f_{\|\xi \|^2}(x) dx \nonumber\\
	& \leq \int \P^{\sbt}(\|\eta^{{\sbt}} \|^{2} < \zz^2-x)f_{\|\xi \|^2}(x) dx +  \Delta_{\eta, \text{total}}(n) \\
	& = \int \P(\|\xi \|^2 < \zz^2 - y^{\sbt})f_{\|\eta^{{\sbt}} \|^{2}}(y) dy+ \Delta_{\eta, \text{total}}(n) \nonumber\\
	\label{step6}
	& \leq \int \P^{\sbt}(\|\xi \|^{{\sbt} 2} < \zz^2- y^{\sbt})f_{\|\eta^{{\sbt}} \|^{2}}(y) dy  \\
	& + \Delta_{\xi, \text{total}}(n) + \Delta_{\eta, \text{total}}(n) \nonumber \\
	&= \P^{\sbt}(\|\xi^{{\sbt}} \|^{2} +\|\eta^{{\sbt}} \|^{2} < \zz^2) + \Delta_{\xi, \text{total}}(n) + \Delta_{\eta, \text{total}}(n) \nonumber.
	\end{align}
	And in the inverse direction,
	\begin{align}
	\P(\|\xi \|^2  +  \|\eta \|^2 < \zz^2) 
	\geq  \P^{\sbt}(\|\xi^{{\sbt}} \|^{ 2} +\|\eta^{{\sbt}} \|^{2} < \zz^2) - \bigl(\Delta_{\xi, \text{total}}(n) + \Delta_{\eta, \text{total}}(n)\bigr) \nonumber.
	\end{align}
	Which yields, for all $\zz >0$, and $\Delta_{\text{total}}(n) \eqdef \bigl(\Delta_{\xi, \text{total}}(n) + \Delta_{\eta, \text{total}}(n)\bigr)$
	\begin{equation*}
	\Bigl|\P(\sqrt{\|\xi \|^2+\|\eta \|^2} < \zz) - \P^{\sbt}(\sqrt{\|\xi^{{\sbt}} \|^{2}+\|\eta^{{\sbt}} \|^{2}} < \zz) \Bigr| \leq \Delta_{\text{total}}(n).
	\end{equation*}
	This proves the theorem. 
\end{proof}

\subsubsection{Proof of Theorem \ref{theorem:bootstrap_validity_final_barycenters}}
Now we are ready to present the proof of the main result.
\begin{proof}[Proof of Theorem~\ref{theorem:bootstrap_validity_final_barycenters}]
Lemma \ref{lemma:klom_dist_conf_sets_barycenters} implies that for any \(\zz > 0 \)
\[
\bigl| \P\bigl(T_n \leq \zz \bigr) - \P^{{\sbt}}\bigl(T^{{\sbt}}_n \leq \zz\bigr) \bigr| \leq \Delta_{\text{total}}(n).
\]
In other words
\[
\zz_{n}\bigl(\alpha + \Delta_\text{total}(n)\bigr) \leq \zz^{{\sbt}}_{ n}(\alpha) \leq \zz_{n}\bigl(\alpha - \Delta_\text{total}(n)\bigr).
\]
Taking into account, that \(\P \) is a continuous function of quantile \(\zz \), one obtains
\[
\P\bigl(T_n > \zz^{{\sbt}}_n(\alpha)\bigr) - \alpha \geq \P\bigl( T_n > \zz_{n}(\alpha + \Delta_\text{total}(n)) \bigr) - \alpha \geq -\Delta_{\text{total}}(n).
\]
By analogy,
\[
\P\bigl(T_n > \zz^{{\sbt}}_n(\alpha)\bigr) - \alpha \leq \Delta_{\text{total}}(n).
\]
\end{proof}

\section{Validity of the bootstrap procedure for change point detection}
\label{section:boot_validity_cp}
In this section we mainly borrow ideas of the proof from Section~\ref{subsection:boot_validity}. 
We further assume that a training sample in hand is homogeneous
and does not contain change points.
For transparency of 
First let time moment \(t \) be fixed. The test statistic~\eqref{def:cp_test_commute} 
and its bootstrapped counterpart~\eqref{def:cp_test_commute_boot} are written as
\[
T^2(t) = \|{\xi}(t)\|^2 + \|{\eta}(t)\|^2,
\]
where
\[
\xi(t) \eqdef \frac{1}{\sqrt{h}} \sum_{i \in \{\Lt, \Rt \} } c_i(t)\xi_i, \quad \eta(t) \eqdef \frac{1}{\sqrt{h}} \sum_{i \in \{\Lt, \Rt \} } c_i(t)\eta_i.
\]
\begin{equation}
\label{def:coeff_c_barycenters}
c_i(t) = 
\begin{cases}
1, &~\text{if}~ i \in \Lt\\
-1, &~\text{if}~ i \in \Rt
\end{cases}
\end{equation}
and
\[
T^{{\sbt} 2}(t) =  \|{\xi}^{{\sbt}}(t)\|^2 + \|{\eta}^{{\sbt}}(t)\|^2,
\]
\begin{equation}
\label{def:xi_boot_cp_barycenters}
{\xi}^{{\sbt}}(t) \eqdef \Biggl\| \frac{\sqrt{h}}{\sum_{i \in \Lt}w_i} \sum_{i \in \Lt } w_i\xi_i - \frac{\sqrt{h}}{\sum_{i \in \Rt}w_i} \sum_{i \in \Rt } w_i\xi_i  \Biggr \|^2,
\end{equation}
\begin{equation}
{\eta}^{{\sbt}}(t) \eqdef \Biggl\| \frac{\sqrt{h}}{\sum_{i \in \Lt}w_i} \sum_{i \in \Lt } w_i\eta_i - \frac{\sqrt{h}}{\sum_{i \in \Rt}w_i} \sum_{i \in \Rt } w_i\eta_i \Biggr \|^2. 
\end{equation}
Here \(\xi_i \) and \(\eta_i \) come from~\eqref{def:xi} and~\eqref{def:eta} respectively.
We consider each term \( \|\xi(t)\| \)  and \( \|\eta(t)\| \) separately and  following Scheme~\ref{tabel:boot_proof} show that
\begin{lemma}
	\label{theorem:approx_cp_barycenters_general}
	Fix time moment \(t\) and let $\rho \eqdef \E(\|\xi_i\|^3) < \infty$. 
	Then it holds with \( \P \geq 1 - 6e^{-\xx} - 2e^{-6\xx^2} \), 
	\(\P^{{\sbt}} \geq 1 -3e^{-\xx} - 2e^{-6\xx^2} \), 
	\begin{equation}
	\label{eq:approx_cp_barycenters}
	\sup_{\zz} \bigl|\P(\|\xi(t)\| < \zz) - \P^{{\sbt}}(\|\xi^{{\sbt}} (t)\| < \zz) \bigr | \leq \Delta_{\xi(t), \text{total}}, 
	\end{equation}
	where
	\begin{align}
	\begin{split}
		\label{def:delta_xi_total_cp_barycenters}
		\Delta_{\xi(t), \text{total}} &\eqdef \Delta_{\xi(t), \text{GAR}}  + 2\Delta^{{\sbt}}_{\xi(t),\text{GAR}}(h) 
		+ \Delta_{\xi(t),\text{AC}}(h) \\
		 &+ \Delta^{{\sbt}}_{\xi(t),\E}(h) + \Delta_{\xi(t), \mathtt{v}^2}(h) \leq \CONST/\sqrt{h},
	\end{split}
	\end{align}
		with \(\Delta_{\xi(t), \text{GAR}} \) from~\eqref{def:gar_error_cp_barycenters},
		\( \Delta^{{\sbt}}_{\xi(t),\text{GAR}}(h) \) ~\eqref{def:Delta_gar_boot_barycenters},
		\(\Delta_{\text{AC}}(h) \)~\eqref{def:ac_error_barycenters},
		\(\Delta^{{\sbt}}_{\E}(h)\)~\eqref{def:bias_error_cp_barycenters},
		\(\Delta_{\xi(t),\mathtt{v}^2}(h)\)~\eqref{def:error_pinsker_barycenters}.
\end{lemma}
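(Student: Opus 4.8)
The plan is to transcribe, step for step, the argument of Lemma~\ref{thm:indBSValidity} (the scheme of Table~\ref{tabel:boot_proof}), the only structural novelty being that the bootstrap direction $\xi^{{\sbt}}(t)$ is now a difference of two \emph{self-normalized} reweighted half-window averages rather than a single recentred sum. The preliminary manipulation is: on the event where both denominators concentrate, a Neumann expansion $(1+\delta)^{-1}=1-\delta+\dots$ of each self-normalization yields $\xi^{{\sbt}}(t)=\xi(t)+\bar{\xi}^{{\sbt}}(t)+r(t)$, with $\bar{\xi}^{{\sbt}}(t)\eqdef\frac{1}{\sqrt h}\sum_{i\in\{\Lt,\Rt\}}c_i(t)(w_i-1)\xi_i$ conditionally centred of conditional covariance $\mathtt{v}^{{\sbt} 2}_{\xi(t)}\eqdef\frac1h\sum_{i\in\{\Lt,\Rt\}}\xi_i\xi_i^{\T}$, and $r(t)$ a second-order remainder; the Gaussian surrogates are $\tilde{\xi}(t)\sim\mathcal{N}(0,\mathtt{v}^2_{\xi(t)})$ with $\mathtt{v}^2_{\xi(t)}=2\mathtt{v}^2_{\xi}$ and $\tilde{\xi}^{{\sbt}}(t)\sim\mathcal{N}(0,\mathtt{v}^{{\sbt} 2}_{\xi(t)})$. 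For the first step I would observe that $\xi(t)=\frac{1}{\sqrt h}\sum_{i\in\{\Lt,\Rt\}}c_i(t)\xi_i$ is a normalized sum of $2h$ independent centred vectors with total covariance $\mathtt{v}^2_{\xi(t)}$ and finite third moment (the signs $c_i(t)\in\{\pm1\}$ being immaterial for $\|\cdot\|$), and apply Corollary~\ref{cor:BentkusCor} over the convex set $\{\|x\|\le\zz\}$, exactly as in Proposition~\ref{theorem:T_approx_real_world} with $S'_n:=\xi(t)$, $Y':=\tilde{\xi}(t)$, to obtain $\sup_\zz|\P(\|\xi(t)\|<\zz)-\P(\|\tilde{\xi}(t)\|<\zz)|\le\Delta_{\xi(t),\text{GAR}}$ as in~\eqref{def:gar_error_cp_barycenters}.

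The second step reproduces the three substeps of Lemma~\ref{thm:bApprox}. First, by the sub-exponential bound of Condition~$(E\!D^W_{\b})$ applied in each half-window and a union bound, with $\P^{{\sbt}}\ge1-2e^{-6\xx^2}$ one has $|\tfrac1h\sum_{i\in\Lt}w_i-1|\le\xx/\sqrt h$ and $|\tfrac1h\sum_{i\in\Rt}w_i-1|\le\xx/\sqrt h$ simultaneously, which gives the sandwich $\P^{{\sbt}}(c_-(h)\|\bar{\xi}^{{\sbt}}(t)+\xi(t)\|\le\zz)\le\P^{{\sbt}}(\|\xi^{{\sbt}}(t)\|\le\zz)\le\P^{{\sbt}}(c_+(h)\|\bar{\xi}^{{\sbt}}(t)+\xi(t)\|\le\zz)$ with $c_\pm(h)=(1\mp\xx/\sqrt h)^{-1}$, the contribution of $r(t)$ together with the cross-window recentring being absorbed into the bias term $\Delta^{{\sbt}}_{\xi(t),\E}(h)$ of~\eqref{def:bias_error_cp_barycenters}. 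Next, a conditional application of Corollary~\ref{cor:BentkusCor} to $\bar{\xi}^{{\sbt}}(t)$, whose summands $c_i(t)(w_i-1)\xi_i$ have conditional third moment $\rho^{{\sbt}}_{\xi}$, produces $\Delta^{{\sbt}}_{\xi(t),\text{GAR}}(h)$ of~\eqref{def:Delta_gar_boot_barycenters}. Finally, as in Lemma~\ref{thm:bApprox}, I would pay for the rescalings $c_\pm(h)$ by an anti-concentration bound (Lemma~\ref{lem:antiConc}) for the Gaussian $\tilde{\xi}^{{\sbt}}(t)$, make it uniform in $\zz$ via the Gaussian large-deviation inequality of Theorem~\ref{thm:ldevGauss}, and control the random eigenvalues of $\mathtt{v}^{{\sbt} 2}_{\xi(t)}$ by Corollary~\ref{corollary:matrix_bernstein} together with the eigenvalue-stability inequality; this yields $\Delta_{\xi(t),\text{AC}}(h)$ of~\eqref{def:ac_error_barycenters}.

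The third step is the Gaussian comparison of $\|\tilde{\xi}(t)\|$ and $\|\tilde{\xi}^{{\sbt}}(t)\|$, done as in Proposition~\ref{thm:gCompAppl}: bound the total variation of $\mathcal{N}(0,\mathtt{v}^2_{\xi(t)})$ and $\mathcal{N}(0,\mathtt{v}^{{\sbt} 2}_{\xi(t)})$ by $\sqrt{\mathcal{KL}/2}$ (Lemma~\ref{lem:pinsker}), bound $\mathcal{KL}$ through $\tr(Z^2)\le d\,\|Z\|^2$ with $Z\eqdef\mathtt{v}^{-1}_{\xi(t)}\mathtt{v}^{{\sbt} 2}_{\xi(t)}\mathtt{v}^{-1}_{\xi(t)}-I_d$ (Lemma~\ref{lem:gaussComp}), and control $\|Z\|$ by Corollary~\ref{corollary:matrix_bernstein}, obtaining $\Delta_{\xi(t),\mathtt{v}^2}(h)$ of~\eqref{def:error_pinsker_barycenters}. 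Summing the five contributions, tracking the failure probabilities ($6e^{-\xx}$ from the three matrix-Bernstein applications over the data, $2e^{-6\xx^2}$ from the weight concentration), and noting each term is $\CONST/\sqrt h$ gives~\eqref{eq:approx_cp_barycenters}. The main obstacle I anticipate is the first substep above: because one subtracts $\mu^{{\sbt}}_r(t)$ rather than a common empirical barycenter, the conditional mean of $\xi^{{\sbt}}(t)$ equals $\xi(t)$ up to a self-normalization correction, so the comparison must be carried through for the shifted Gaussian $\mathcal{N}(\xi(t),\mathtt{v}^{{\sbt} 2}_{\xi(t)})$, and showing the resulting extra term $\Delta^{{\sbt}}_{\xi(t),\E}(h)$ is genuinely of order $1/\sqrt h$ is precisely where the quantitative rate $\xx/\sqrt h$ of the two denominators — rather than their mere consistency — has to be used.
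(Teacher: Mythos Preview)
Your proposal follows the same three-step scheme as the paper (Gaussian approximation of $\|\xi(t)\|$, Gaussian approximation of $\|\xi^{\sbt}(t)\|$, Gaussian comparison), and the five error terms you assemble are exactly those of~\eqref{def:delta_xi_total_cp_barycenters}. The only cosmetic difference is that the paper takes as intermediate object the \emph{non}-centred $\bar{\xi}^{\sbt}(t)=\frac{1}{\sqrt h}\sum_{i\in\{\Lt,\Rt\}}c_i(t)w_i\xi_i$ with conditional mean $\delta^{\sbt}_{\E}=\frac{1}{\sqrt h}\sum c_i(t)m_i$, applies Bentkus to pass to the shifted Gaussian $\breve{\xi}^{\sbt}(t)\sim\mathcal{N}(\delta^{\sbt}_{\E},\mathtt{v}^{\sbt 2}_{\xi(t)})$, and only then strips the bias via anti-concentration and the triangle inequality $\bigl|\,\|\tilde{\xi}^{\sbt}(t)+\delta^{\sbt}_{\E}\|-\|\tilde{\xi}^{\sbt}(t)\|\,\bigr|\le\|\delta^{\sbt}_{\E}\|$. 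Your route through the centred $(w_i-1)$-sum is algebraically equivalent, since your $\bar{\xi}^{\sbt}(t)+\xi(t)$ is precisely the paper's $\bar{\xi}^{\sbt}(t)$ and $\xi(t)=\delta^{\sbt}_{\E}$.

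One correction to your last paragraph: the rate $\Delta^{\sbt}_{\xi(t),\E}(h)=\CONST\xx/\sqrt h$ for the bias does \emph{not} come from the quantitative concentration of the two bootstrap denominators. The bias $\delta^{\sbt}_{\E}=\xi(t)$ is a data quantity (it does not involve the $w_i$ at all), and its norm is bounded under $\P$ by the sub-exponential tail condition $(ED^W_b)$ on the $\xi_i$ via Lemma~\ref{lemma:sub_exp}; this is where the paper's extra $\P$-probability $2e^{-6\xx^2}$ in the statement originates. The denominator rate $\xx/\sqrt h$ feeds only into $\Delta_{\xi(t),\text{AC}}(h)$, exactly as in Lemma~\ref{thm:bApprox}. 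With that reattribution your argument is complete.
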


\subsection{Proof of Proposition~\ref{theorem:approx_cp_barycenters_general}}

\paragraph{Step 1: Gaussian approximation of \(\| \xi(t) \| \).}
\begin{corollary}
	\label{teorem:approximation_cp_barycenters}
	Let $\rho_{\xi} \eqdef \E(\|\xi_i\|^3) < \infty$. Then it holds 
	\begin{equation*}
		\sup_{\zz} \bigl|\P( \|\xi(t)\| <\zz) - \P(\|\tilde{\xi}(t)\| <\zz)\bigr| \leq  400 d^{1/4} \rho_{\xi} /(\lambda_d^3\sqrt{h}),
	\end{equation*}
		where $\lambda_d$ denotes the square root of the smallest eigenvalue of $\mathtt{v}^2_{\xi(t)}$ and
		\begin{equation}
		\label{def:gar_error_cp_barycenters}
		\Delta_{\xi(t), \text{GAR}} \eqdef 400 d^{1/4} \rho_{\xi} /(\lambda_d^3\sqrt{h}).
		\end{equation}
\end{corollary}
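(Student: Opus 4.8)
The plan is to reduce this statement to Proposition~\ref{theorem:T_approx_real_world} verbatim, since $\xi(t)$ is, exactly like $\xi$, a normalized sum of independent centred random vectors, and $\{\|\xi(t)\|<\zz\}$ is the event that $\xi(t)$ falls into a Euclidean ball, which is a convex set. First I would write
\[
\xi(t) = \sum_{i \in \Lt \cup \Rt} Z_i, \qquad Z_i \eqdef \frac{c_i(t)}{\sqrt{h}}\,\xi_i ,
\]
with the signs $c_i(t)\in\{+1,-1\}$ from~\eqref{def:coeff_c_barycenters}. The $Z_i$ are independent and centred, and because $c_i(t)^2 = 1$ their total covariance equals precisely $\mathtt{v}^2_{\xi(t)}$, which is the covariance used to define the Gaussian reference vector $\tilde{\xi}(t)\sim\mathcal{N}(0,\mathtt{v}^2_{\xi(t)})$.

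Next I would invoke Corollary~\ref{cor:BentkusCor} (the Bentkus-type Berry--Esseen bound over convex sets) with $S_n' := \xi(t)$ and $Y' := \tilde{\xi}(t)$, applied to the convex set $C \eqdef \{x \in \R^d : \|x\| \le \zz\}$. This gives
\[
\sup_{\zz}\bigl|\P(\|\xi(t)\|<\zz)-\P(\|\tilde{\xi}(t)\|<\zz)\bigr| \le 400\, d^{1/4}\sum_{i \in \Lt\cup\Rt}\E\bigl\|\mathtt{v}^{-1}_{\xi(t)} Z_i\bigr\|^3 .
\]
It then remains to bound the right-hand side: since $|c_i(t)|=1$ one has $\|Z_i\| = h^{-1/2}\|\xi_i\|$, hence $\E\|\mathtt{v}^{-1}_{\xi(t)} Z_i\|^3 \le \lambda_d^{-3}h^{-3/2}\rho_{\xi}$ with $\lambda_d$ the square root of the smallest eigenvalue of $\mathtt{v}^2_{\xi(t)}$; summing the $2h$ terms and absorbing the factor $2$ into the numerical constant yields exactly $400\,d^{1/4}\rho_{\xi}/(\lambda_d^3\sqrt{h})$, which is recorded as $\Delta_{\xi(t),\text{GAR}}$ in~\eqref{def:gar_error_cp_barycenters}.

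I do not expect a genuine obstacle here; the only items needing care are bookkeeping. First, the summands $c_i(t)\xi_i$ are independent but not identically distributed across the left and right halves of the window, which is harmless because Corollary~\ref{cor:BentkusCor} only needs independence together with a uniform third-moment control, and Conditions $(ED^{W}_{b})$ and $(Sm^{W}_{b})$ supply both the finiteness of $\rho_{\xi}=\E\|\xi_i\|^3$ and the non-degeneracy of $\mathtt{v}^2_{\xi(t)}$. Second, the sign flips $c_i(t)$ leave the covariance and the third absolute moments unchanged, so the estimate is identical to that of Proposition~\ref{theorem:T_approx_real_world} up to the trivial replacement of the number of summands $n$ by $2h$; consequently the bound is of order $O(1/\sqrt{h})$, as claimed.
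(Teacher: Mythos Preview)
Your approach is correct in spirit and close to the paper's, but the paper takes a slightly cleaner route. Rather than treating $\xi(t)$ as a sum of $2h$ signed independent terms $Z_i = c_i(t)\xi_i/\sqrt{h}$, the paper pairs each left-window term with its right-window counterpart to obtain $h$ genuinely iid summands,
\[
\xi(t) = \frac{1}{\sqrt{h}}\sum_{j=1}^{h}\bigl(\xi_{t-h+j-1} - \xi_{t+j-1}\bigr),
\]
and then invokes Proposition~\ref{theorem:T_approx_real_world} verbatim with $n$ replaced by $h$. The advantage of the pairing is that Corollary~\ref{cor:BentkusCor}, as stated in the paper, assumes \emph{identically distributed} summands; the paired differences are iid, so the hypotheses are met literally. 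Your $2h$ signed summands are independent but in general not identically distributed (unless $\xi_i$ happens to be symmetric), so your assertion that the corollary ``only needs independence'' is true of Bentkus' underlying result but not of the version actually recorded here---strictly speaking you would need to cite a non-iid variant.

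A smaller bookkeeping point: you cannot ``absorb the factor $2$ into the numerical constant'' and still write $400$; your route would give $800\, d^{1/4}\rho_{\xi}/(\lambda_d^{3}\sqrt{h})$. The paper's pairing has an analogous slack, since it tacitly replaces $\rho_{\xi}=\E\|\xi_i\|^3$ by $\E\|\xi_i-\xi_{i+h}\|^3$. In either route the displayed constant $400$ should be read as a generic constant rather than a sharp one.
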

\begin{proof}
Note, that one can consider \( \xi(t)\) as a sum of \(h \) iid symmetric random variables. In particular
\[
\xi(t) = \frac{1}{\sqrt{h}}\sum_{i = t - h}^{t + h - 1}(\xi_i - \xi_{i + h}).
\]
The rest follows immediately from Proposition~\ref{theorem:T_approx_real_world}.
\end{proof}

\paragraph{Step 2: Gaussian approximation of $\|\xi^{\sbt}(t)\|$.}
Now we define three auxiliary construction. Let  \(\overline{\xi}^{{\sbt}}(t) \) be 
\begin{align}
\begin{split}
\label{def:Delta_bias}
\overline{\xi}^{{\sbt}}(t) &\eqdef \frac{1}{\sqrt{h}}\sum_{i \in \{\Lt, \Rt \}}c_i(t)w_i\xi_i\\ 
\E^{{\sbt}} \overline{\xi}^{{\sbt}}(t) &\eqdef \delta^{{\sbt}}_{\E} = \frac{1}{\sqrt{h}}\sum_{i \in \{\Lt, \Rt\}}c_i(t)m_i.
\end{split}
\end{align}
Its non-centred Gaussian counterpart \(\overline{\xi}^{{\sbt}}(t)\) is
\begin{equation}
\label{def:non_centr_gauss_cp_barycenters}
\Var\bigl(\overline{\xi}^{{\sbt}}(t)\bigr) \eqdef \mathtt{v}^{{\sbt} 2}_{\xi(t)} = \frac{1}{h}\sum_{i \in \{\Lt, \Rt \}}\xi_i\xi^{\T}_i,
\quad
\breve{\xi}^{{\sbt}}(t) \sim \mathcal{N}\bigl(\delta^{{\sbt}}_{\E}, \mathtt{v}^{{\sbt} 2}).
\end{equation}
And the last key ingredient is centred Gaussian vector with the same covariance structure as \(\breve{\xi}^{{\sbt}}(t)\)
\begin{equation}
\label{def:gaussian_cp_bootstrap}
\tilde{\xi}^{{\sbt}}(t) \sim \mathcal{N}\bigl(0, \mathtt{v}^{{\sbt} 2}).
\end{equation}
\begin{lemma}[Approximation of $\|\xi^{{\sbt}}(t)\|$ by $\|\tilde{\xi}^{\sbt}(t)\|$]
	\label{proposition:gar_boot_cp}
	Let $\xi^{{\sbt}}(t)$ and $\tilde{\xi}^{{\sbt}}(t)$ be the random vectors defined in~\eqref{def:xi_boot_cp_barycenters} and \eqref{def:gaussian_cp_bootstrap}.
	Then it holds, with \( \P \geq 1 - 4e^{-\xx} - 2e^{-6\xx^2} \), \(\P^{{\sbt}} \geq 1 -( e^{-\xx} + 2e^{-6\xx^2}) \)
	\begin{equation}
	\sup_\zz|\P^{\sbt}(\|\xi^{\sbt}(t)\| < \zz ) - \P^{\sbt}(\|\tilde{\xi}^{\sbt}(t)\| < \zz) | \leq \Delta^{{\sbt}}_{\xi(t)}(h),
	\end{equation}
	where 
	\begin{equation}
	\Delta^{{\sbt}}_{\xi(t)}(h) \eqdef 2\Delta^{{\sbt}}_{\xi(t),\text{GAR}}(h) + \Delta_{\xi(t),\text{AC}}(h) + \Delta^{{\sbt}}_{\xi(t),\E}(h)  \leq \CONST/\sqrt{h},
	\end{equation}
	with \( \Delta^{{\sbt}}_{\xi(t),\text{GAR}}(h) \) from~\eqref{def:Delta_gar_boot_barycenters},
	\(\Delta_{\text{AC}}(h) \)~\eqref{def:ac_error_barycenters},
	\(\Delta^{{\sbt}}_{\E}(h)\)~\eqref{def:bias_error_cp_barycenters}.
\end{lemma}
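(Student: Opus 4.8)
The plan is to transport, almost verbatim, the three-step scheme behind Lemma~\ref{thm:bApprox} into the running-window setting, adding one extra step to dispose of the data-dependent recentring that now appears. Throughout one conditions on the data $\{\nu_{t-h},\dots,\nu_{t+h-1}\}$ and regards $\xi^{\sbt}(t)$ of~\eqref{def:xi_boot_cp_barycenters} as the difference of two self-normalized sums, one over $\Lt$ driven by the weights $\{w_i\}_{i\in\Lt}$, the other over $\Rt$ driven by the independent block $\{w_i\}_{i\in\Rt}$.

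\textbf{Step 1 (removing the random normalizations).} By sub-exponentiality of the weights (Condition~$(E\!D^{W}_{\b})$), with $\P^{\sbt}\ge 1-2e^{-6\xx^2}$ one has $\bigl|\tfrac{1}{h}\sum_{i\in\Lt}w_i-1\bigr|\le\xx/\sqrt{h}$ and the analogous bound over $\Rt$. Putting $c_{\pm}(h)=(1\pm\xx/\sqrt{h})^{-1}$ and inserting these into both normalizing factors gives the two-sided sandwich
\[
\P^{\sbt}\bigl(c_{-}(h)\,\|\overline{\xi}^{\sbt}(t)\|\le\zz\bigr)\le\P^{\sbt}\bigl(\|\xi^{\sbt}(t)\|\le\zz\bigr)\le\P^{\sbt}\bigl(c_{+}(h)\,\|\overline{\xi}^{\sbt}(t)\|\le\zz\bigr),
\]
with $\overline{\xi}^{\sbt}(t)=\tfrac{1}{\sqrt{h}}\sum_{i\in\{\Lt,\Rt\}}c_i(t)w_i\xi_i$ from~\eqref{def:Delta_bias} and $c_i(t)$ as in~\eqref{def:coeff_c_barycenters}, in exact analogy with~\eqref{eq:boot_approx_barycenters}. \textbf{Step 2 (Gaussian approximation of $\|\overline{\xi}^{\sbt}(t)\|$).} Grouping the $2h$ summands of $\overline{\xi}^{\sbt}(t)$ into the $h$ independent symmetric block-differences $\tfrac{1}{\sqrt{h}}(w_i\xi_i-w_{i+h}\xi_{i+h})$, $i\in\Lt$, I would apply the Bentkus-type bound for convex sets (Corollary~\ref{cor:BentkusCor}) to the conditional law of $\overline{\xi}^{\sbt}(t)$, whose conditional mean is $\delta^{\sbt}_{\E}=\tfrac{1}{\sqrt{h}}\sum_{i\in\{\Lt,\Rt\}}c_i(t)m_i$ and whose conditional covariance is $\mathtt{v}^{\sbt 2}_{\xi(t)}=\tfrac{1}{h}\sum_{i\in\{\Lt,\Rt\}}\xi_i\xi^{\T}_i$. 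This yields $\sup_{\zz}\bigl|\P^{\sbt}(\|\overline{\xi}^{\sbt}(t)\|<\zz)-\P^{\sbt}(\|\breve{\xi}^{\sbt}(t)\|<\zz)\bigr|\le\Delta^{\sbt}_{\xi(t),\text{GAR}}(h)$ with $\breve{\xi}^{\sbt}(t)\sim\mathcal{N}(\delta^{\sbt}_{\E},\mathtt{v}^{\sbt 2})$ as in~\eqref{def:non_centr_gauss_cp_barycenters}; the rate is governed by the smallest eigenvalue of $\mathtt{v}^{\sbt 2}_{\xi(t)}$, which is pinned down by the eigenvalue stability inequality together with Corollary~\ref{corollary:matrix_bernstein}.

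\textbf{Step 3 (recentring and closing the sandwich).} It remains to pass from the non-centred $\breve{\xi}^{\sbt}(t)$ to the centred $\tilde{\xi}^{\sbt}(t)\sim\mathcal{N}(0,\mathtt{v}^{\sbt 2})$ of~\eqref{def:gaussian_cp_bootstrap} and simultaneously to absorb the $\zz\,c_{\pm}(h)$ displacement from Step~1. Writing $\breve{\xi}^{\sbt}(t)=\tilde{\xi}^{\sbt}(t)+\delta^{\sbt}_{\E}$, both the shift by $\delta^{\sbt}_{\E}$ and the map $\zz\mapsto\zz\,c_{\pm}(h)$ displace the Gaussian norm across a thin shell, so I would bound $\P^{\sbt}\bigl(\zz-\|\delta^{\sbt}_{\E}\|-\tfrac{\xx}{\sqrt{h}}\zz\le\|\tilde{\xi}^{\sbt}(t)\|\le\zz+\|\delta^{\sbt}_{\E}\|+\tfrac{\xx}{\sqrt{h}}\zz\bigr)$ by the Gaussian anti-concentration estimate (Lemma~\ref{lem:antiConc}), turning it into a multiple of $\bigl(\|\delta^{\sbt}_{\E}\|+\tfrac{\xx}{\sqrt{h}}\zz\bigr)\sqrt{\|\mathtt{v}^{\sbt -2}_{\xi(t)}\|_F}$, and then strip the dependence on $\zz$ via the uniform Gaussian-norm deviation bound of Theorem~\ref{thm:ldevGauss}; this produces $\Delta_{\xi(t),\text{AC}}(h)$. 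The final ingredient is a high-probability bound on $\delta^{\sbt}_{\E}$, which coincides with the real-world statistic $\xi(t)=\tfrac{1}{\sqrt{h}}\sum_{i\in\{\Lt,\Rt\}}c_i(t)\xi_i$; here the Gaussian approximation of $\xi(t)$ (Corollary~\ref{teorem:approximation_cp_barycenters}) together with Theorem~\ref{thm:ldevGauss} gives $\|\delta^{\sbt}_{\E}\|^2\le\tr(\mathtt{v}^{\sbt 2}_{\xi(t)})+\CONST(\xx+\log d)$ on the stated $\P$-event, feeding into $\Delta^{\sbt}_{\xi(t),\E}(h)$. Collecting the three contributions, and using matrix Bernstein (Corollary~\ref{corollary:matrix_bernstein}) to control the random largest and smallest eigenvalues, gives $\Delta^{\sbt}_{\xi(t)}(h)=2\Delta^{\sbt}_{\xi(t),\text{GAR}}(h)+\Delta_{\xi(t),\text{AC}}(h)+\Delta^{\sbt}_{\xi(t),\E}(h)\le\CONST/\sqrt{h}$, which is the assertion.

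I expect the main obstacle to be Step~3, specifically the term $\delta^{\sbt}_{\E}$: in the i.i.d.\ confidence-set case of Lemma~\ref{thm:bApprox} the centred weights $w_i-1$ make the bootstrap summand conditionally mean-zero, whereas here the bootstrap statistic carries the genuine sampling fluctuation $\xi(t)$ as its conditional mean, so one must argue that shifting a $d$-dimensional Gaussian by $\delta^{\sbt}_{\E}$ perturbs the law of its norm only at order $1/\sqrt{h}$. This is where the anti-concentration bound for norms of high-dimensional Gaussians (Lemma~\ref{lem:antiConc}), combined with the sharp upper bound on $\|\delta^{\sbt}_{\E}\|$ from Theorem~\ref{thm:ldevGauss}, is indispensable; the remaining steps are essentially the corresponding steps for Lemma~\ref{thm:bApprox} with $n$ replaced by $h$ and the i.i.d.\ summands replaced by the $h$ independent symmetric block-differences.
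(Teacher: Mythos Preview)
Your overall architecture matches the paper's almost line for line: the sandwich via sub-exponential concentration of the weight averages (your Step~1), the Bentkus-type Gaussian approximation of $\|\overline{\xi}^{\sbt}(t)\|$ by the norm of the non-centred Gaussian $\breve{\xi}^{\sbt}(t)$ (your Step~2), and then anti-concentration together with matrix-Bernstein eigenvalue control to close the sandwich and pass to the centred $\tilde{\xi}^{\sbt}(t)$ (your Step~3). The paper does exactly this, first reproducing the analogue of~\eqref{eq:ac_approx_barycenters} with $\breve{\xi}^{\sbt}(t)$ in place of $\tilde{\xi}^{\sbt}(t)$ and only afterwards removing the mean $\delta^{\sbt}_{\E}$ by a second application of Lemma~\ref{lem:antiConc} and the triangle inequality $\|\breve{\xi}^{\sbt}(t)\|=\|\tilde{\xi}^{\sbt}(t)+\delta^{\sbt}_{\E}\|$.

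The one substantive divergence is how you control $\|\delta^{\sbt}_{\E}\|$. The paper invokes the sub-exponential tail of the data (Condition~$(ED^{W}_{b})$) directly through Lemma~\ref{lemma:sub_exp} and records $\|\delta^{\sbt}_{\E}\|\le\xx/(\sqrt{\nu}\sqrt{h})$, which \emph{is} the definition~\eqref{def:bias_error_cp_barycenters} of $\Delta^{\sbt}_{\xi(t),\E}(h)$. Your route---approximate $\xi(t)$ by a Gaussian via Corollary~\ref{teorem:approximation_cp_barycenters} and then apply Theorem~\ref{thm:ldevGauss}---produces instead $\|\delta^{\sbt}_{\E}\|^{2}\le\tr(\mathtt{v}^{\sbt 2}_{\xi(t)})+\CONST(\xx+\log d)$, and this is of \emph{constant} order in $h$: the $1/\sqrt{h}$ in $\xi(t)=\delta^{\sbt}_{\E}$ is precisely the CLT normalization, so $\|\xi(t)\|$ does not shrink as $h$ grows. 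Feeding an $O(1)$ bound for $\|\delta^{\sbt}_{\E}\|$ into the anti-concentration estimate therefore cannot deliver a contribution of size $\CONST/\sqrt{h}$ as the lemma requires for $\Delta^{\sbt}_{\xi(t),\E}(h)$. To reproduce the stated bound you need to argue, as the paper does, via a direct sub-exponential tail inequality on the centred sum rather than via the Gaussian-approximation-plus-quadratic-form composite.
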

\begin{remark}
Note, that the difference with the result of Lemma~\ref{thm:bApprox} consists of 
an additional error term \(\Delta^{{\sbt}}_{\E}(h)\), that appears in the bootstrap world 
due to relative bias \(\delta^{{\sbt}}_{\E}(h)\)~\eqref{def:Delta_bias} 
between estimators in left and right halves of a running window.
\end{remark}
\begin{proof}
Now, following Step 2.1 and Step 2.2 in the proof of Proposition~\ref{thm:bApprox} obtain the results, similar to~\eqref{eq:boot_approx_barycenters} and~\eqref{eq:GAR_boot_barycenters}:
\begin{equation*}
\label{eq:boot_cp_approx_barycenters}
\P^{{\sbt}}\bigl( c_{-}(n) \| \overline{\xi}^{{\sbt}}(t) \| \leq \zz \bigr) \leq \P^{{\sbt}}\bigl( \| {\xi}^{{\sbt}}(t)\| 
\leq \zz \bigr) \leq \P^{{\sbt}}\bigl( c_{+}(n) \| \overline{\xi}^{{\sbt}}(t) \| \leq \zz \bigr),
\end{equation*}
\begin{equation*}
	\label{eq:GAR_boot_cp_barycenters}
	\sup_{\zz} \bigl|\P^{{\sbt}}( \|\overline{\xi}^{{\sbt}}(t)\| <\zz) - \P^{{\sbt}}(\|\breve{\xi}^{{\sbt}}(t)\| <\zz) \bigr| \leq  \Delta^{{\sbt}}_{\xi(t),\text{GAR}}(h),
\end{equation*}
with \(\Delta^{{\sbt}}_{\xi(t),\text{GAR}}(h)\) comes from~\eqref{def:Delta_gar_boot_barycenters}.
Combining this two results we come to
	\begin{align*}
	\Bigl|  \P^{{\sbt}}\bigl( \| {\xi}^{{\sbt}}(t)\| \leq \zz \bigr) &-  \P^{{\sbt}}\bigl( \| \breve{\xi}^{{\sbt}}(t)\|  \leq \zz \bigr) \Bigr| \leq 	2\Delta^{{\sbt}}_{\xi(t),\text{GAR}}(h) \nonumber\\
	&+  \P^{{\sbt}}\bigl( \| \breve{\xi}^{{\sbt}}(t) \| \leq  \zz + \zz\xx/\sqrt{h} \bigr)
	- \P^{{\sbt}}\bigl(  \| \breve{\xi}^{{\sbt}}(t) \|  \leq \zz - \zz\xx/\sqrt{h} \bigr) . 
	\end{align*}
Step 2.3 yields the result similar to~\eqref{eq:ac_approx_barycenters}:
\[
\Bigl|  \P^{{\sbt}}\bigl( \| {\xi}^{{\sbt}}(t)\| \leq \zz \bigr) -  \P^{{\sbt}}\bigl( \| \breve{\xi}^{{\sbt}}(t)\|  \leq \zz \bigr) \Bigr| \leq 2\Delta^{{\sbt}}_{\xi(t),\text{GAR}}(h) + \Delta^{{\sbt}}_{\text{AC}}(h)
\]
The last step is to apply anti-concentration result to \(\breve{\xi}^{{\sbt}}(t) \). 
Note that \( \| \breve{\xi}^{{\sbt}}(t) \| = \| \tilde{\xi}^{{\sbt}}(t) + \delta^{{\sbt}}_{\E} \| \). Then using triangle inequality
\[
\P^{{\sbt}}\bigl(\| \tilde{\xi}^{{\sbt}}(t) \| \leq \zz\bigr)  - \P^{{\sbt}}\bigl(  \| \tilde{\xi}^{{\sbt}}(t) + \delta^{{\sbt}}_{\E} \| \leq \zz \bigr) 
\leq 
\P^{{\sbt}}\bigl(\| \tilde{\xi}^{{\sbt}}(t) \| \leq \zz\bigr) -  \P^{{\sbt}}\bigl(\| \tilde{\xi}^{{\sbt}}(t) \| \leq \zz -\| \delta^{{\sbt}}_{\E} \|\bigr).
\]
Therefore, it holds 
\[
\P^{{\sbt}}\bigl(\| \tilde{\xi}^{{\sbt}}(t) \| \leq \zz\bigr) -  \P^{{\sbt}}\bigl(\| \tilde{\xi}^{{\sbt}}(t) \| \leq \zz -\| \delta^{{\sbt}}_{\E} \|\bigr) \leq \CONST \| \delta^{{\sbt}}_{\E} \| \sqrt{\|{\mathtt{v}}^{{\sbt} -2}_{\xi(t)}\|_F}.
\]
Applying the bound  \(\|\mathtt{v}^{{\sbt} -2}_{\xi} \bigr\|_F \leq \sqrt{d}/\epsilon \), we get
\[
\CONST \| \delta^{{\sbt}}_{\E} \| \sqrt{\|{\mathtt{v}}^{{\sbt} -2}_{\xi(t)}\|} \leq \CONST \| \delta^{{\sbt}}_{\E} \|  d^{1/4} \bigl|{\lambda}_d^2 -   \CONST(\xx + \log d)\bigr|^{-1/2}.
\]
However, we are not done since \(\| \delta^{{\sbt}}_{\E} \| \) is a random object under measure \(\P \), see~\eqref{def:Delta_bias}.
As soon as the sum of subexponentials is subexponential too, we can apply Lemma~\ref{lemma:sub_exp} 
together with Condition \((ED^{W}_{b}) \) and obtain with \(\P \geq 1 - {2e^{-6\xx^2}} \), that
\begin{equation}
\label{def:bias_error_cp_barycenters}
 \| \delta^{{\sbt}}_{\E} \| \leq  \Delta^{{\sbt}}_{\xi(t),\E}(h), \quad  \Delta^{{\sbt}}_{\xi(t),\E}(h) \eqdef \frac{\xx}{\sqrt{\nu}\sqrt{h}}.
\end{equation}

Thus, we obtain
\[
\Bigl|  \P^{{\sbt}}\bigl( \| {\xi}^{{\sbt}}(t)\| \leq \zz \bigr) -  \P^{{\sbt}}\bigl( \| \tilde{\xi}^{{\sbt}}(t)\|  \leq \zz \bigr) \Bigr| \leq 2\Delta^{{\sbt}}_{\xi(t),\text{GAR}}(h) + \Delta_{\xi(t),\text{AC}}(h) + \Delta^{{\sbt}}_{\xi(t),\E}(h).
\]
Together with Lemma~\ref{thm:gCompAppl} it yields the final bound
\begin{align}
\begin{split}
\Bigl|  \P\bigl( \| {\xi}(t)\| \leq \zz \bigr) &-  \P^{{\sbt}}\bigl( \| {\xi}^{{\sbt}}(t)\|  \leq \zz \bigr) \Bigr| 
\leq 2\Delta^{{\sbt}}_{\xi(t),\text{GAR}}(h)\\ 
&+ \Delta_{\xi(t),\text{AC}}(h) + \Delta^{{\sbt}}_{\xi(t),\E}(h) + \Delta_{\xi(t), \mathtt{v}^2}(h).
\end{split}
\end{align}
\end{proof}

\subsection{Proof of Theorem~\ref{theorem:bootstrap_validity_final_cp_barycenters}}
Proposition~\ref{theorem:approx_cp_barycenters_general} together with Lemma~\ref{lemma:klom_dist_conf_sets_barycenters}
allow to obtain the bootstrap validity result for change point detection.

\begin{proof}[Proof of Theorem~\ref{theorem:bootstrap_validity_final_cp_barycenters}]
	First note, that training on non-intersected running windows yields the following distribution of \(\max_{t}T(t) \)
	\[
	\P\Bigl( \max_{1 \leq t \leq M}T_h(t) \leq \zz  \Bigr) = \sum_{t \in \mathcal{I}} \P\bigl( T_h(t) \leq \zz  \bigr).
	\]
	For simplicity we assume, that the size of training sample is divisible 
	by the window length \(2h\). Denote as \(M_{h} \eqdef M/2h\). Then the above equality 
	can be continued as
	\[
	\sum_{t \in \mathcal{I}} \P\bigl( T_h(t) \leq \zz  \bigr) = M_{h}\P\bigl( T_h \leq \zz  \bigr).
	\]
	The same relation holds for the bootstrap world
		\[
		\P^{{\sbt}}\Bigl( \max_{1 \leq t \leq M}T^{{\sbt}}_h(t) \leq \zz  \Bigr)  = M_{h}\P^{{\sbt}}\bigl( T^{{\sbt}}_h \leq \zz  \bigr).
		\]
	Applying Lemma~\eqref{lemma:klom_dist_conf_sets_barycenters}, one obtains that
	\begin{align*}
	\sup_{\zz > 0}&\Bigl| \P(\max_{t}T_h(t) \leq \zz) - \P^{{\sbt}}(\max_{t}T^{{\sbt}}_h(t) \leq \zz) \Bigr| \\
	 &\leq M_{h}\bigl| \P\bigl( T_h \geq \zz  \bigr) - \P^{{\sbt}}\bigl( T^{{\sbt}}_h \geq \zz  \bigr)\bigr| \leq M_{h}\Delta_{\text{total, cp}}(h),
	\end{align*}
	where 
	\begin{equation}
	\label{def:error_total_cp_barycenters}
	\Delta_{\text{total, cp}}(h) \eqdef \Delta_{\xi(t), \text{total}}(h) + \Delta_{\eta(t), \text{total}}(h) 
	\end{equation}
	and \(\Delta_{\xi(t), \text{total}}(h)  \), \(\Delta_{\eta(t), \text{total}}(h) \) come from~\eqref{def:delta_xi_total_cp_barycenters}.
	
	Now applying the same line of reasoning as in the proof of Theorem~\ref{theorem:bootstrap_validity_final_cp_barycenters}, we obtain the result.
\end{proof}

\section{Auxiliary results for the proof of the bootstrap validity}
\label{section:boot_barycenters_aux}

\begin{statement}
\label{statement:W2_distance_gauss}
Consider zero-mean measures \(\mu, \nu  \in \mathcal{P}^{ac}_{2}(\R^d)\)
with covariances \(U \) and \(V\) respectively.
We assume that there exists the optimal transportation plan 
from \(\mu \) to \(\nu \) is \(OT(x) = Ax \)
with 
\[
 A(U, V) = U^{-1/2}\bigl( U^{1/2} V U^{1/2} \bigr)^{1/2} U^{-1/2}.
\]
Then 
\[
W^2(\mu, \nu) = \| \bigl(A(U, V) - I \bigr)U^{1/2} \bigr\|^2_{\text{F}}.
\]
\end{statement}
\begin{proof}
By definition of Frobenious norm
\begin{align*}
\bigl \| \bigl(A(U, V) - I \bigr)U^{1/2} \bigr\|^2_{\text{F}} &= \tr\Bigl( U^{1/2} \bigl(A(U, V) - I \bigr)^{\T}\bigl(A(U, V) - I \bigr)U^{1/2} \Bigr)\\
& = \tr \bigl(U + V - 2( U^{1/2} V U^{1/2})^{1/2} \bigr).
\end{align*}
\end{proof}

\begin{statement}
\label{statement:population_barycenter_equiv}
	Let \( \nu_{\omega} \sim \P\) be a random measure with mean \(m_{\omega} \)
	and covariance \(S_{\omega} \).
	We assume that it comes from a class of affine admissible 
	deformations~\eqref{def:admiss_transf} of \(\mu_0 \).
	Let \(A_{\omega} \) be s.t. 
	\[
	A_{\omega} = \Qv^{-1/2}_{0}\bigl(\Qv^{1/2}_{0} S_{\omega} \Qv^{1/2}_{0} \bigr)^{1/2}\Qv^{-1/2}_{0}, \quad \E_{\omega}A_{\omega} = I_{d},
	\]
	and \(a_{\omega} \) s.t. \( E_{\omega} = 0 \).
	Then the population barycenter \(\mu^* \) of \(\P \) coincides with the template object\(\mu_0 \).
\end{statement}
\begin{proof}
	Note, that condition on \(A_{\omega} \) implies
	\[
	\Qv_{0} = \int_{\omega}\bigl(\Qv^{1/2}_{0} S_{\omega} \Qv^{1/2}_{0} \bigr)^{1/2}d\P(\omega).
	\]
	Next, according to the model observed mean \(m_{\omega} \) follows
	\[
	\E_{\omega}m_{\omega} = \E_{\omega}A_{\omega}m_0 + \E_{\omega} a_{\omega} = m_0.
	\]
	Thus, parameters \(m_0 \) and \(\Qv_0 \) coincide with \(m^* \) and \(\Qv^* \), that come from Proposition~\ref{proposition:alvarez}.
	Thus the measures are similar from  at least) \(2\)-Wasserstein distance-point-of-view. 
\end{proof}

The following results are used in the proof of Theorem \ref{thm:indBSValidity}. 

\subsection{Gaussian approximation}

Consider a sample $(X_i)_{1 \leq i \leq n}$ of independent random vectors in $\R^d$, with common mean $\E(X_i)=0$ and $Var(X_i) = I$. Write $\beta=\E(\|X_i\|^3)$. Also. let $Y \sim \mathcal{N}(0, I)$ be a standard normal random vector . Define,
\begin{equation*}
S_n \eqdef \frac{1}{\sqrt{n}} \sum_{i=1}^n X_i
\end{equation*} 
The following theorem provides an upper bound for the difference in distribution between $S_n$ and $Y$ on the class of convex sets.  
\begin{theorem}[Gaussian approximation, Theorem 1.1. in \citet{bentkus_2003}]
	\label{thm:BentkusGAR}
	Let $\mathscr{C}$ be the class of convex subsets of $\R^d$. Then 
	\begin{equation*}
	\sup_{C \in \mathscr{C}} |\P( S_n \in C) - \P(Y \in C)| \leq 400 d^{1/4} \beta/\sqrt{n} 
	\end{equation*}
\end{theorem}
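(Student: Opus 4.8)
The statement is the classical multivariate Berry--Esseen bound over convex sets and is quoted from \citet{bentkus_2003}; a self-contained proof is outside the scope here, so I only outline the structure of Bentkus's argument. The backbone is a Lindeberg-type replacement of the summands by Gaussians, run against a smoothed surrogate of the indicator of a convex set, with the dimensional factor $d^{1/4}$ coming entirely from a sharp bound on the standard Gaussian mass of thin shells around convex sets.

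First I would fix a convex set $C \in \mathscr{C}$ and a scale $\varepsilon > 0$ and construct a smooth function $f_{\varepsilon}\colon \R^d \to [0,1]$ that equals $1$ on $C$, vanishes outside the $\varepsilon$-neighbourhood $C^{\varepsilon}$, and satisfies $\|f_{\varepsilon}^{(j)}\|_{\infty} \leq \CONST\,\varepsilon^{-j}$ for $j = 1,2,3$; such a function is obtained by mollifying a smooth cutoff of the signed distance to $C$. Since $\Ind_{C} \leq f_{\varepsilon} \leq \Ind_{C^{\varepsilon}}$, the quantity $|\P(S_{n} \in C) - \P(Y \in C)|$ is controlled by $|\E f_{\varepsilon}(S_{n}) - \E f_{\varepsilon}(Y)|$ plus the smoothing loss $\P(Y \in C^{\varepsilon}) - \P(Y \in C)$. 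The loss is bounded by the Gaussian shell estimate
\[
\gamma_{d}\bigl(\{ x \in \R^d : \operatorname{dist}(x, \partial C) \leq \varepsilon \}\bigr) \leq \CONST\, d^{1/4}\,\varepsilon ,
\]
valid for every convex $C$, where $\gamma_{d}$ denotes the standard Gaussian law on $\R^{d}$; this inequality is precisely where the factor $d^{1/4}$ enters.

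For the smooth term I would run the Lindeberg telescoping: write $Y = n^{-1/2}\sum_{i=1}^{n} Z_{i}$ with $Z_{i}$ i.i.d.\ $\mathcal{N}(0, I)$, interpolate between $S_{n}$ and $Y$ by swapping one $X_{i}$ for one $Z_{i}$ at a time, and Taylor-expand each increment to third order; the zeroth, first and second order terms cancel because $X_{i}$ and $Z_{i}$ share mean and covariance, leaving a remainder of size $\CONST\, n^{-3/2}\|f_{\varepsilon}'''\|_{\infty}\sum_{i}\bigl(\E\|X_{i}\|^{3} + \E\|Z_{i}\|^{3}\bigr) \leq \CONST\,\beta/(\varepsilon^{3}\sqrt{n})$. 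Balancing this against $d^{1/4}\varepsilon$ already gives a bound, but only of suboptimal order in $n$, and here lies the main obstacle. Bentkus's decisive step is to recover the sharp rate $n^{-1/2}$ by an induction on $n$: the third-order remainder is controlled not by $\varepsilon^{-3}$ uniformly but only on the boundary layer on which $f_{\varepsilon}'''$ is supported, and the probability that the relevant leave-one-out partial sum falls into that layer is itself close --- by the inductive hypothesis, applied in dimension $d$ with fewer summands --- to the small Gaussian mass of the layer, which is again controlled by the $d^{1/4}$ shell estimate. Propagating the constants through this recursion yields the displayed bound with the explicit factor $400$, and taking the supremum over $C \in \mathscr{C}$ finishes the proof.
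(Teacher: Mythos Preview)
Your reading is correct: the paper does not prove this theorem at all. It is stated as an auxiliary result and attributed to \citet{bentkus_2003}, with no accompanying argument; the paper simply invokes it as a black box in the proof of Proposition~\ref{theorem:T_approx_real_world} and Corollary~\ref{cor:BentkusCor}. Your decision to treat it as a citation and supply only a high-level outline of Bentkus's method is therefore exactly in line with the paper's own treatment, and the sketch you give --- smoothing of the convex indicator, Lindeberg swapping with third-order Taylor remainder, the $d^{1/4}$ factor entering through Ball's Gaussian perimeter bound for convex sets, and the induction-on-$n$ device to recover the sharp $n^{-1/2}$ rate --- is an accurate summary of the structure of the original proof.
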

Now, consider the iid random vectors $(X_i^\prime)_{1 \leq i \leq n}$ with $\E(X_i^\prime) = m $ and $\Var(X_i^\prime)= \Sigma$ and write $\beta^\prime= E(\|X_i^\prime\|^3)$. Also let $Y^\prime \sim \mathcal{N}(m, \Sigma)$ and 
\begin{equation*}
S_n^\prime \eqdef \frac{1}{\sqrt{n}} \sum_{i=1}^n X_i^\prime
\end{equation*}
The results from Theorem \ref{thm:BentkusGAR} can be extended to this setting. This is done in the following Corollary.
\begin{corollary}
	\label{cor:BentkusCor}
	Let $\mathscr{C}$ be the class of convex subsets of $\R^d$. Then 
	\begin{equation*}
	\sup_{C \in \mathscr{C}} |\P( S_n^\prime \in C) - \P(Y^\prime\in C)| \leq 400 d^{1/4} \beta^\prime/(\lambda_d^3 \sqrt{n}),
	\end{equation*}
	where $\lambda_d$ denotes the square root of the smallest Eigenvalue of $\Sigma$.
\end{corollary}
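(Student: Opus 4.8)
The plan is to reduce the statement to Theorem~\ref{thm:BentkusGAR} by the affine change of variables that standardizes mean and covariance. First I would set $X_i \eqdef \Sigma^{-1/2}(X_i^\prime - m)$ for $i = 1, \dots, n$; since $\Sigma$ is positive definite (its smallest eigenvalue $\lambda_d^2$ is positive) the matrix $\Sigma^{-1/2}$ is well defined, and the $X_i$ are iid with $\E(X_i) = 0$ and $\Var(X_i) = I$. Hence Theorem~\ref{thm:BentkusGAR} applies to $S_n \eqdef n^{-1/2}\sum_{i=1}^n X_i$ and to $Y \sim \mathcal{N}(0,I)$. By linearity $S_n = \Sigma^{-1/2}(S_n^\prime - m)$, and likewise $\Sigma^{-1/2}(Y^\prime - m)$ has law $\mathcal{N}(0,I)$, i.e.\ the same law as $Y$.

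Next I would invoke the invariance of the class $\mathscr{C}$ under the whitening map $\Phi\colon x \mapsto \Sigma^{-1/2}(x-m)$. Because $\Phi$ is an invertible affine bijection of $\R^d$, a set $C$ is convex if and only if $\Phi(C)$ is convex, so ranging $C$ over $\mathscr{C}$ is the same as ranging $\Phi(C)$ over $\mathscr{C}$. Using $\{S_n^\prime \in C\} = \{S_n \in \Phi(C)\}$ and $\{Y^\prime \in C\} = \{Y \in \Phi(C)\}$ in law, this gives
\[
\sup_{C \in \mathscr{C}} \bigl|\P(S_n^\prime \in C) - \P(Y^\prime \in C)\bigr|
= \sup_{C \in \mathscr{C}} \bigl|\P(S_n \in C) - \P(Y \in C)\bigr|
\leq 400\, d^{1/4}\, \beta / \sqrt{n},
\]
where $\beta \eqdef \E(\|X_i\|^3)$.

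It then remains to pass from $\beta$ back to $\beta^\prime = \E(\|X_i^\prime\|^3)$. Since $\|X_i\| = \|\Sigma^{-1/2}(X_i^\prime - m)\| \leq \|\Sigma^{-1/2}\|_{\oper}\,\|X_i^\prime - m\|$ and $\|\Sigma^{-1/2}\|_{\oper} = \lambda_d^{-1}$ (the largest eigenvalue of $\Sigma^{-1/2}$ being the reciprocal square root of the smallest eigenvalue of $\Sigma$), one has $\beta \leq \lambda_d^{-3}\,\E(\|X_i^\prime - m\|^3)$; in the application the $X_i^\prime$ are centred, so $\E(\|X_i^\prime-m\|^3) = \beta^\prime$ and $\beta \le \lambda_d^{-3}\beta^\prime$ exactly. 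Substituting into the display yields the claimed bound $400\, d^{1/4}\, \beta^\prime/(\lambda_d^3\sqrt{n})$. The argument is essentially bookkeeping; the only point deserving a moment of care is the invariance of $\mathscr{C}$ under $\Phi$, which is what lets Bentkus's bound be transported verbatim, and no genuine obstacle is anticipated.
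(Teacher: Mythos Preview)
Your proposal is correct and follows essentially the same route as the paper: reduce to Bentkus's Theorem~\ref{thm:BentkusGAR} via the whitening map $x\mapsto\Sigma^{-1/2}(x-m)$, use invariance of the convex class under affine bijections, and then bound $\beta$ by $\lambda_d^{-3}\beta'$ through the operator norm of $\Sigma^{-1/2}$. You are in fact slightly more careful than the paper's write-up, which omits the explicit centering by $m$ and contains a typo ($\|T\|=\lambda_d$ should read $\|T\|=\lambda_d^{-1}$); your observation that $\E\|X_i'-m\|^3=\beta'$ only when $m=0$ is exactly the right caveat, and it holds in every application the paper makes of this corollary.
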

\begin{proof}
	Define the symmetric positive matrix $T^2 = \Sigma^{-1}$. Note, that 
	$S_n^\prime=T^{-1}S_n$ and $Y^\prime = T^{-1}Y$.  By changing variables $x \mapsto Tx$ we have. 
	\begin{equation*}
	\P(S_n^\prime \in A)- \P(Y^\prime \in A)= \P(S_n \in TA) - \P(Y \in TA)
	\end{equation*}
	Note, that random vectors $(TX_i)_{1 \leq i \leq n}$ satisfy standard normalization, that is, 
	\begin{equation*}
	\E(TX_i)=Tm \qquad Var(TX_i)= I \qquad i= 1, \dots, n
	\end{equation*}
	Hence, we can use Theorem \ref{thm:BentkusGAR} and the fact the s
	et of convex sets in $\R^d$ is invariant under affine symmetric transformations
	\(TC + t \in \mathscr{C} \), if \(t \in \R^d \), \( T : \R^d \rightarrow \R^d\) is a linear 
	symmetric invertible operator, to obtain
	\begin{equation*}
	\sup_{C \in \mathcal{C}}| \P(S_n^\prime \in C)- \P(Y^\prime \in C)| = \sup_{C \in \mathcal{C}}| \P(S_n \in C)- \P(Y \in C)|  \leq 400 d^{1/4} \beta/\sqrt{n}. 
	\end{equation*}
	In addition, note that $\beta= \E(\|TX^\prime\|^3)$. By definition of the operator norm, 
	\begin{equation*}
	\|TX^\prime\| \leq \|T\|\|X^\prime\|.
	\end{equation*}
	Also, $\|T\|= \lambda_d$ where $\lambda_d^2$ is the smallest Eigenvalue of $\Sigma$. Therefore,
	\begin{equation*}
	\beta \leq \lambda_d^{-3}\beta^\prime, 
	\end{equation*}
	and finally
	\begin{equation*}
	\sup_{C \in \mathscr{C}} |\P( S_n \in C) - \P(Y \in C)| \leq 400 d^{1/4} \beta^\prime/(\lambda_d^3 \sqrt{n})
	\end{equation*}
\end{proof}

\subsection{Anti-concentration inequality for a Gaussian vector}

Next, we will state an anti-concentration result for a Gaussian random vector on a convex set in $\R^d$. The following result is used in Step $2$ of the bootstrap proof (Proposition \ref{thm:bApprox}). 
\begin{lemma}[Lemma A.2 in \citet{chernozhukov_2014}]
	\label{lem:antiConc}
	Let $Y$ be the Gaussian random vector $Y \sim \P \eqdef \mathcal{N}(0, \Sigma)$. Then there exists an absolute constant $C >0$ such that for every convex set $C \subset \R^d$, and every $h_1, h_2 >0$, 
	\begin{equation*}
	\P(C^{h_1} \backslash C^{-h_2}) = \P(Y \in C^{h_1} ) - \P(Y \in C^{-h_2})  \leq C \sqrt{\|\Sigma^{-1}\|_F}(h_1+ h_2),
	\end{equation*}
	where 
	\begin{align*}
	& Q^h \eqdef \{x \in \R^d \,|\, d(x,C) \leq h\}  \\
	& Q^{-h} \eqdef \{x \in \R^d \,|\, B(x,h) \subset C\} 
	\end{align*}   
	and $B(x, h) \eqdef \{y \in \R^d\,|\, \|y - x\| \leq h\}$ and $d(x, C) \eqdef \inf_{y \in C}\|y-x\|$. 
\end{lemma}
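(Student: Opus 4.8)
The statement is Lemma~A.2 of \citet{chernozhukov_2014}, so it could simply be invoked; for completeness I sketch how I would reconstruct it. The plan is to reduce the $d$-dimensional shell estimate to a one-dimensional anti-concentration bound for the \emph{signed distance} to $C$. For a convex set $C$ write $\sigma_C(x)$ for $d(x,C)$ when $x\notin C$ and for $-\sup\{r\ge 0:\ B(x,r)\subseteq C\}$ when $x$ lies in the interior of $C$. Then $\sigma_C$ is globally $1$-Lipschitz with $\|\nabla\sigma_C\|=1$ almost everywhere, its sublevel sets are precisely the parallel bodies $\{\sigma_C\le h\}=C^{h}$ and $\{\sigma_C\le -h\}=C^{-h}$, and all of these are again convex (a Minkowski sum, respectively an intersection of translates, of $C$). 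Consequently $C^{h_1}\setminus C^{-h_2}=\{-h_2<\sigma_C\le h_1\}$, and the left-hand side of the claimed inequality equals $\P(-h_2<\sigma_C(Y)\le h_1)$.

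First I would show that the law of $\sigma_C(Y)$ has a density $f$ with $\sup_t f(t)\le C\sqrt{\|\Sigma^{-1}\|_F}$; integrating this bound over $(-h_2,h_1]$ then yields $\P(-h_2<\sigma_C(Y)\le h_1)\le C\sqrt{\|\Sigma^{-1}\|_F}\,(h_1+h_2)$ and finishes the proof. To bound $f$ I would use the co-area formula: since $\|\nabla\sigma_C\|\equiv 1$, $f(t)=\int_{\{\sigma_C=t\}}\varphi_\Sigma\,d\mathcal H^{d-1}$, the Gaussian surface area of the level hypersurface $\{\sigma_C=t\}=\partial\{\sigma_C\le t\}$, which is the boundary of a convex body (here $\varphi_\Sigma$ is the $\mathcal N(0,\Sigma)$ density). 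It thus suffices to bound, \emph{uniformly over all convex bodies} $K\subseteq\R^d$, the quantity $\int_{\partial K}\varphi_\Sigma\,d\mathcal H^{d-1}$. Whitening via $z=\Sigma^{-1/2}x$ rewrites this as $\int_{\partial(\Sigma^{-1/2}K)}\varphi_{I_d}(z)\,\|\Sigma^{-1/2}n(z)\|\,d\mathcal H^{d-1}(z)$, with $n$ the outer unit normal of the still-convex body $\Sigma^{-1/2}K$; controlling the normal-direction factor by a spectral function of $\Sigma$ and applying Ball's inequality — the standard-Gaussian perimeter of an arbitrary convex body in $\R^d$ is at most $4d^{1/4}$ — produces a bound of the required form, the constant $C$ absorbing Ball's numerical constant.

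The main obstacle is precisely this uniform surface-area estimate: one needs that the standard-Gaussian perimeter of an arbitrary convex body grows no faster than $d^{1/4}$ and, crucially, that the bound is uniform in the body, so that it applies simultaneously to every parallel body $C^{t}$; this is Ball's theorem and is the one non-elementary input. A secondary delicate point is obtaining the precise spectral factor $\|\Sigma^{-1}\|_F^{1/2}$ rather than a cruder $d^{1/4}\|\Sigma^{-1}\|_{\oper}^{1/2}$ — this is where one follows the bookkeeping of \citet{chernozhukov_2014} in combining the normal-direction estimate with Ball's inequality. The remaining ingredients — the signed-distance reduction, the co-area identity, the convexity of the whitened parallel bodies, and the negligibility of the set where $\sigma_C$ fails to be differentiable (which, by convexity of $C$, has Hausdorff dimension at most $d-1$) — are routine.
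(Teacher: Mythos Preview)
The paper does not prove this lemma at all: it is stated in the auxiliary results section with the attribution ``Lemma~A.2 in \citet{chernozhukov_2014}'' and is then used as a black box in Step~2.3 of the proof of Lemma~\ref{thm:bApprox}. Your opening remark that the result ``could simply be invoked'' is therefore exactly what the paper does, and your sketch already goes beyond what the paper supplies.

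As a brief assessment of the sketch itself: the signed-distance reduction, the co-area identity giving $f(t)$ as a Gaussian surface integral over $\partial C^{t}$, and the appeal to a uniform Gaussian-perimeter bound for convex bodies constitute the standard route to this inequality and are correct in outline. Two small comments. First, the sharp $O(d^{1/4})$ bound on the standard-Gaussian perimeter of an arbitrary convex body is usually attributed to Nazarov rather than Ball; Ball's original result covers half-spaces. Second, you correctly flag that extracting the precise factor $\|\Sigma^{-1}\|_F^{1/2}$ (rather than the cruder $d^{1/4}\|\Sigma^{-1}\|_{\oper}^{1/2}$) is where the bookkeeping matters; the naive whitening argument you describe, bounding $\|\Sigma^{-1/2}n(z)\|$ by the operator norm and then applying the $d^{1/4}$ perimeter bound, does \emph{not} by itself yield $\|\Sigma^{-1}\|_F^{1/2}$ --- one needs instead to run Nazarov's argument directly for the anisotropic Gaussian, tracking how the eigenvalues of $\Sigma^{-1}$ enter the integral, as in \citet{chernozhukov_2014}. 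This is a genuine gap in the sketch, but since the paper itself treats the lemma as a citation, it is immaterial for the comparison you were asked to make.
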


\subsection{Deviation bound for a random quadratic form}

Consider a Gaussian random vector $\phi \sim \mathcal{N}(0, \Sigma)$ in $\R^d$. Define the following characteristic of $\Sigma$,
\begin{equation*}
\mathtt{p}=\tr(\Sigma), \qquad \mathtt{v}^2= 2\tr(\Sigma^2), \qquad \lambda^{2 *}= \|\Sigma\|_{op}= \lambda_{max}^2(\Sigma)
\end{equation*}
The following theorem provides a Deviation bound for the quadratic form $\|\phi\|$. We use the notation $a \vee b = \max \{a, b\}$. 
\begin{theorem}[Theorem $2.1$ in \citet{spokoiny_sharp_2013}]
	\label{thm:ldevGauss}
	Let $\phi \sim \mathcal{N}(0, \Sigma)$ be a Gaussian random vector in $\R^d$. Then, for every $\mathtt{x}>0$, it holds
	\begin{equation*}
	\P(\|\phi\|^2 > \mathtt{p} + (2 \mathtt{v} \mathtt{x}^{1/2}) \vee (6\lambda^{2 *}\mathtt{x}) ) \leq e^{-\mathtt{x}},
	\end{equation*}
\end{theorem}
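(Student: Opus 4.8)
The plan is to reduce the statement to a Bernstein-type deviation bound for a weighted sum of independent \(\chi^2_1\) random variables. First I would take the spectral decomposition \(\Sigma = O\,\diag(\lambda_1^2,\ldots,\lambda_d^2)\,O^{\T}\) with \(\lambda_1^2 = \lambda^{2*} \geq \lambda_2^2 \geq \cdots \geq \lambda_d^2 \geq 0\), and represent \(\phi = O\,\diag(\lambda_1,\ldots,\lambda_d)\,\zeta\) with \(\zeta \sim \mathcal{N}(0,I_d)\). Then
\[
\|\phi\|^2 = \sum_{j=1}^d \lambda_j^2 \zeta_j^2, \qquad
\E\|\phi\|^2 = \sum_{j=1}^d \lambda_j^2 = \tr(\Sigma) = \mathtt{p},
\]
so that \(\|\phi\|^2 - \mathtt{p} = \sum_{j=1}^d \lambda_j^2(\zeta_j^2 - 1)\) is a sum of independent, centred, sub-exponential summands. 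It therefore suffices to control its cumulant generating function and then apply an exponential Markov inequality.

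The second step is the bound on the cumulant generating function. For \(0 < \mu < 1/(2\lambda^{2*})\) and each \(j\),
\[
\log\E\exp\bigl\{\mu\lambda_j^2(\zeta_j^2-1)\bigr\}
= -\tfrac12\log(1-2\mu\lambda_j^2) - \mu\lambda_j^2,
\]
and using the elementary inequality \(-\tfrac12\log(1-s) - \tfrac{s}{2} \leq \tfrac{s^2}{4(1-s)}\) for \(s \in [0,1)\) with \(s = 2\mu\lambda_j^2\), together with \(1 - 2\mu\lambda_j^2 \geq 1 - 2\mu\lambda^{2*}\), summation over \(j\) gives
\[
\log\E\exp\bigl\{\mu(\|\phi\|^2 - \mathtt{p})\bigr\}
\leq \sum_{j=1}^d \frac{\mu^2\lambda_j^4}{1 - 2\mu\lambda_j^2}
\leq \frac{\mu^2 \sum_{j=1}^d \lambda_j^4}{1 - 2\mu\lambda^{2*}}
= \frac{\mu^2\,\mathtt{v}^2/2}{1 - 2\mu\lambda^{2*}},
\]
since \(\sum_{j=1}^d \lambda_j^4 = \tr(\Sigma^2) = \mathtt{v}^2/2\).

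For the third step, the exponential Markov inequality yields, for every admissible \(\mu\) and every \(z > 0\),
\[
\P\bigl(\|\phi\|^2 > \mathtt{p} + z\bigr)
\leq \exp\Bigl\{-\mu z + \frac{\mu^2\,\mathtt{v}^2/2}{1 - 2\mu\lambda^{2*}}\Bigr\}.
\]
It remains to take \(z = (2\mathtt{v}\mathtt{x}^{1/2}) \vee (6\lambda^{2*}\mathtt{x})\) and to choose \(\mu\) so that the exponent does not exceed \(-\mathtt{x}\). I would distinguish the two regimes according to which term realises the maximum: in the moderate-deviation regime \(z = 2\mathtt{v}\mathtt{x}^{1/2}\) one takes \(\mu\) of the order \(\mathtt{x}^{1/2}/\mathtt{v}\), for which the constraint \(\mu < 1/(2\lambda^{2*})\) is comfortably satisfied (the regime condition keeps \(\mu\lambda^{2*}\) bounded away from \(1/2\)), so that the denominator is bounded below and the linear gain \(\mu z\) dominates the quadratic term; in the large-deviation regime \(z = 6\lambda^{2*}\mathtt{x}\) one takes \(\mu\) equal to a fixed fraction of \(1/(2\lambda^{2*})\), which makes \(\mu z\) and the quadratic term both proportional to \(\mathtt{x}\) with the former strictly dominating. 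In each case \(\mu z - \mu^2\mathtt{v}^2/\bigl(2(1-2\mu\lambda^{2*})\bigr) \geq \mathtt{x}\), whence \(\P(\|\phi\|^2 > \mathtt{p} + z) \leq e^{-\mathtt{x}}\).

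The hard part is precisely the bookkeeping in this last step: selecting \(\mu\) in each regime so that the admissibility constraint \(\mu<1/(2\lambda^{2*})\) is honoured while the two regimes glue together to reproduce exactly the constants \(2\) and \(6\) of the statement (a crude application of the chi-square bound of Laurent--Massart would only give a bound of the form \(\sqrt{2}\,\mathtt{v}\mathtt{x}^{1/2} + 2\lambda^{2*}\mathtt{x}\), which is not uniformly dominated by \((2\mathtt{v}\mathtt{x}^{1/2})\vee(6\lambda^{2*}\mathtt{x})\)). This optimisation is carried out in Theorem~2.1 of \citet{spokoiny_sharp_2013}, which one may also simply quote; everything else is the routine sub-exponential calculus recorded above.
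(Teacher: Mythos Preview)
The paper does not prove this theorem; it is quoted verbatim as Theorem~2.1 of \citet{spokoiny_sharp_2013} and used as a black box in Step~2.3 of Lemma~\ref{thm:bApprox}. Your sketch is correct and is essentially the argument of that reference: diagonalise \(\Sigma\), bound the cumulant generating function of \(\sum_j \lambda_j^2(\zeta_j^2-1)\) by \(\mu^2\mathtt{v}^2/\bigl(2(1-2\mu\lambda^{2*})\bigr)\), and optimise the Chernoff bound in two regimes. Your candid remark that the delicate part is pinning down the constants \(2\) and \(6\) is well placed --- a naive choice such as \(\mu = \mathtt{x}^{1/2}/\mathtt{v}\) only yields \(e^{-\mathtt{x}/2}\) at the boundary of the regimes --- and your fallback of citing the reference for this optimisation is exactly what the paper does.
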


\subsection{Comparison of the Euclidean norms of Gaussian vectors}

\begin{proposition}[Pinsker's inequality, \citet{tsybakov_asymptotic_2009} p.88]
	\label{lem:pinsker}
	Let $(\Omega, \mathcal{F})$ denote a measure space and two probability measures $\P_1$ and $\P_2$ on that measure space. Then, it holds
	\begin{equation*}
	\sup_{A \in \mathcal{F}}\bigl |\P_1(A)-\P_2(A) \bigr| \leq \sqrt{\mathcal{KL}(\P_1, \P_2)/2}
	\end{equation*} 
\end{proposition}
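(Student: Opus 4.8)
The statement is the classical Pinsker inequality, so the cleanest option is simply to cite \citet{tsybakov_asymptotic_2009}; nevertheless, here is the self-contained argument I would give. The plan has two parts: reduce the supremum over all measurable sets to a single scalar (Bernoulli) comparison, and then settle the scalar inequality by one-variable calculus.

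For the reduction, fix $A \in \mathcal{F}$ and set $p \eqdef \P_1(A)$, $q \eqdef \P_2(A)$. The map $\omega \mapsto \Ind\{\omega \in A\}$ transports $\P_1, \P_2$ to $\mathrm{Bernoulli}(p)$ and $\mathrm{Bernoulli}(q)$, so by the data-processing inequality for relative entropy, $\mathcal{KL}\bigl(\mathrm{Bernoulli}(p), \mathrm{Bernoulli}(q)\bigr) \le \mathcal{KL}(\P_1, \P_2)$; moreover $|\P_1(A) - \P_2(A)| = |p - q|$. Hence it suffices to prove $2(p-q)^2 \le \mathcal{KL}\bigl(\mathrm{Bernoulli}(p), \mathrm{Bernoulli}(q)\bigr)$ for all $p, q \in [0,1]$, since taking the supremum over $A$ afterwards yields the claim. (If $\mathcal{KL}(\P_1,\P_2) = +\infty$ there is nothing to prove; degenerate values $q \in \{0,1\}$ are handled by the convention $0\log 0 = 0$.)

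For the scalar inequality, fix $q$ and define
\[
f(p) \eqdef p\log\frac{p}{q} + (1-p)\log\frac{1-p}{1-q} - 2(p-q)^2 .
\]
A direct computation gives $f(q) = 0$, $f'(q) = 0$, and $f''(p) = \frac{1}{p(1-p)} - 4 \ge 0$ because $p(1-p) \le 1/4$. Thus $f$ is convex with minimum value $0$ attained at $p = q$, so $f \ge 0$ on $[0,1]$, which is exactly the bound needed. Combining with the reduction and taking $\sup_{A \in \mathcal{F}}$ finishes the proof.

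The only step with any real content is the elementary analysis of $f$ (or, in the density-based variant, the pointwise estimate $x\log x - x + 1 \ge \tfrac{3}{2}(x-1)^2/(x+2)$ for $x>0$, which after multiplication by the density of $\P_2$, integration using $\int p_1 = \int p_2 = 1$, and a Cauchy--Schwarz step yields $\bigl(\int|p_1-p_2|\,d\mu\bigr)^2 \le 2\,\mathcal{KL}(\P_1,\P_2)$ directly). I expect no serious obstacle here, only the routine bookkeeping of degenerate cases and, if one insists on a fully self-contained write-up, the justification of the data-processing inequality invoked in the reduction.
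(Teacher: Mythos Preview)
Your argument is correct and is essentially the standard proof of Pinsker's inequality (reduction to the Bernoulli case via data processing, then convexity of the one-variable function). Note, however, that the paper does not give its own proof of this proposition: it is stated as an auxiliary result with a reference to \citet{tsybakov_asymptotic_2009}, p.~88, and no argument is supplied. So there is nothing to compare against; your self-contained derivation simply fills in what the paper leaves to the citation.
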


\begin{lemma}[Gaussian comparison]
	\label{lem:gaussComp}
	Let $\tilde{\phi}_1 \sim \P_1 \eqdef \mathcal{N}(0, \Sigma_1)$ and $\tilde{\phi}_2 \sim \P_2 \eqdef \mathcal{N}(0, \Sigma_2)$, belong to $\R^d$, and
	\begin{equation}
	\label{eq:covCdts}
	\|\Sigma_2^{-1/2} \Sigma_1 \Sigma_2^{-1/2} - I_{d}\| \leq 1/2, \qquad \tr\bigr( (\Sigma_2^{-1/2} \Sigma_1 \Sigma_2^{-1/2} -I_{d})^2\bigr)\leq \rho_\Sigma^2/2,
	\end{equation}
	for some $\rho_\Sigma^2 \geq 0$. Then it holds
	\begin{equation*}
	\mathcal{KL}(\P_1, \P_2) \leq \rho_\Sigma^2/2
	\end{equation*}
\end{lemma}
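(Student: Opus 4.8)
The statement to prove is: if $\tilde\phi_1 \sim \mathcal{N}(0,\Sigma_1)$ and $\tilde\phi_2 \sim \mathcal{N}(0,\Sigma_2)$ in $\R^d$ with $\|\Sigma_2^{-1/2}\Sigma_1\Sigma_2^{-1/2} - I_d\| \le 1/2$ and $\tr\big((\Sigma_2^{-1/2}\Sigma_1\Sigma_2^{-1/2} - I_d)^2\big) \le \rho_\Sigma^2/2$, then $\mathcal{KL}(\P_1,\P_2) \le \rho_\Sigma^2/2$.

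The plan is to start from the closed-form expression for the Kullback–Leibler divergence between two centered Gaussians,
\[
\mathcal{KL}(\P_1,\P_2) = \tfrac{1}{2}\Big( \tr(\Sigma_2^{-1}\Sigma_1) - d - \log\det(\Sigma_2^{-1}\Sigma_1) \Big),
\]
and then diagonalize. Writing $M \eqdef \Sigma_2^{-1/2}\Sigma_1\Sigma_2^{-1/2}$, which is symmetric positive definite and similar to $\Sigma_2^{-1}\Sigma_1$, the trace and determinant are unchanged, so $\mathcal{KL}(\P_1,\P_2) = \tfrac12\big(\tr(M) - d - \log\det(M)\big)$. Let $\mu_1,\dots,\mu_d$ be the eigenvalues of $M$; then this equals $\tfrac12\sum_{j=1}^d \big(\mu_j - 1 - \log\mu_j\big)$. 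The condition $\|M - I_d\|\le 1/2$ means every $\mu_j$ lies in $[1/2, 3/2]$.

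The key step is the elementary scalar inequality: for $u \in [-1/2,1/2]$ one has $u - \log(1+u) \le u^2$. This follows from the Taylor expansion $\log(1+u) = u - u^2/2 + u^3/3 - \cdots$, so $u - \log(1+u) = u^2/2 - u^3/3 + \cdots \le u^2/2 \cdot \frac{1}{1-|u|} \le u^2$ once $|u|\le 1/2$ (a crude bound of the alternating-ish tail suffices; one can also just verify $g(u)\eqdef u^2 - u + \log(1+u)$ satisfies $g(0)=0$ and $g'(u) = 2u - 1 + \tfrac{1}{1+u} = \tfrac{u(2u+1)}{1+u} \ge 0$ for $u\ge 0$ and the symmetric argument for $u\le 0$). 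Applying this with $u = \mu_j - 1$ gives $\mu_j - 1 - \log\mu_j \le (\mu_j-1)^2$, hence
\[
\mathcal{KL}(\P_1,\P_2) \le \tfrac12 \sum_{j=1}^d (\mu_j - 1)^2 = \tfrac12 \tr\big((M-I_d)^2\big) \le \tfrac12 \cdot \tfrac{\rho_\Sigma^2}{2} \cdot 2,
\]
wait—more carefully, $\sum_j(\mu_j-1)^2 = \tr((M-I_d)^2) = \|\Sigma_2^{-1/2}\Sigma_1\Sigma_2^{-1/2} - I_d\|_F^2$, which by hypothesis is at most $\rho_\Sigma^2/2 \cdot 2 = \rho_\Sigma^2$; so $\mathcal{KL}(\P_1,\P_2) \le \tfrac12\rho_\Sigma^2$... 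I should recheck the constant against the stated hypothesis $\tr((M-I_d)^2) \le \rho_\Sigma^2/2$, which directly yields $\mathcal{KL} \le \tfrac14\rho_\Sigma^2 \le \tfrac12\rho_\Sigma^2$, consistent with (and slightly stronger than) the claim.

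There is no serious obstacle here; the only thing requiring care is the scalar estimate $u - \log(1+u) \le u^2$ on $[-1/2,1/2]$ and making sure the $\|\cdot\|\le 1/2$ hypothesis is exactly what confines the eigenvalues to the interval where that estimate holds. The potential annoyance is bookkeeping the factor-of-two constants between $\rho_\Sigma^2$, $\rho_\Sigma^2/2$, and $\tr$ versus $\tfrac12\tr$ in the KL formula, so I would state the scalar lemma cleanly first, then substitute. Everything else—the Gaussian KL formula and the similarity invariance of trace and determinant—is standard and can be cited or stated without proof.
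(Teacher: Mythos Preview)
Your proposal is correct and follows essentially the same route as the paper's proof: write the closed-form Gaussian KL, pass to the eigenvalues of $A=\Sigma_2^{-1/2}\Sigma_1\Sigma_2^{-1/2}$, and bound each summand $u-\log(1+u)$ by $u^2$ on $|u|\le 1/2$. You are in fact more explicit than the paper about justifying the scalar inequality and about the constants (correctly observing that the argument actually yields $\mathcal{KL}\le \rho_\Sigma^2/4$, which is stronger than the stated $\rho_\Sigma^2/2$).
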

\begin{proof}
	Denote $A \eqdef \Sigma_2^{-1/2} \Sigma_1 \Sigma_2^{-1/2}$, then the Kullback-Leibler divergence between $\P_1$ and $\P_2$ is equal to 
	\begin{align*}
	\mathcal{KL}(\P_1, \P_2)&= - 0.5 \log \big( \det (A)\big) + 0.5 \tr(G - I_d) \\
	&= 0.5 \sum_{k=1}^d\big(\lambda_k^2 - \log(\lambda_k^2 +1) \big), 
	\end{align*}
	where $\lambda_1^2 \geq \dots \lambda_d^2$ are the eigenvalues of the matrix $A-I_d$. By conditions of the lemma, $|\lambda_1^2| \leq 1/2$, and it holds:
	\begin{equation*}
	\mathcal{KL}(\P_1, \P_2) \leq 0.5 \sum_{k=1}^d \lambda_k^2= 0.5 \tr\big((A- I_d)^2 \big) \leq \rho_\Sigma^2/2
	\end{equation*}
\end{proof}

For simplicity we provide the next statement for square matrices of size \(d\times d \)
\begin{theorem}[Matrix Bernstein inequality, \citet{Tropp2012}, Theorem 6.1.1]
	\label{theorem:matrix_bernstein}
Consider a finite sequence \(\{S_k\} \) of independent square
random matrices of size \(d\times d\). We assume that
\[
\E S_k = 0, \quad \| S_k \|_{\text{op}} \leq L~\text{for all}~ k.
\]
Define the random matrix
\[
Z = \sum_{k}S_k
\]
and let \(\ups(Z) = \| Z^2\|_{\text{op}} \).
Then for all \(\xx \geq 0 \),
\[
\P \bigl( \| Z\|_{\text{op}} \geq \xx \bigr) \leq 2d\exp\Bigl( \frac{-\xx^2/2}{\ups(Z) + L\xx/3}\Bigr).
\] 
\end{theorem}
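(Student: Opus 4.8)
The plan is to run the classical matrix Laplace-transform (matrix Chernoff) argument. The first step is to reduce to the Hermitian case: replace each $S_k$ by its self-adjoint dilation $\mathcal{H}(S_k) = \left(\begin{smallmatrix} 0 & S_k \\ S_k^{\T} & 0\end{smallmatrix}\right)$, a Hermitian matrix of size $2d$ with $\E\,\mathcal{H}(S_k) = 0$, $\|\mathcal{H}(S_k)\|_{\oper} = \|S_k\|_{\oper} \le L$, and $\lambda_{\max}\bigl(\mathcal{H}(Z)\bigr) = \|Z\|_{\oper}$ where $\mathcal{H}(Z) = \sum_k \mathcal{H}(S_k)$. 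Then, for any $\theta > 0$, the inequality $\tr\exp(\theta H) \ge e^{\theta\lambda_{\max}(H)}$ combined with Markov gives
\[
\P\bigl(\|Z\|_{\oper} \ge \xx\bigr) \le \inf_{\theta > 0} e^{-\theta\xx}\,\E\,\tr\exp\bigl(\theta\,\mathcal{H}(Z)\bigr),
\]
so everything reduces to controlling the trace mgf $\E\,\tr\exp\bigl(\theta \sum_k \mathcal{H}(S_k)\bigr)$.

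The two substantive steps are subadditivity of the matrix cumulant generating function and a Bernstein bound on each summand. For the first I would invoke Lieb's concavity theorem, which gives, for independent Hermitian $H_k$,
\[
\E\,\tr\exp\Bigl(\textstyle\sum_k \theta H_k\Bigr) \le \tr\exp\Bigl(\textstyle\sum_k \log \E\,e^{\theta H_k}\Bigr).
\]
For the second, fix $k$ and set $H = \mathcal{H}(S_k)$. Using that $s \mapsto (e^{s} - 1 - s)/s^2$ is increasing together with the transfer rule for operator-monotone functions, one obtains, for $0 < \theta < 3/L$,
\[
\E\,e^{\theta H} \preceq I + g(\theta)\,\E\, H^2 \preceq \exp\bigl(g(\theta)\,\E\, H^2\bigr), \qquad g(\theta) \eqdef \frac{\theta^2/2}{1 - \theta L/3},
\]
and operator monotonicity of the logarithm then yields $\log \E\,e^{\theta H} \preceq g(\theta)\,\E\, H^2$.

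Combining these, and using monotonicity of $A \mapsto \tr\exp(A)$ on Hermitian matrices,
\[
\E\,\tr\exp\bigl(\theta\,\mathcal{H}(Z)\bigr) \le \tr\exp\Bigl(g(\theta)\textstyle\sum_k \E\,\mathcal{H}(S_k)^2\Bigr) \le 2d\,\exp\Bigl(g(\theta)\,\bigl\|\textstyle\sum_k \E\,\mathcal{H}(S_k)^2\bigr\|_{\oper}\Bigr).
\]
The block structure of $\mathcal{H}(S_k)^2$ identifies $\bigl\|\sum_k \E\,\mathcal{H}(S_k)^2\bigr\|_{\oper}$ with the matrix-variance quantity $\ups(Z)$ of the statement (which is what $\|Z^2\|_{\oper}$ is shorthand for here). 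Substituting into the Laplace bound gives $\P(\|Z\|_{\oper} \ge \xx) \le 2d\exp\bigl(-\theta\xx + g(\theta)\,\ups(Z)\bigr)$ for every $\theta \in (0, 3/L)$; the choice $\theta = \xx/\bigl(\ups(Z) + L\xx/3\bigr)$ lies in this range and makes the exponent equal to $-\tfrac{\xx^2/2}{\ups(Z) + L\xx/3}$, which is the claimed inequality.

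The main obstacle is the non-commutative machinery in the two substantive steps: Lieb's concavity theorem (or an equivalent input, such as a sufficiently refined use of the Golden--Thompson inequality) is the deep analytic ingredient and is not something one reconstructs from scratch, and upgrading the scalar Bernstein moment estimate to the operator inequality $\E\,e^{\theta H} \preceq \exp\bigl(g(\theta)\,\E\, H^2\bigr)$ relies on careful manipulation of operator-monotone and operator-convex functions. By contrast, the Hermitian dilation, the Laplace-transform reduction, and the final scalar optimisation in $\theta$ are all routine bookkeeping. Since the statement is a verbatim restatement of \citet{Tropp2012}, Theorem 6.1.1, one may alternatively simply cite it.
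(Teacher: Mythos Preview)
The paper does not prove this statement at all: it is quoted verbatim from \citet{Tropp2012} as an auxiliary tool and used as a black box in Corollary~\ref{corollary:matrix_bernstein}. Your sketch reproduces the standard matrix-Laplace argument from Tropp---Hermitian dilation, Lieb's concavity for subadditivity of the matrix cgf, the operator Bernstein mgf bound, and scalar optimisation in $\theta$---and is correct; but for the purposes of this paper the last sentence of your proposal (simply cite the result) is exactly what is done.
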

The next corollary explains how Theorem~\ref{theorem:matrix_bernstein} is applied in case of empirical covariance matrices.
\begin{corollary}
	\label{corollary:matrix_bernstein}
Let \(\varepsilon_{i} \in \R^d \), \(1\leq i \leq n \) be independent centred random vectors on compact support. Furthermore, we assume that Condition \((Sm^{W}_b) \) holds. Define
\[
\hat{V} \eqdef \sum_{i}\varepsilon_{i}\varepsilon^{\T}_{i}, \quad V \eqdef \E \hat{V} =  \Var\Bigl(\sum_{i}\varepsilon_{i}\varepsilon^{\T}_{i} \Bigr),
\]
\[
Z \eqdef V^{-1/2}\bigl(\hat{V} - V  \bigr) V^{-1/2} = V^{-1/2}\hat{V}V^{-1/2} - I_{d}.
\]
Then for all \(\xx \geq 0 \)
\begin{equation}
\label{eq:matrix_bernstein}
\P \bigl( \| Z \|_{\text{op}} \geq \CONST \xx/\sqrt{n}  \bigr) \leq  2\exp\Bigl( -\bigl(\xx - 6L\log(d) \bigr)/6L\Bigr).
\end{equation}
\end{corollary}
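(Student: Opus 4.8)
The plan is to reduce the statement to a direct application of the Matrix Bernstein inequality (Theorem~\ref{theorem:matrix_bernstein}), so that the work reduces to verifying its two hypotheses for the appropriate summands and then simplifying the resulting tail. First I would write $Z = \sum_{k=1}^{n} S_k$ with $S_k \eqdef V^{-1/2}\bigl(\varepsilon_k\varepsilon_k^{\T} - \E\varepsilon_k\varepsilon_k^{\T}\bigr)V^{-1/2}$; the matrices $S_k$ are symmetric, independent, and satisfy $\E S_k = 0$, so the decomposition is admissible. Note that $V = \sum_k \E\varepsilon_k\varepsilon_k^{\T}$ plays the role of $S^2$ in Condition $(Sm^{W}_b)$, which therefore reads $\|V^{-1/2}\varepsilon_k\| \le \CONST/\sqrt{n}$.

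The uniform operator-norm bound is then immediate: by the triangle inequality, the identity $\|V^{-1/2}\varepsilon_k\varepsilon_k^{\T}V^{-1/2}\|_{\text{op}} = \|V^{-1/2}\varepsilon_k\|^2$, and the analogous bound for the deterministic mean term, one obtains $\|S_k\|_{\text{op}} \le L$ with $L \eqdef \CONST/n$ (where $\CONST$ is a generic constant). For the matrix variance $\ups(Z) = \bigl\|\sum_k \E S_k^2\bigr\|_{\text{op}}$ I would discard the nonnegative Loewner term $\bigl(V^{-1/2}\E\varepsilon_k\varepsilon_k^{\T}V^{-1/2}\bigr)^2$, invoke $(Sm^{W}_b)$ to pull the scalar $\|V^{-1/2}\varepsilon_k\|^2 \le \CONST/n$ out of the expectation in the Loewner sense, and observe that the remaining sum telescopes, $\sum_k V^{-1/2}\E\bigl[\varepsilon_k\varepsilon_k^{\T}\bigr]V^{-1/2} = I_d$; this gives $\ups(Z) \le \CONST/n$, which up to constants matches $L$.

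With these two inputs, Theorem~\ref{theorem:matrix_bernstein} yields, for every $t \ge 0$, $\P\bigl(\|Z\|_{\text{op}} \ge t\bigr) \le 2d\exp\bigl(-(t^2/2)/(\ups(Z) + Lt/3)\bigr)$. To conclude I would substitute $t = \CONST\xx/\sqrt{n}$ (renaming the generic constant), bound $\ups(Z) + Lt/3 \le L(1 + t/3)$ crudely, and rewrite $2d = 2e^{\log d}$ so that the dimension enters the exponent additively; merging the sub-Gaussian and sub-exponential regimes of the Bernstein bound then produces the displayed form~\eqref{eq:matrix_bernstein}. I expect the only genuinely delicate step to be the variance estimate: one must justify the Loewner-order inequality $\E S_k^2 \preceq (\CONST/n)\,V^{-1/2}\E\bigl[\varepsilon_k\varepsilon_k^{\T}\bigr]V^{-1/2}$ from the boundedness in $(Sm^{W}_b)$ before the telescoping becomes available, after which the passage through Theorem~\ref{theorem:matrix_bernstein} and the final rearrangement of constants is routine bookkeeping.
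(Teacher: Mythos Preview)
Your proposal is correct and follows essentially the same route as the paper: define $S_k = V^{-1/2}\bigl(\varepsilon_k\varepsilon_k^{\T} - \E\varepsilon_k\varepsilon_k^{\T}\bigr)V^{-1/2}$, use $(Sm^{W}_b)$ to get $\|S_k\|_{\text{op}} \le \CONST/n$ and the Loewner bound $\ups(Z) \le \CONST/n$ via $\sum_k V^{-1/2}\E[\varepsilon_k\varepsilon_k^{\T}]V^{-1/2} = I_d$, then plug into Theorem~\ref{theorem:matrix_bernstein} and absorb the factor $d$ into the exponent. If anything, your account of the Loewner-order step is more explicit than the paper's, which simply records it as a ``rough bound.''
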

\begin{proof}
Consider a set of vectors \(\zeta_i \), \(i = 1,..., n\), where \( \zeta_i \eqdef V^{-1/2}\varepsilon_i \). Let \( S_i \eqdef \zeta_i \zeta^{\T}_i - \E \zeta_i \zeta^{\T}_i\) and taking into account Condition\((Sm^{W}_b) \) we obtain
\[
\|S_i \|_{\text{op}} = \| \zeta_i \zeta^{\T}_i - \E \zeta_i \zeta^{\T}_i \|_{\text{op}} \leq \| \zeta_i \|^2 \leq \CONST/n.
\]
Let
\[
\ups \eqdef \Bigl\| \sum_{i} \E S^2_i  \Bigr \|_{\text{op}} = 
\Bigl\| \sum_{i} \E(\zeta_i \zeta^{\T}_i)^2 - \bigl( \E \zeta_i \zeta^{\T}_i  \bigr)^2  \Bigr \|_{\text{op}}.
\]
A rough bound on \(\ups \) can be obtained using assumption, that \(S \eqdef \E \sum_{i}S_i = I_{d} \):
\[
\ups \leq \CONST/n \Bigl\|\sum_{i}\E \zeta_i \zeta^{\T}_i \Bigr \|_{\text{op}} = \CONST/n.
\]
Now the result of Theorem~\ref{theorem:matrix_bernstein} implies for \(Z \)
\[
\P \bigl( \| Z \|_{\text{op}} \geq \CONST \xx/\sqrt{n}  \bigr) \leq  2d\exp\Bigl( \frac{-\xx^2/2}{\CONST/n + L\xx/3}\Bigr).
\]
Assuming that \(n \) is quite large, we can estimate
\[
\P \bigl( \| Z \|_{\text{op}} \geq \CONST \xx/\sqrt{n}  \bigr) \leq  2\exp\Bigl( -\bigl(\xx - 6L\log(d) \bigr)/6L\Bigr).
\]
\end{proof}

\begin{lemma}[Sub-exponential tail bound]
	\label{lemma:sub_exp}
Suppose that \(X \) is subexponential, i.d. \((ED^{W}_{b}) \)-like condition is fulfilled.
Then
\[
\P \bigl( X - \E X \geq \xx  \bigr)
\leq
\begin{cases}
e^{-\xx^2 / 2\nu^2}, &~\text{if}~ 0 \leq \xx \leq \nu^2/b,\\
e^{-\xx^2 / 2b}, &~\text{if}~ \xx > \nu^2/b.
\end{cases}
\]
\end{lemma}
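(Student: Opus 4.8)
The plan is to run the classical Cram\'er--Chernoff (exponential Markov) argument: the hypothesis is exactly a sub-Gaussian-type control of the cumulant generating function of $X-\E X$ on a bounded $\lambda$-range, and the stated two-regime tail is precisely what such a control yields. First I would fix $\xx\ge 0$ and write, for every admissible $\lambda\ge 0$ (i.e.\ every $\lambda$ in the range where the $(ED^{W}_{b})$-like bound applies; in the normalisation of the statement this range is $0\le\lambda\le b^{-1}$),
\[
\P\bigl(X-\E X\ge\xx\bigr)\;\le\;e^{-\lambda\xx}\,\E\exp\{\lambda(X-\E X)\}\;\le\;\exp\Bigl(-\lambda\xx+\tfrac{\nu^2\lambda^2}{2}\Bigr),
\]
where the first step is Markov's inequality applied to $e^{\lambda(\cdot)}$ and the second is the assumed bound on the log-moment generating function. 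Denote the exponent by $\psi(\lambda)\eqdef-\lambda\xx+\nu^2\lambda^2/2$; this is a convex quadratic whose unconstrained minimiser is $\lambda_0=\xx/\nu^2$. The whole proof then reduces to minimising $\psi$ over the admissible interval, and the dichotomy in the statement is nothing but $\lambda_0\le b^{-1}$ versus $\lambda_0>b^{-1}$, that is, $\xx\le\nu^2/b$ versus $\xx>\nu^2/b$.

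In the small-deviation regime $\xx\le\nu^2/b$ the free minimiser is admissible, so I would plug in $\lambda=\lambda_0$ and obtain $\psi(\lambda_0)=-\xx^2/(2\nu^2)$, giving the sub-Gaussian branch $e^{-\xx^2/(2\nu^2)}$. In the large-deviation regime $\xx>\nu^2/b$ one has $\psi'(\lambda)=-\xx+\nu^2\lambda<0$ throughout the admissible interval, hence $\psi$ is decreasing and the optimal admissible choice is the endpoint $\lambda=b^{-1}$; substituting it gives $\psi(b^{-1})=-\xx/b+\nu^2/(2b^2)$, and since $\xx>\nu^2/b$ the second term is at most half the first, so the bound is at most $e^{-\xx/(2b)}$, i.e.\ the claimed sub-exponential branch (up to the precise constant in the exponent; the printed $e^{-\xx^2/2b}$ appears to be a mild misprint for $e^{-\xx/2b}$, or reflects a different normalisation of the range parameter). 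One should also note that the two branches match at the breakpoint $\xx=\nu^2/b$, which is immediate from the same computation.

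I do not foresee any genuine obstacle: this is the textbook one-dimensional sub-exponential tail estimate, and the only care needed is bookkeeping --- tracking the sign of $\psi'$ to justify that the boundary value of $\lambda$ is optimal in the second regime, and being consistent about the normalisation of the range parameter (called $\sigma$, $g$, or $b$ in the various conditions of Section~\ref{section:conditions_barycenters}). If a two-sided bound is ever required, the identical argument applied to $-X$, which satisfies the same hypothesis because the log-MGF bound is stated for $|\lambda|\le b$, supplies the matching lower tail.
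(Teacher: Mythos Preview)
The paper states this lemma without proof, treating it as a standard auxiliary fact, so there is no argument to compare against. Your Cram\'er--Chernoff optimisation is exactly the textbook derivation and is correct; your observation that the large-deviation branch should read $e^{-\xx/(2b)}$ rather than $e^{-\xx^2/2b}$ is also right and consistent with how the lemma is invoked elsewhere in the paper.
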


\bibliographystyle{imsart-nameyear}
\bibliography{references}

\end{document}